\documentclass[11pt]{article}
\usepackage[utf8]{inputenc}
\usepackage{array}
\usepackage{amsmath, amsthm}
\usepackage{amsfonts, dsfont}
\usepackage{xcolor}
\usepackage{appendix}
\usepackage[margin=2.5cm]{geometry} 
\usepackage{enumitem}
\usepackage{graphicx}
\usepackage{bbm}
\usepackage{mathtools}
\usepackage{float}
\usepackage[colorlinks=true, allcolors=blue]{hyperref}
\usepackage{doi} 

\mathtoolsset{showonlyrefs}

\title{Splitting methods for stochastic Hodgkin-Huxley type systems\\ and a localized fundamental mean-square convergence theorem}
\author{Pierre Étoré\footnote{pierre.etore@univ-grenoble-alpes.fr, Univ. Grenoble Alpes, CNRS, Grenoble INP (Institute of Engineering Univ. Grenoble Alpes), LJK, 38000 Grenoble, France}, Anna Melnykova\footnote{anna.melnykova@univ-avignon.fr, Université d’Avignon, Laboratoire de Mathématiques d’Avignon, UPR 2151, Avignon, 84000, France}, Irene Tubikanec\footnote{irene.tubikanec@jku.at,
       Institute of Applied Statistics,
       Johannes Kepler University Linz, 4040 Linz, Austria}}
\date{}

\newcommand{\balpha}{\boldsymbol\alpha}

\newcommand{\bbeta}{\boldsymbol\beta}

\newcommand{\bsigma}{\boldsymbol\sigma}

\newcommand\norm[1]{\left\lVert#1\right\rVert}

\newcommand{\1}{\mathds{1}}

\newcommand{\E}{\mathbb{E}}

\renewcommand{\P}{\mathbb{P}}

\newcommand{\R}{\mathbb{R}}

\newcommand{\cF}{\mathcal{F}}
\newcommand{\cG}{\mathcal{G}}

\newcommand{\ds}{\displaystyle}

\newtheorem{remark}{Remark}[section]
\newtheorem{lemma}{Lemma}[section]
\newtheorem{proposition}{Proposition}[section]
\newtheorem{theorem}{Theorem}[section]
\newtheorem{definition}{Definition}[section]

\usepackage[comma]{natbib}
\begin{document}

\maketitle

\begin{abstract}

Existing fundamental theorems for mean-square convergence of numerical methods for stochastic differential equations (SDEs) require globally or one-sided Lipschitz continuous coefficients, while strong convergence results under merely local Lipschitz conditions are largely restricted to Euler-Maruyama type methods. To address these limitations, we introduce a novel localized version of the fundamental mean-square convergence theorem for SDEs with locally Lipschitz coefficients, which naturally arise in a wide range of applications. Specifically, we show that if a numerical scheme is locally consistent in the mean-square sense of order $q>1$, then it is locally mean-square convergent with rate $q-1$. Building on this result, we further prove that global mean-square convergence follows, provided that both the exact solution and its numerical approximation admit bounded $2p$th moments for some $p>1$. 

These new convergence results are illustrated on a class of locally Lipschitz SDEs of Hodgkin-Huxley type, characterized by a conditionally linear drift structure. For these systems, we construct different Lie-Trotter and Strang splitting methods exploiting their conditional linearity. The proposed convergence framework is then applied to these schemes, requiring innovative proofs of local consistency and boundedness of moments. In addition, we establish key structure-preserving properties of the splitting methods, in particular state-space preservation and geometric ergodicity. Numerical experiments support the theoretical results and demonstrate that the proposed splitting schemes significantly outperform Euler-Maruyama type methods in preserving the qualitative features of the model.

\end{abstract}

\vspace{0.4cm}
\noindent\textbf{Keywords} Hodgkin-Huxley model, Stochastic differential equations, Local Lipschitz condition, Splitting methods, Mean-square convergence, Structure preservation

\vspace{0.4cm}
\noindent\textbf{AMS subject classifications} 60H10, 60H35, 65C20, 65C30

\section{Introduction}\label{section:intro}

Stochastic differential equations (SDEs) with non-globally Lipschitz coefficients arise naturally as models in many applications, including mathematical biology, neuroscience, finance, and physics. In this paper, we study a class of $(d+1)$-dimensional SDEs of the form
\begin{align}\label{eq:HH_redefined}
    \begin{split}
        dV_t &=  \left(a\bigl(U_t\bigr)V_t + b\bigl( U_t \bigr)\right)dt + d\zeta_t, \\
        dU_t &=  \left(-\textbf{diag}\left( \balpha\bigl( V_t \bigr) + \bbeta\bigl( V_t \bigr) \right) U_t +\balpha \bigl( V_t \bigr)\right)dt, 
    \end{split}  
\end{align}
where $\textbf{diag}(x)$ denotes the diagonal matrix with diagonal entries given by the components of the vector $x$, $a:\R^d\to (-\infty,0)$, $b:\R^d\to\R$ and $\balpha,\bbeta:\R\to(0,\infty)^d$ are $C^1$ functions, and $d\zeta_t$ represents suitable stochastic perturbation terms. The drift coefficient of System \eqref{eq:HH_redefined} is neither globally nor one-sided Lipschitz continuous. However, it satisfies a \textit{local Lipschitz condition} and exhibits a characteristic \textit{conditional linear structure}, meaning that the drift is linear in each component when the respective other variables are held fixed.

A prominent motivating example for this class of SDEs is the Hodgkin–Huxley model, which is one of the best-known and most widely used mathematical models for describing neuronal activity. This model was first introduced by \cite{Hodgkin1952} as a $4$-dimensional ordinary differential equation (ODE), describing the dynamics of a giant squid neuron. In this framework, a membrane potential variable $V$ is coupled with $d=3$ gating variables $U=(n,m,h)^T$, which represent the ion channel subunits.

To account for channel fluctuations and intrinsic variability, several stochastic extensions of the Hodgkin-Huxley model have been proposed and investigated; see, e.g., \cite{Baladron2012,Bossy2015,Hoepfner2016, Hoepfner2016b}. In particular, conductance noise models, in which stochastic perturbations act only on the membrane potential variable $V$, have been shown to provide a more accurate reproduction of neuronal dynamics than models in which noise acts on the gating variables; see \cite{Goldwyn2011,Goldwyn2011b} for comparison studies, and \cite{Fox1994,Fox1997} for examples of the latter class. Following these findings, we consider two conductance noise variants of System \eqref{eq:HH_redefined}: (i) a formulation in which $\zeta_t$ is modelled as an Ornstein-Uhlenbeck process, leading to additive noise at the cost of an increased system dimension (see, e.g., \cite{Hoepfner2016}), and (ii) multiplicative noise extensions that retain the original system dimension.

Closed-form solutions of System \eqref{eq:HH_redefined} are generally unavailable, making numerical methods indispensable. In this work, we address the main challenges associated with the numerical approximation of \eqref{eq:HH_redefined}, namely the construction of suitable schemes, their structure preservation, and their strong (mean-square) convergence.

The numerical approach adopted in this paper is based on splitting methods, which decompose the SDE of interest into exactly solvable subsystems, derive their exact solutions, and combine them in an appropriate manner. We refer to \cite{Mclachlan2002} and \cite{Blanes2009} for comprehensive discussions of splitting methods for ODEs, and to \cite{Misawa2001,Shardlow2003,Leimkuhler2015,Ableidinger2016,Alamo2016,Brehier2018,Chevallier2020,Buckwar2022,KellyLord2023,Foster2024,Ditlevsen2025,JansLord2025} for selected extensions to~SDEs. 

Although the stochastic Hodgkin-Huxley system is extensively studied in the literature and widely used in computational neuroscience, to the best of our knowledge, structure preserving numerical methods for this setting have not yet been investigated. Their development is nontrivial due to both probabilistic and numerical challenges arising from the local Lipschitz framework. For the deterministic Hodgkin-Huxley system, structure-preserving numerical splitting methods that exploit the system's conditional linearity have been proposed by \cite{Chen2020}. Building on this approach, we extend the methodology to stochastic Hodgkin-Huxley type systems \eqref{eq:HH_redefined}. Specifically, for both stochastic variants considered here, we construct numerical splitting schemes based on Lie-Trotter and Strang compositions.

In contrast to Euler-Maruyama type methods, the proposed splitting schemes are inherently structure-preserving. In particular, we prove that they are geometrically ergodic and preserve the natural state space of the model, ensuring that the gating variables $U$ remain within $[0,1]^d$. Numerical experiments demonstrate that the splitting schemes are markedly superior in preserving the qualitative dynamics of neuronal spiking and allow for significantly larger time steps, resulting in substantial computational savings, a feature of particular  relevance in computational neuroscience. 

While structure preservation follows naturally from the construction of the splitting methods, establishing mean-square convergence is considerably more challenging. Classical strong convergence results for numerical methods rely on the fundamental theorem of mean-square convergence \citep{Milstein2004}, which assumes globally Lipschitz coefficients, or on its extension to the one-sided Lipschitz case \citep{Tretyakov2013}. A key advantage of the fundamental theorem is that it reduces the problem of strong convergence to controlling the one-step error, a property commonly referred to as \textit{consistency}. However, neither of these theorems applies to the class of locally Lipschitz SDEs considered here. An influential contribution to the locally Lipschitz setting was provided by \cite{Higham2002}, who established the convergence of the Euler-Maruyama method under the local Lipschitz condition, assuming that the $2p$th moments of both the exact solution and the numerical approximation are bounded for some $p>1$. This result, however, does not provide  information on the rate of convergence, and its proof cannot be directly extended to splitting methods, whose structure is fundamentally different from that of Euler-Maruyama type methods.

Motivated by these limitations, we propose a novel localized version of the fundamental theorem of mean-square convergence. Specifically, we show that if the coefficients of an SDE are locally Lipschitz and a numerical scheme for that SDE is \textit{locally consistent} of order $q>1$, then the scheme is locally mean-square convergent with rate $q-1$. Building on this result, we adapt the ideas of \cite{Higham2002} to show that, if in addition the exact solution and the numerical scheme admit bounded $2p$th moments for some $p>1$, global mean-square convergence follows. While this theorem does not provide a global convergence rate, the local rate $q-1$ gives an indication of the actual speed of convergence. Indeed, this rate matches the empirical rate observed in our numerical experiments on stochastic Hodgkin-Huxley systems, where we have $q-1=1/2$. 

Importantly, the proposed convergence result is not limited to conditionally linear stochastic systems of the form \eqref{eq:HH_redefined}. It also applies to general SDEs with locally Lipschitz coefficients and can be used for convergence analysis of a broad class of numerical methods beyond the setting considered~here. 

In the context of splitting methods, the application of splitting techniques to conditionally linear SDEs has recently begun to attract attention (see, e.g., \cite{Souli2023,Jovanovski2025}). Existing works, however, either rely on classical fundamental theorems or do not provide convergence analyses. Moreover, the splitting methods proposed for deterministic Hodgkin-Huxley type systems \citep{Chen2020} are analysed in terms of limit cycle preservation but not accompanied by any convergence investigations. Since our framework covers the deterministic setting, convergence in that case follows as a direct consequence of our analysis. Finally, classical fundamental convergence theorems have been extensively used recently in the literature to establish convergence of splitting methods, see, e.g. \cite{Ableidinger2017,Chevallier2020,Tubikanec2020} for globally Lipschitz systems and \cite{Buckwar2022,Souli2023,Pilipovic2024} for extensions to the one-sided Lipschitz setting. These aspects further highlight the relevance of the present work for the convergence analysis of splitting methods for locally Lipschitz systems and, more broadly, for general numerical methods applied to such problems.

The contributions of this work are presented as follows:
\begin{itemize}
\item In Section~\ref{section:model}, we introduce the two considered classes of stochastic conditionally linear systems of type \eqref{eq:HH_redefined} and recall the Hodgkin-Huxley model as a motivating example. We discuss existence and uniqueness of solutions, and prove that they have bounded moments. 
\item In Section~\ref{section:splitting_schemes}, we construct several Lie-Trotter and Strang splitting methods for the numerical approximation of these two classes of SDEs, exploiting their conditional linear structure.
\item In Section~\ref{sec:fund-thm}, we develop a novel localized fundamental theorem of mean-square convergence, together with a corresponding global mean-square convergence result, which are of general applicability.
\item In Section~\ref{sec:Splitting_properties}, we apply the new  results of Section~\ref{sec:fund-thm} to the proposed splitting methods, including innovative proofs of local consistency (of order $q=3/2$) and boundedness of moments, through continuous-time extensions of the schemes. In addition, we establish structure-preservation properties of the splitting methods, especially state-space preservation and geometric ergodicity. 
\item In Section~\ref{sec:num_exp}, we provide an extensive numerical study illustrating the theoretical findings, including a comparison with various state-of-the-art Euler-Maruyama type methods. 
\end{itemize}
The paper concludes with Section~\ref{sec:conclusion}, where we discuss possible applications and extensions. 

\paragraph*{Notation.}

Throughout, the following notations are used. By $0_d$ we denote the $d$-dimensional vector of zeros, and by $\mathbb{I}_d$ the $d\times d$-dimensional identity matrix. For a vector $x \in \mathbb{R}^d$, \textbf{diag}$(x)$ denotes the diagonal matrix whose diagonal entries are given by the corresponding elements of~$x$. For $s,t \in \mathbb{R}$, $s \wedge t$ and $s \vee t$ denote the minimum and maximum of $s$ and $t$, respectively. Finally,  the symbol $|\cdot|$ denotes the Euclidean vector norm or associated matrix norm, depending on the~context.

\section{Model}\label{section:model}

In this section, we present the class of SDEs under consideration. In Section \ref{sec:model_description}, two variants of System \eqref{eq:HH_redefined}, with different stochastic perturbation terms $d\zeta_t$, are introduced. In Section \ref{ssec:existence-moments}, we prove that they have a unique solution with bounded moments. 

\subsection{Description, first assumptions and notations}\label{sec:model_description}

Throughout, we suppose that $a:\R^d\to (-\infty,0)$, 
$b:\R^d\to\R$ and $\balpha,\bbeta:\R\to(0,\infty)^d$ are $C^1$ functions.
It will be convenient to use a component-wise notation for $\balpha$ and $\bbeta$, defining
\begin{align*}
    \balpha(v) &:=\left( \alpha_{1}(v),\ldots, \alpha_{d}(v) \right)^T, \quad \bbeta(v):=\left( \beta_{1}(v),\ldots, \beta_{d}(v) \right)^T,\quad v\in\R. 
\end{align*}
Besides a time horizon $0<T<\infty$ is given, alongside with a probability space $(\Omega,\mathcal{F},\mathbb{P})$ 
on which a standard Brownian motion  $(W_t)_{t \in [0,T]}$ is defined. 

We now introduce two different variants of System \eqref{eq:HH_redefined}. 

\vspace{0.3cm}
\noindent
\textbf{[BM]} Brownian motion-driven SDE: We set
$d\zeta_t:=\Sigma(U_t)dW_t$,
where it is assumed that $\Sigma:\R^d\to\R$ is of class $C^1$. Specifically, we examine the $(d+1)$-dimensional SDE
\begin{align}
\tag*{\eqref{eq:HH_redefined}\textbf{[BM]}}
\begin{split}
        dV_t &=  \left(a\bigl(U_t\bigr)V_t + b\bigl( U_t \bigr)\right)dt + \Sigma(U_t)dW_t, \quad 0\leq t\leq T, \\
        dU_t &=  \left(-\textbf{diag}\left( \balpha\bigl( V_t \bigr) + \bbeta\bigl( V_t \bigr) \right) U_t +\balpha \bigl( V_t \bigr)\right)dt, \quad 0\leq t\leq T,\\
        \end{split}
\end{align}
where the processes $V$ and $U:=(U^{1},\ldots,U^{d})^T$ are respectively one and  $d$-dimensional. We denote by $X=(V,U^T)^T$ the possible $(d+1)$-dimensional solution of \eqref{eq:HH_redefined}\textbf{[BM]}.

A real-valued random variable $V_0$ with $\mathbb{E}\big[ \left| V_0 \right|^{2p} \big]<\infty$ for all $p\geq 1$ and a $[0,1]^d$-valued random variable~$U_0$ are given. They are supposed to be independent of $W$ and will represent the starting points at time $t=0$ of $V$ and $U$ respectively.

We consider the natural filtration $(\cF^W_t)_{t\in[0,T]}$ of $W$, the filtration
$\cG_t=\cF^W_t\vee\sigma(V_0)\vee\sigma(U_0)$, for~$0\leq t\leq T$, and finally augment $(\cG_t)_{t \in [0,T]}$ with the null sets for $\P$. This gives the filtration $(\cF_t)_{t\in[0,T]}$, that satisfies the usual conditions (and to which $W$ is still adapted). 

\vspace{0.3cm}
\noindent
\textbf{[OU]} Ornstein-Uhlenbeck-driven SDE: We set $d\zeta_t:=dZ_t$, where $(Z_t)_{t\in[0,T]}$ is an Ornstein-Uhlenbeck process following the dynamics
\begin{equation}\label{eq:OU}
    dZ_t = \theta(\mu - Z_t)dt + \sigma dW_t,
\end{equation}
with $\theta,\sigma>0$, $\mu \in \mathbb{R}$. 

Specifically, plugging \eqref{eq:OU} into \eqref{eq:HH_redefined} gives the $(d+2)$-dimensional SDE 
\begin{align}
    \tag*{\eqref{eq:HH_redefined}\textbf{[OU]}}
    \begin{split}
        dV_t &=  \left(a\bigl(U_t\bigr)V_t + b\bigl( U_t \bigr) + \theta(\mu-Z_t) \right)dt + \sigma dW_t, \quad 0\leq t\leq T,\\
        dU_t &=  \left(-\textbf{diag}\left( \balpha\bigl( V_t \bigr) + \bbeta\bigl( V_t \bigr) \right) U_t +\balpha \bigl( V_t \bigr)\right)dt, \quad 0\leq t\leq T,\\
        dZ_t &=\theta(\mu-Z_t) dt + \sigma dW_t,\quad 0\leq t\leq T.
    \end{split}  
\end{align}
We denote by $X=(V,U^T,Z)^T$ the possible $(d+2)$-dimensional solution of \eqref{eq:HH_redefined}\textbf{[OU]}. 

Real-valued random variables $V_0$ and $Z_0$ with 
$\mathbb{E}\big[ \left| V_0 \right|^{2p} \big]<\infty$ and 
$\mathbb{E}\big[ \left| Z_0 \right|^{2p} \big]<\infty$
for all $p\geq 1$ and a $[0,1]^d$-valued random variable~$U_0$ are given. They are supposed to be independent of $W$ and will represent the starting points at time $t=0$ of $V$, $Z$ and $U$ respectively.

Similar to the previous case (using this time $\sigma(V_0)\vee\sigma(U_0)\vee \sigma(Z_0)$
instead of $\sigma(V_0)\vee\sigma(U_0)$), we define the filtration
 $(\cF_t)_{t\in[0,T]}$  satisfying the usual conditions. 

\begin{remark}
Note that, for example, \cite{Hoepfner2012} investigate a setting similar to System~\eqref{eq:HH_redefined}\textbf{[OU]}. To extend the system and its analysis to cases with multiplicative noise, while retaining a convenient representation and notation, we additionally introduce System~\eqref{eq:HH_redefined}\textbf{[BM]}.
\end{remark}

\subsection{Existence and uniqueness of solutions and moment bounds}\label{ssec:existence-moments}

The aim of this section is to prove that Systems \eqref{eq:HH_redefined}\textbf{[BM]} and
\eqref{eq:HH_redefined}\textbf{[OU]} each admit a unique strong solution with bounded moments. To the best of our knowledge, this result is new in the \textbf{[BM]}-case. In the \textbf{[OU]}-case, the existence result was given in \cite{Hoepfner2012}, but the boundedness of moments is a novel contribution. We first detail the proof in the \textbf{[BM]}-case, and then explain how to adapt the arguments to the \textbf{[OU]}-case.

\paragraph*{The \textbf{[BM]}-case.}

Note that System \eqref{eq:HH_redefined}$\textbf{[BM]}$ may be rewritten in the equivalent form
\begin{equation}
\label{eq:HH_Xfashion}
dX_t=\mathbf{b}(X_t)dt+ \bsigma(X_t)dW_t, \quad 0\leq t\leq T, 
\end{equation}
where $X_t=(V_t,U_t^T)^T$ and the coefficients $\mathbf{b},\bsigma$ are defined by 
\begin{equation}
\label{eq:def-coeffXfashion}
\mathbf{b}(x)=\left(
 \begin{array}{c}
 a(u)v+b(u)\\
 -\textbf{diag}\left( \balpha\bigl( v \bigr) + \bbeta\bigl( v \bigr) \right) u +\balpha \bigl( v \bigr)\\
 \end{array}
 \right)
,\quad
 \bsigma(x)=\left(
 \begin{array}{c}
 \Sigma(u)\\
 0\\
 \end{array}
 \right),
\end{equation}
for all $x=(v,u^T)^T\in\R\times[0,1]^d$. Notice that the coefficients defined in \eqref{eq:def-coeffXfashion} are locally Lipschitz thanks to our assumptions on $\Sigma$, $a$, $b$, and $\alpha$, $\beta$ (cf. Section \ref{sec:model_description}).
Therefore, it is known (see, e.g., \cite{revuz1999} Exercise IX.2.10-1)) that a strong solution~$X$ to \eqref{eq:HH_Xfashion} (equiv. to~\eqref{eq:HH_redefined}\textbf{[BM]}) exists up to the explosion~time
$$
\tau=\inf\{0\leq t\leq T: \,|X_t|=\infty\}.
$$
We thus aim at proving that $\P(\tau=\infty)=1$, and start with the following lemma.

\begin{lemma}[\cite{Hoepfner2012}, Subsection 4.4]\label{lem:Uin01}
For any $\omega\in\Omega$ and any $s<\tau(\omega)$ one has that~$U_t(\omega)\in[0,1]^d$, $0\leq t\leq s$.
\end{lemma}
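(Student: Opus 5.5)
The plan is to work pathwise. For fixed $\omega$ and $s<\tau(\omega)$, the path $t\mapsto V_t(\omega)$ is continuous on $[0,s]$, hence bounded there. Each gating component then satisfies, on $[0,s]$, a scalar linear ODE with continuous time-dependent coefficients (there is no noise in the $U$-equation):
\begin{equation}
\frac{d}{dt}U^i_t=-\bigl(\alpha_i(V_t)+\beta_i(V_t)\bigr)U^i_t+\alpha_i(V_t),\qquad i=1,\ldots,d.
\end{equation}
I would solve it explicitly by variation of constants. Setting $\lambda_i(t):=\alpha_i(V_t)+\beta_i(V_t)>0$ and $\Lambda_i(t,r):=\int_r^t\lambda_i(\rho)\,d\rho$, we get
\begin{equation}
U^i_t=e^{-\Lambda_i(t,0)}U^i_0+\int_0^t e^{-\Lambda_i(t,r)}\alpha_i(V_r)\,dr .
\end{equation}
Since $\alpha_i>0$ and $U^i_0\ge 0$, both terms are nonnegative, so $U^i_t\ge 0$.

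For the upper bound, I would look at $1-U^i_t$. It solves
\begin{equation}
\frac{d}{dt}(1-U^i_t)=-\lambda_i(t)(1-U^i_t)+\beta_i(V_t).
\end{equation}
This has the same structure, now with the positive source $\beta_i$ and the initial value $1-U^i_0\ge 0$. The same formula gives $1-U^i_t\ge0$.

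Alternatively, the upper bound can be read off the first formula. Write $\alpha_i=\lambda_i\cdot\frac{\alpha_i}{\alpha_i+\beta_i}\le\lambda_i$ and use $\int_0^t e^{-\Lambda_i(t,r)}\lambda_i(r)\,dr=1-e^{-\Lambda_i(t,0)}$. This yields
\begin{equation}
U^i_t\le e^{-\Lambda_i(t,0)}U^i_0+1-e^{-\Lambda_i(t,0)}\le 1 .
\end{equation}

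The only subtle step is justifying the pathwise ODE treatment. One must check that, for $t\le s<\tau(\omega)$, the integral equation defining $U$ holds as an ordinary Lebesgue integral identity along the continuous path. One also needs the coefficients $t\mapsto\lambda_i(t)$ and $t\mapsto\alpha_i(V_t)$ to be continuous, hence integrable, on $[0,s]$; this follows from the continuity of $\alpha_i,\beta_i$ and of $V$ up to $s$. Uniqueness of solutions to linear ODEs with continuous coefficients then identifies $U^i$ with the explicit formula.

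A caveat concerns $\Omega$ versus a.e. $\omega$: strictly, the statement holds for every $\omega$ in the full-measure set where the solution is defined with continuous paths. I would note this, or take such a continuous version from the outset.
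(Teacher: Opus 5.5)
Your proof is correct and matches the paper's argument: a pathwise variation-of-constants formula, nonnegativity from $\alpha_l>0$ and $U_0^l\ge 0$, and an upper bound obtained exactly as in your ``alternative'' route, by writing $\alpha_l=\frac{\alpha_l}{\alpha_l+\beta_l}(\alpha_l+\beta_l)\le\alpha_l+\beta_l$ and integrating to get $U_t^l\le 1+(u_0^l-1)e^{-\int_0^t(\alpha_l(V_r)+\beta_l(V_r))dr}\le 1$. Your primary route via the linear ODE for $1-U^i$ (with source $\beta_i>0$) is an equivalent, symmetric repackaging of the same computation.
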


\begin{proof}
The proof of \cite{Hoepfner2012} is provided in the Appendix for completeness.
\end{proof}

Once we know that the $U$-component of $X$ stays in the hypercube $[0,1]^d$ the situation regarding the $V$-component essentially boils down to the one with bounded diffusion coefficient and affine drift term. One can then expect that the moments of $V$ are bounded.

However, one has to be cautious because the fact that $U$ stays in $[0,1]^d$ is only guaranteed before explosion time $\tau$. In order to establish non explosion we thus introduce the stopping times $S_k=\inf\{ t\geq 0:\,|X_t|>k \}$  (note that $S_k<\tau$ for any $k$) and prove the following~lemma. 

\vspace{0.2cm}

\begin{lemma}\label{lem:bounded_moments_true_BM}
    Let $X_t=(V_t,U_t^T)^T$, $t \in [0,T]$, be the solution of System \eqref{eq:HH_redefined}\textbf{[BM]} up to explosion time $\tau$. Then, for any $p\geq 1$, it holds that 
    \begin{equation}
    \label{eq:control-psik}
        \E\left[\,\max_{0\leq s\leq t\wedge S_k}|V_s|^{2p} \,\right] \leq C\left( 1+ \mathbb{E}\left[\left|V_0\right|^{2p}\right] \right), \quad 0\leq t\leq T,
    \end{equation}
     where the constant $C>0$ depends on $p$, $T$, $\mathbf{b}$ and $\bsigma$. 
\end{lemma}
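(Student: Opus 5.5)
The plan is to exploit Lemma~\ref{lem:Uin01}. Before $S_k$ (hence before $\tau$), $U_s\in[0,1]^d$. Since $a,b,\Sigma$ are continuous on the compact set $[0,1]^d$, the constants
\[
A:=\max_{u\in[0,1]^d}|a(u)|,\qquad B:=\max_{u\in[0,1]^d}|b(u)|,\qquad S:=\max_{u\in[0,1]^d}|\Sigma(u)|
\]
are finite. On $[0,t\wedge S_k]$ the $V$-equation therefore has a drift bounded in absolute value by $A|V_s|+B$ and a diffusion coefficient bounded by $S$. I would write the stopped integral form
\[
V_{s\wedge S_k}=V_0+\int_0^{s\wedge S_k}\bigl(a(U_r)V_r+b(U_r)\bigr)dr+\int_0^{s\wedge S_k}\Sigma(U_r)\,dW_r ,
\]
and set $\psi_k(t):=\E\bigl[\max_{0\le s\le t\wedge S_k}|V_s|^{2p}\bigr]$. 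This quantity is finite because $|V_s|\le k$ for $s\le S_k$, at least on $\{S_k>0\}$. If $|X_0|>k$, the maximum is just $|V_0|^{2p}$, which is controlled by $\E|V_0|^{2p}$, so this event is harmless.

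First, I use the elementary inequality $|x+y+z|^{2p}\le 3^{2p-1}(|x|^{2p}+|y|^{2p}+|z|^{2p})$ and take the maximum over $s\le t\wedge S_k$. For the drift term, Hölder's inequality in time gives
\[
\Bigl|\int_0^{s\wedge S_k}(\dots)\,dr\Bigr|^{2p}\le T^{2p-1}\int_0^{t\wedge S_k}\bigl(A|V_r|+B\bigr)^{2p}dr\le C\int_0^{t}\Bigl(1+\max_{0\le u\le r\wedge S_k}|V_u|^{2p}\Bigr)dr .
\]
For the stochastic integral, I apply the Burkholder--Davis--Gundy inequality to the stopped martingale $\int_0^{\cdot\wedge S_k}\Sigma(U_r)dW_r$. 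This gives
\[
\E\Bigl[\max_{s\le t}\Bigl|\int_0^{s\wedge S_k}\Sigma(U_r)dW_r\Bigr|^{2p}\Bigr]\le C_p\,\E\Bigl[\Bigl(\int_0^{t\wedge S_k}\Sigma(U_r)^2dr\Bigr)^{p}\Bigr]\le C_pS^{2p}T^p .
\]
Because $\Sigma$ is bounded on the cube, this term is a pure constant and requires no Grönwall step at all.

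Combining the pieces yields
\[
\psi_k(t)\le C\bigl(1+\E|V_0|^{2p}\bigr)+C\int_0^t\psi_k(r)\,dr .
\]
Here $C$ depends only on $p,T,A,B,S$, and not on $k$. Since $\psi_k$ is bounded by $\max(k^{2p},\E|V_0|^{2p})$, Grönwall's lemma applies and gives \eqref{eq:control-psik} with $C$ replaced by $Ce^{CT}$.

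The main obstacle is the rigor of using Lemma~\ref{lem:Uin01} inside the stopped integrals. One must check that for $r<t\wedge S_k\le S_k<\tau$ we indeed have $U_r\in[0,1]^d$, which holds pathwise by the lemma, so the bounds $A,B,S$ apply inside every integrand. The same care is needed to ensure that the maximum over $s\le t\wedge S_k$ is bounded by the integrals up to $t\wedge S_k$, with the inner maxima monotone in $r$. Measurability and integrability of $\psi_k$ must also be checked so that Grönwall is legitimate. The stopping at $S_k$ handles the latter point.
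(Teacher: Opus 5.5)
Your proof is correct and follows essentially the same route as the paper's Appendix proof: you bound $a,b,\Sigma$ on the cube via Lemma~\ref{lem:Uin01}, use H\"older for the drift and Burkholder--Davis--Gundy for the stochastic integral up to the stopping time $t\wedge S_k$, and conclude with Fubini and Gr\"onwall, which is legitimate because the stopped moment is finite. Your remark on the event $\{|X_0|>k\}$ slightly refines the paper's finiteness claim: there $S_k=0$ and the stopped maximum equals $|V_0|^{2p}$, which need not be bounded by $k^{2p}$.
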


\begin{proof}
The proof follows standard arguments, such as Grönwall’s inequality, and is postponed to the Appendix.
\end{proof}

We are now in position to prove the following theorem.

\begin{theorem}
\label{thm:bounded_moments_true_BM} It holds that $\P(\tau=\infty)=1$, so that a unique strong solution $X=(V,U^T)^T$ to System  \eqref{eq:HH_redefined}\textbf{[BM]} exists on the time interval $[0,T]$.
Besides, for this strong solution one has
\begin{equation}
\label{eq:Uin01-BM}
U_t\in[0,1]^d,\quad 0\leq t\leq T, \;\;\P\text{-a.s.}
\end{equation}
Moreover, for any $p\geq 1$, it holds that
 \begin{equation}
 \label{eq:V-pmoments-BM}
        \mathbb{E}\left[ \max_{0\leq t\leq T}\left|V_t\right|^{2p} \right] \leq C\left( 1+ \mathbb{E}\left[\left|V_0\right|^{2p}\right] \right), 
    \end{equation}
    where the constant $C>0$ depends on  $p$, $T$, $\mathbf{b}$ and $\bsigma$.
\end{theorem}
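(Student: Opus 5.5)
The plan is to derive non-explosion from the uniform-in-$k$ moment bound of Lemma~\ref{lem:bounded_moments_true_BM}, together with the confinement of $U$ from Lemma~\ref{lem:Uin01}. Fix $p=1$ (any $p\geq1$ works). Before explosion, Lemma~\ref{lem:Uin01} gives $U_s\in[0,1]^d$, so $|U_s|\leq\sqrt d$. Hence, for $k>\sqrt d$, on the event $\{S_k\leq T\}$ the exit from the ball of radius $k$ must come from the $V$-component. By continuity of paths, $|X_{S_k}|=k$ there, so
\begin{equation*}
|V_{S_k}|^2=k^2-|U_{S_k}|^2\geq k^2-d .
\end{equation*}
Therefore
\begin{equation*}
(k^2-d)\,\P(S_k\leq T)\leq \E\Bigl[\max_{0\leq s\leq T\wedge S_k}|V_s|^{2}\Bigr]\leq C\bigl(1+\E[|V_0|^2]\bigr),
\end{equation*}
and the right-hand side is finite and independent of $k$. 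It follows that $\P(S_k\leq T)\to0$ as $k\to\infty$. Since $S_k<\tau$, we get $\P(\tau\leq T)\leq \P(S_k\leq T)$ for every $k$, hence $\P(\tau\le T)=0$. This is the statement $\P(\tau=\infty)=1$ in the convention that $\tau=\infty$ means no explosion on $[0,T]$.

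Existence on $[0,T]$ now follows from the local existence of the strong solution up to $\tau$. For uniqueness, I would use the standard localization argument for locally Lipschitz coefficients. Two solutions coincide up to the exit time of any ball, because the stopped processes solve an SDE with globally Lipschitz (truncated) coefficients. These exit times tend to $T$ almost surely by non-explosion.

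Property \eqref{eq:Uin01-BM} is immediate from Lemma~\ref{lem:Uin01}: almost surely $\tau(\omega)>T$, so we may take $s=T$.

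For \eqref{eq:V-pmoments-BM}, note that $S_k\uparrow\tau$, and $\tau>T$ almost surely. So $T\wedge S_k=T$ for $k$ large, and $\max_{0\leq s\leq T\wedge S_k}|V_s|^{2p}$ increases to $\max_{0\leq s\leq T}|V_s|^{2p}$. Monotone convergence applied to \eqref{eq:control-psik} with $t=T$ then gives the bound with the same constant $C$.

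The only delicate point is the first step. We must use that Lemma~\ref{lem:Uin01} holds only before $\tau$ and that $S_k<\tau$, so that $|U_{S_k}|\leq\sqrt d$ is legitimate. We must also ensure that the constant in Lemma~\ref{lem:bounded_moments_true_BM} is genuinely independent of $k$, which is exactly what that lemma provides.
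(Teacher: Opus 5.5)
Your proof is correct and uses the same two ingredients as the paper's proof. These are the confinement $U\in[0,1]^d$ before $\tau$ (Lemma~\ref{lem:Uin01}) and the $k$-uniform bound \eqref{eq:control-psik} of Lemma~\ref{lem:bounded_moments_true_BM}. The only real difference is how you let $k\to\infty$.

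The paper bounds $\E[\max_{0\le s\le T\wedge S_k}|X_s|^{2p}]$ uniformly in $k$ and applies Fatou's lemma to get $\E[\max_{0\le t\le T}|X_t|^{2p}]<\infty$. It then reads off non-explosion and the moment bound in one stroke. Its first identity $\max_{0\le t\le T}|X_t|^{2p}=\liminf_k\max_{0\le t\le T\wedge S_k}|X_t|^{2p}$ tacitly requires setting $\max_{0\le t\le T}|X_t|=\infty$ on $\{\tau\le T\}$.

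You instead use the Chebyshev-type (Khasminskii) estimate $\P(S_k\le T)\le C(1+\E[|V_0|^2])/(k^2-d)$. This handles the explosion event explicitly and gives a quantitative tail bound. After that, monotone convergence is unproblematic, since $T\wedge S_k=T$ for $k$ large almost surely.

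One small imprecision: $|X_{S_k}|=k$ fails when $|X_0|>k$, because then $S_k=0$. However, $|X_{S_k}|\ge k$ always holds on $\{S_k\le T\}$, and that is all your inequality $|V_{S_k}|^2\ge k^2-d$ needs.
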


\begin{proof}
Let $\varphi^p_k(t)=\E[\max_{0\leq s\leq t\wedge S_k}|X_s|^{2p} ]$. Using Lemmas \ref{lem:Uin01} and \ref{lem:bounded_moments_true_BM} one has
$$
\varphi_k^p(t)\leq C(p)\big\{  \mathbb{E}\big[ \max_{0 \leq s\leq t\wedge S_k}\left|V_t\right|^{2p} \big]   +\E[ \max_{ 0\leq s\leq t\wedge S_k}|U_s|^{2p} ] \big\}\leq 
C(p)\big\{  C\left( 1+ \mathbb{E}\big[\left|V_0\right|^{2p}\big] \right)+C(p,d) \big\},
$$
where the constant $C$ is the one in \eqref{eq:control-psik}. 

By using, for instance, Fatou's lemma, one obtains 
\begin{align*}
    \E\left[ \max_{0\leq t\leq T}|X_t|^{2p} \right]=\E\left[\liminf_k\max_{0\leq t\leq T\wedge S_k}|X_t|^{2p} \right] 
    &\leq \liminf_k\varphi^p_k(T) \\ &\leq C(p)\big\{  C\left( 1+ \mathbb{E}\big[\left|V_0\right|^{2p}\big] \right)+C(p,d) \big\}.
\end{align*}
This implies that at any time $0\leq t\leq T$ one has $|X_t|<\infty$, $\P$-a.s., and therefore $\P(\tau=\infty)=1$.

Thus, a strong solution exists, and its uniqueness follows from the fact that $\mathbf{b}$ and $\bsigma$ are locally Lipschitz (see, e.g., \cite{kara}, Theorem 5.2.5).
As $\tau=\infty$ a.s., one can take~$s=T$ in Lemma \ref{lem:Uin01} and obtain \eqref{eq:Uin01-BM}.
Applying again Fatou's lemma, but this time for inequality~\eqref{eq:control-psik}, yields~\eqref{eq:V-pmoments-BM}.
\end{proof}

\paragraph*{The \textbf{[OU]}-case.}
To get an analogue of Theorem \ref{thm:bounded_moments_true_BM} for System 
\eqref{eq:HH_redefined}$\textbf{[OU]}$ one can proceed similarly as in the $\textbf{[BM]}$-case. First, note that \eqref{eq:HH_redefined}$\textbf{[OU]}$ can be rewritten in the form
\begin{equation}
\label{eq:HHOU_Xfashion}
dX_t=\mathbf{b}_{OU}(X_t)dt+ \bsigma_{OU}(X_t)dW_t, \quad 0\leq t\leq T, 
\end{equation}
where $X_t=(V_t,U_t^T,Z_t)^T$ and the coefficients $\mathbf{b}_{OU},\bsigma_{OU}$ are defined by 
\begin{equation}
\label{eq:def-coeffXfashion-OU}
\mathbf{b}_{OU}(x)=\left(
 \begin{array}{c}
 a(u)v+b(u)+\theta(\mu-z)\\
 -\textbf{diag}\left( \balpha\bigl( v \bigr) + \bbeta\bigl( v \bigr) \right) u +\balpha \bigl( v \bigr)\\
 \theta(\mu-z)
 \end{array}
 \right)
,\;
 \bsigma_{OU}(x)=\left(
 \begin{array}{c}
 \sigma\\
 0\\
 \sigma
 \end{array}
 \right),
\end{equation}
for all $x=(v,u^T,z)^T\in\R\times[0,1]^d\times\R$. The above coefficients are locally Lipschitz and a strong solution to \eqref{eq:HHOU_Xfashion} (equiv. to \eqref{eq:HH_redefined} $\textbf{[OU]}$) exists up to explosion time 
$\tau=\inf\{0\leq t\leq T: \,|X_t|=\infty\}$. Proceeding as for Lemma
\ref{lem:Uin01} leads to the following result.
\begin{lemma}
\label{lem:Uin01-OU}
Let $X_t=(V_t,U_t^T,Z_t)^T$, $t \in [0,T]$, be the solution to~\eqref{eq:HH_redefined}\textbf{[OU]} up to explosion time~$\tau$. Then, one has
\begin{equation}
\label{eq:Uin01OU}
    \forall \omega\in\Omega,\;\;\forall s<\tau(\omega),\quad
    U_t(\omega)\in[0,1]^d,\quad 0\leq t\leq s.
\end{equation}
\end{lemma}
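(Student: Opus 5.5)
The plan is to argue pathwise, for fixed $\omega\in\Omega$ and fixed $s<\tau(\omega)$, exactly as in the proof of Lemma~\ref{lem:Uin01}. The key observation is that the $U$-equation in \eqref{eq:HH_redefined}\textbf{[OU]} contains no stochastic integral and no $Z$-dependence. It coincides with the $U$-equation of the \textbf{[BM]}-system. Given the continuous path $t\mapsto V_t(\omega)$ on $[0,s]$, each component therefore solves a scalar linear ODE
\begin{equation}
\frac{d}{dt}U^i_t=-\bigl(\alpha_i(V_t)+\beta_i(V_t)\bigr)U^i_t+\alpha_i(V_t),\qquad i=1,\ldots,d,
\end{equation}
with continuous coefficients on the compact interval $[0,s]$. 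Continuity holds because $V$ is continuous there and does not explode before $\tau(\omega)$, while $\alpha_i,\beta_i$ are continuous.

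Next I solve this linear ODE explicitly by variation of constants. Set $\lambda_i(r)=\alpha_i(V_r)+\beta_i(V_r)>0$ and $\Lambda_i(t)=\int_0^t\lambda_i(r)\,dr$. Then
\begin{equation}
U^i_t=e^{-\Lambda_i(t)}U^i_0+\int_0^t e^{-(\Lambda_i(t)-\Lambda_i(r))}\alpha_i(V_r)\,dr .
\end{equation}
Nonnegativity is immediate, since $U^i_0\in[0,1]$ and $\alpha_i>0$. For the upper bound, I apply the same formula to $1-U^i_t$, which solves $\frac{d}{dt}(1-U^i_t)=-\lambda_i(t)(1-U^i_t)+\beta_i(V_t)$. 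This gives
\begin{equation}
1-U^i_t=e^{-\Lambda_i(t)}(1-U^i_0)+\int_0^t e^{-(\Lambda_i(t)-\Lambda_i(r))}\beta_i(V_r)\,dr\geq 0,
\end{equation}
because $\beta_i>0$ and $U^i_0\le 1$. Together these yield $U_t(\omega)\in[0,1]^d$ for all $0\le t\le s$, which is \eqref{eq:Uin01OU}.

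The only point needing care is justifying that the pathwise ODE representation holds. Before explosion, $X$ is a genuine continuous semimartingale solution on $[0,s]$. Its $U$-component has zero diffusion coefficient, as seen in $\bsigma_{OU}$ in \eqref{eq:def-coeffXfashion-OU}, so $U_t(\omega)$ is absolutely continuous and satisfies the integral equation path by path. One may need to restrict to an $\omega$-set of full probability where the solution identity holds. If the statement is required literally for every $\omega$, we can use the convention, implicit in Lemma~\ref{lem:Uin01}, that the solution is chosen with all paths satisfying the integral equation. Since $Z$ never enters the $U$-equation, no further adaptation from the \textbf{[BM]}-case is needed.
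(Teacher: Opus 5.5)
Your proof is correct and follows the same route as the paper. The paper proves this lemma by referring back to the pathwise variation-of-constants argument of Lemma~\ref{lem:Uin01}, using that $Z$ does not enter the $U$-equation. The only cosmetic difference is the upper bound: you apply the same formula to $1-U^i_t$, whose source term is $\beta_i$, whereas the paper bounds $\alpha_l\le\alpha_l+\beta_l$ and integrates explicitly to get $U^l_t\le 1+(u^l_0-1)e^{-\int_0^t(\alpha_l(V_r)+\beta_l(V_r))\,dr}$.
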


We then set  again  $S_k=\inf\{ t\geq 0:\,|X_t|>k \}$   and  get an analogue of Lemma \ref{lem:bounded_moments_true_BM} for the \textbf{[OU]}-case:
\begin{lemma}\label{lem:bounded_moments_OU}
    Let $X_t=(V_t,U_t^T,Z_t)^T$, $t \in [0,T]$, be the solution of System \eqref{eq:HH_redefined}\textbf{[OU]} up to explosion time $\tau$. Then, for any $p\geq 1$, it holds that  
    \begin{equation}
    \label{eq:control-psik-OU}
        \E\left[ \max_{0\leq s\leq t\wedge S_k}|V_s|^{2p}
     + \max_{0\leq t\wedge S_k}|Z_s|^{2p}\right]
         \leq C(1+\E\left[|V_0|^{2p}\right] + \E\left[|Z_0|^{2p}\right]), \quad 0\leq t\leq T,
    \end{equation}
     where the constant $C>0$  depends on $p$, $T$, $\mathbf{b}_{OU}$ and $\bsigma_{OU}$. 
\end{lemma}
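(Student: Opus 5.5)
The plan is to handle $Z$ first and then treat $V$ exactly as in Lemma \ref{lem:bounded_moments_true_BM}, with $\theta(\mu-Z)$ viewed as an extra additive forcing term. The OU component is autonomous and linear. Its moments therefore do not need the localisation, but working up to $t\wedge S_k$ does no harm.

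\textbf{Step 1: the $Z$-component.} Write
$$Z_s=Z_0+\int_0^s\theta(\mu-Z_r)\,dr+\sigma W_s.$$
For $s\leq t\wedge S_k$, use $|x+y+z|^{2p}\leq 3^{2p-1}(|x|^{2p}+|y|^{2p}+|z|^{2p})$ together with Hölder's inequality on the time integral. This gives
$$\max_{0\le s\le t\wedge S_k}|Z_s|^{2p}\le C\Big(|Z_0|^{2p}+1+\int_0^{t}\max_{0\le r\le u\wedge S_k}|Z_r|^{2p}\,du+\max_{0\le s\le T}|W_s|^{2p}\Big).$$
The Brownian term is bounded by Doob's inequality or the Burkholder--Davis--Gundy inequality. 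The function $\psi_k(t)=\E[\max_{s\le t\wedge S_k}|Z_s|^{2p}]$ is finite, since $|Z_s|\le k$ on $[0,S_k]$ apart from the starting value, and that value is integrable. Grönwall's inequality then yields $\psi_k(t)\le C(1+\E|Z_0|^{2p})$, with $C$ independent of $k$.

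\textbf{Step 2: the $V$-component.} Apply Lemma \ref{lem:Uin01-OU}. Since $S_k<\tau$, we have $U_s\in[0,1]^d$ for $s\le t\wedge S_k$. Continuity of $a$ and $b$ on the compact set $[0,1]^d$ then gives $|a(U_s)|\le \bar a$ and $|b(U_s)|\le \bar b$. Hence
$$|V_s|\le |V_0|+\int_0^s\big(\bar a|V_r|+\bar b+\theta|\mu|+\theta|Z_r|\big)dr+\sigma|W_s|.$$
Taking the $2p$-th power, the maximum over $s\le t\wedge S_k$ and expectations, and using Hölder's inequality, we get
$$\varphi_k(t):=\E\Big[\max_{s\le t\wedge S_k}|V_s|^{2p}\Big]\le C\Big(\E|V_0|^{2p}+1+\int_0^t\varphi_k(u)\,du+\int_0^t\psi_k(u)\,du\Big).$$
Insert the bound from Step 1 for $\psi_k$ and apply Grönwall's inequality once more; again $\varphi_k$ is finite by the same localisation argument. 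This gives $\varphi_k(t)\le C(1+\E|V_0|^{2p}+\E|Z_0|^{2p})$. Adding the two estimates proves \eqref{eq:control-psik-OU}. Alternatively, one can run a single Grönwall argument on the sum $\varphi_k+\psi_k$.

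\textbf{Main obstacle.} The only delicate point is to keep everything inside the stopped interval. The bound $U\in[0,1]^d$, and hence the boundedness of $a(U)$ and $b(U)$, is only available before $\tau$. The stopping also guarantees that $\varphi_k$ and $\psi_k$ are finite, which Grönwall's inequality requires. With the time integrals bounded by $\int_0^{t}\max_{r\le u\wedge S_k}(\cdot)\,du$, everything else is routine and the constants depend only on $p$, $T$ and the coefficients $\mathbf{b}_{OU}$ and $\bsigma_{OU}$.
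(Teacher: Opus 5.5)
Your proof is correct and matches the paper's argument: the paper derives the same two stopped integral inequalities, one for $V$ (with the extra term $\int_0^t \E[\max_{r\le s\wedge S_k}|Z_r|^{2p}]\,ds$) and one for $Z$, using $U\in[0,1]^d$ before $\tau$ exactly as in Lemma~\ref{lem:bounded_moments_true_BM}. The only cosmetic difference is that the paper takes your ``alternative'' route directly, summing the two inequalities and applying Gr\"onwall once to $\Psi_k=\varphi_k+\psi_k$, instead of applying Gr\"onwall first to $Z$ and then to $V$.
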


\begin{proof}
    In the Appendix, we explain how the arguments in the proof of Lemma~\ref{lem:bounded_moments_true_BM} can be adapted to the \textbf{[OU]}-case.
\end{proof}

One then obtains the following result.
\begin{theorem}
         \label{thm:existence-and-bounds-OU}
         It holds that $\P(\tau=\infty)=1$, so that a unique strong solution $X=(V,U^T,Z)^T$  to~\eqref{eq:HH_redefined}\textbf{[OU]} exists on the time interval $[0,T]$. Besides, for this strong solution one has
\begin{equation}
\label{eq:Uin01-OU}
U_t\in[0,1]^d,\quad 0\leq t\leq T, \;\;\P\text{-a.s.}
\end{equation}
Moreover, for any $p\geq 1$, it holds that
 \begin{equation}
 \label{eq:V-pmoments-OU}
        \mathbb{E}\left[ \max_{0\leq t\leq T}\left|V_t\right|^{2p} \right] \leq C\left( 1+ \mathbb{E}\left[\left|V_0\right|^{2p}\right] \right), 
    \end{equation}
    where the constant $C>0$ depends on $p$, $T$, $\mathbf{b}_{OU}$ and $\bsigma_{OU}$.
     \end{theorem}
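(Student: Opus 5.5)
The argument mirrors the proof of Theorem~\ref{thm:bounded_moments_true_BM}, with Lemmas~\ref{lem:Uin01-OU} and~\ref{lem:bounded_moments_OU} taking the places of Lemmas~\ref{lem:Uin01} and~\ref{lem:bounded_moments_true_BM}.

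First, set $\varphi^p_k(t)=\E[\max_{0\leq s\leq t\wedge S_k}|X_s|^{2p}]$ with $X=(V,U^T,Z)^T$. By the elementary inequality $|X_s|^{2p}\leq C(p)\{|V_s|^{2p}+|U_s|^{2p}+|Z_s|^{2p}\}$ we can split $\varphi^p_k$ into three terms. Since $S_k<\tau$, Lemma~\ref{lem:Uin01-OU} gives $U_s\in[0,1]^d$ for $s\leq t\wedge S_k$, so the $U$-term is bounded by $C(p,d)$. Lemma~\ref{lem:bounded_moments_OU} controls the $V$- and $Z$-terms. This yields a bound
\[
\varphi^p_k(t)\leq C\bigl(1+\E[|V_0|^{2p}]+\E[|Z_0|^{2p}]\bigr)+C(p,d),
\]
which is uniform in $k$ and finite because all moments of $V_0$ and $Z_0$ are assumed finite.

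Next, we let $k\to\infty$. Since $S_k\uparrow\tau$, we have $\max_{0\leq t\leq T\wedge S_k}|X_t|^{2p}\to\max_{0\le t\le T\wedge\tau}|X_t|^{2p}$, and Fatou's lemma gives
\[
\E\Bigl[\liminf_k\max_{0\leq t\leq T\wedge S_k}|X_t|^{2p}\Bigr]\leq\liminf_k\varphi^p_k(T)<\infty.
\]
Hence $\max_{t\le T\wedge\tau}|X_t|<\infty$ a.s., which rules out explosion on $[0,T]$, so $\P(\tau=\infty)=1$. Uniqueness follows from the local Lipschitz property of $\mathbf{b}_{OU}$ and $\bsigma_{OU}$ (\cite{kara}, Theorem 5.2.5). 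Taking $s=T$ in Lemma~\ref{lem:Uin01-OU} then gives \eqref{eq:Uin01-OU}.

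Finally, applying Fatou's lemma to \eqref{eq:control-psik-OU} along $S_k\uparrow\infty$ gives the moment bound on $\max_{t\le T}|V_t|^{2p}$, but only with right-hand side $C(1+\E|V_0|^{2p}+\E|Z_0|^{2p})$. This is the main obstacle: \eqref{eq:V-pmoments-OU} displays only $\E|V_0|^{2p}$. I would handle it in one of two ways.
\begin{itemize}
\item Absorb $\E|Z_0|^{2p}$ into the constant $C$, since $Z_0$ is a fixed datum of the problem. This is consistent with the convention that $C$ may depend on the model data.
\item Obtain a sharper bound directly. The $Z$-component is an explicit Ornstein--Uhlenbeck process whose moments are bounded by $C(1+\E|Z_0|^{2p})$ independently of $V$. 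Substituting $\theta(\mu-Z)$ as a forcing term in the $V$-equation, and using $a<0$ and the boundedness of $b$ on $[0,1]^d$, a Grönwall argument bounds $V$ by its initial value plus the contributions of $Z$ and of the noise.
\end{itemize}
I would state the first option explicitly as the intended reading of \eqref{eq:V-pmoments-OU}.
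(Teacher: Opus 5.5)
Your argument is the paper's: bound $\varphi^p_k$ through Lemmas~\ref{lem:Uin01-OU} and~\ref{lem:bounded_moments_OU}, use Fatou to rule out explosion, invoke local Lipschitz uniqueness, take $s=T$ in Lemma~\ref{lem:Uin01-OU}, and apply Fatou to \eqref{eq:control-psik-OU}. You are right that this last step leaves $\E[|Z_0|^{2p}]$ on the right-hand side, which the paper's ``minor last steps'' pass over. Your first option, letting $C$ also depend on $\E[|Z_0|^{2p}]$, is the correct reading; your second option cannot remove that dependence, since a large deterministic $Z_0$ forces $|V_t|$ of the same order through the drift term $\theta(\mu-Z_t)$.
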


     \begin{proof}
         We can proceed as in the proof of Theorem~\ref{thm:bounded_moments_true_BM} to obtain that
     $$
     \E\left[ \max_{0\leq t\leq T}|X_t|^{2p} \right]\leq C(p)\big\{  C\left( 1+ \mathbb{E}\left[\left|V_0\right|^{2p}\right]+\mathbb{E}\left[\left|Z_0\right|^{2p}\right]\right)+C(p,d) \big\},$$
     where the constant $C$ is as in \eqref{eq:cont-Psi-OU} (cf. Appendix \ref{appA}). 
     Then minor last steps, similar to those in the proof of Theorem~\ref{thm:bounded_moments_true_BM}, lead to the result.
     \end{proof}
     
     \begin{remark}
     Note that the above proof leads to the estimate
    \begin{equation}
    \label{eq:Z-pmoments-crude}
        \mathbb{E}\left[ \max_{0\leq t\leq T}\left|Z_t\right|^{2p} \right] \leq C\left( 1+ \mathbb{E}\left[\left|Z_0\right|^{2p}\right] \right), 
    \end{equation}
    where the constant $C$ depends on $T$, which is not surprising as the coefficients of the SDE for $Z$ satisfy a linear growth condition (see, e.g., Problem 5.3.15 in \cite{kara}). However, the control of moments of an O.U. process can be refined, achieving a bound $C$ which is independent of $T$. This will be performed in Lemma \ref{lemma:boundedMomentsZ} (cf. Appendix \ref{appA}) and is particularly relevant when proving the properties of the splitting schemes for System \eqref{eq:HH_redefined}\textbf{[OU]} in Section \ref{sec:Splitting_properties}. 
     \end{remark}

\subsection{Motivating example: Hodgkin-Huxley neuronal model}
\label{sec:HHmodel}

The work presented in this paper is motivated by the Hodgkin-Huxley neuronal model (\cite{Hodgkin1952}), which is arguably the most famous and widely used mathematical model to describe how action potentials are generated and propagated in a neuron. In its original form, it is given by the following 4-dimensional ODE
\begin{equation}\label{eq:original_HH}
\begin{aligned}
    dV_t &= \frac{1}{C} \left( I - {g}_K n_t^4 (V_t - E_K) 
        - {g}_{Na} m_t^3 h_t (V_t - E_{Na}) 
        - {g}_L (V_t - E_L) \right) dt, \\
    dn_t &= \alpha_n(V_t)(1-n_t) - \beta_n(V_t) n_t \ dt, \\
    dm_t &= \alpha_m(V_t)(1-m_t) - \beta_m(V_t) m_t \ dt, \\
    dh_t &= \alpha_h(V_t)(1-h_t) - \beta_h(V_t) h_t \ dt,
\end{aligned}
\end{equation}
where the variable $V_t$ describes the membrane potential dynamics of a neuron with capacitance $C$ (in particular, its response to an input current $I$), and the variables $n_t$, $m_t$, and $h_t$ are auxiliary components between $0$ and $1$, representing the potassium channel subunit activation, sodium channel subunit activation, and sodium channel subunit inactivation, respectively. The constants ${g}_K, {g}_{Na}, {g}_L \in (0,\infty)$ represent the conductances, and the constants $E_K, E_{Na}, E_L \in \mathbb{R}$ the reversal potentials, for the potassium (K), sodium (Na) and leak (L) channels, respectively. Moreover, in the original paper by \cite{Hodgkin1952}, the following rate functions, which take values in $(0,\infty)$, are considered
\[
\begin{array}{ll}
 \alpha_n(V_t) = \frac{0.01(10+V_{rest} - V_t)}{\exp\big(\frac{10+V_{rest} - V_t}{10}\big)-1}, & \beta_n(V_t) = 0.125\exp\bigg(\frac{V_{rest} - V_t}{80}\bigg),  \\ 
 \alpha_m(V_t) = \frac{0.1(25+V_{rest}-V_t)}{\exp\big(\frac{25+V_{rest} - V_t}{10}\big)-1}, &  \beta_m(V_t) = 4\exp\bigg(\frac{V_{rest} - V_t}{18}\bigg), 
\\ \alpha_h(V_t) = 0.07\exp\bigg(\frac{V_{rest} - V_t}{20}\bigg), & \beta_h(V_t) = \frac{1}{\exp\big(\frac{30+V_{rest} - V_t}{10}\big) + 1},
\end{array} 
\]
where $(V_{rest} - V_t)$ denotes the negative depolarization in the membrane potential $V_t$. 

Introducing the 3-dimensional variable $U_t:=(n_t,m_t,h_t)^T \in [0,1]^3$, defining the functions
\begin{equation}\label{eq:a_b_functions}
\begin{aligned}
    a\bigl(U_t\bigr) & := \frac{1}{C}\left(-{g}_Kn^4_t-{g}_{Na}m^3_th_t-{g}_L \right),\\ 
    b\bigl(U_t\bigr) & := \frac{1}{C}\left(I+{g}_K E_K n^4_t +{g}_{Na}E_{Na}m^3_th_t+{g}_L E_L\right),
\end{aligned}
\end{equation}
and adding a suitable stochastic term (cf. Section \ref{sec:model_description}), the original Hodgkin-Huxley neuronal model \eqref{eq:original_HH} fits into the class of conditionally linear systems of type \eqref{eq:HH_redefined} investigated in this manuscript.

\section{Splitting methods}\label{section:splitting_schemes}

The numerical splitting approach consists of the following three steps:
\begin{itemize}
    \item[i.] Split the (unsolvable) system into exactly solvable subsystems.
    \item[ii.] Derive the exact solution for each of these subsystems.
    \item[iii.] Compose the derived exact solutions in a suitable way.
\end{itemize}

Let $t_0=0, \ldots, t_n=T$ be a discretization of the time interval $[0,T]$, where $n\in\mathbb{N}\backslash \{0\}$, $t_i=i\Delta$, for $i=0,\ldots,n$, with $\Delta=T/n$. Throughout, we assume that $\Delta \in (0,1)$.  

In Section \ref{subsec:Splitting_BM_HH} and Section \ref{subsec:Splitting_OU_HH}, we  construct  numerical solutions $(\widetilde{X}_{t_i})_{i=0,\ldots,n}$ based on the splitting approach, for System~\eqref{eq:HH_redefined}\textbf{[BM]} and \eqref{eq:HH_redefined}\textbf{[OU]}, respectively, which approximate the true (unknown) solution $(X_t)_{t\in [0,T]}$ at the discrete time points $t_i$, where $\widetilde{X}_{t_0=0}=X_0$. Moreover, in Section~\ref{subsec:Splitting_contVersions}, we introduce continuous-time extensions $(\overline{X}_t)_{t\in[0,T]}$ of the constructed splitting methods, which coincide with the discrete-time versions at the grid points $t=t_i$, i.e. $\overline{X}_{t_i}=\widetilde{X}_{t_i}$, for $i=0,\ldots,n$. These  continuous-time extensions are introduced to facilitate the convergence analysis of the splitting schemes (see Section \ref{sec:Splitting_properties}) via the framework introduced in Section \ref{sec:fund-thm}.

\begin{remark}
    It is worth noting that the It\^o and Stratonovich formulations of both System \eqref{eq:HH_redefined}\textbf{[BM]} and System \eqref{eq:HH_redefined}\textbf{[OU]} coincide. For System \eqref{eq:HH_redefined}\textbf{[BM]}, this is because  the diffusion coefficient $\Sigma$ only depends on the $U$-component, while acting solely on the $V$-component. For System \eqref{eq:HH_redefined}\textbf{[OU]}, it is due to additive noise. This is beneficial because the literature suggests potential advantages of constructing splitting schemes from the Stratonovich form (see, e.g., \cite{Misawa2001, Alamo2016,Foster2024}).
\end{remark}

\subsection{Splitting methods for the Brownian motion-driven model}\label{subsec:Splitting_BM_HH}

In this subsection, we consider the Brownian motion-driven SDE, i.e. System  \eqref{eq:HH_redefined}\textbf{[BM]}.  

\paragraph*{Step i (definition of subsystems).}

Exploiting the conditional linearity of the SDE, we propose to split System  \eqref{eq:HH_redefined}\textbf{[BM]} into the following two exactly solvable subsystems
\begin{align}
    \label{eq:sub1_BM}d \begin{pmatrix}
        V_t^{[1]} \\
        U_t^{[1]} 
    \end{pmatrix} & = \begin{pmatrix}
        a(U_t^{[1]})V_t^{[1]}+b(U_t^{[1]}) \\
        0_d
    \end{pmatrix}dt + 
    \begin{pmatrix}
        \Sigma(U_t^{[1]}) \\
        0_d
    \end{pmatrix}dW_t, \\
    \label{eq:sub2_BM}d \begin{pmatrix}
        V_t^{[2]} \\
        U_t^{[2]} 
    \end{pmatrix} & = \begin{pmatrix}
        0 \\
        -\textbf{diag}\left( \balpha\bigl( V_t^{[2]} \bigr) + \bbeta\bigl( V_t^{[2]} \bigr) \right) U_t^{[2]} +\balpha \bigl( V_t^{[2]} \bigr)
    \end{pmatrix}dt.
\end{align}
Note that, in the first subsystem the $U^{[1]}$-component is held constant so that the $V^{[1]}$-component corresponds to the solution of a linear SDE, while in the second subsystem the $V^{[2]}$-component is held constant so that the $U^{[2]}$-component solves a linear ODE.

\paragraph*{Step ii (solutions of subsystems).}

The integral equation, which defines the (stochastic) flow of Subsystem \eqref{eq:sub1_BM} with initial condition $X_0^{[1;\textrm{BM}]}=x=(v,u^T)^T$ is given by
\begin{equation}\label{eq:sub1_BM_flow_intEq}
    \phi_{t}^{[1;\textrm{BM}]}(x) = 
    \begin{pmatrix}
        \phi_{t}^{[1;\textrm{BM}],1}(x) \\ 
        \phi_{t}^{[1;\textrm{BM}],2}(x)
    \end{pmatrix} = 
    \begin{pmatrix}
         v+ \int\limits_0^t a(u) \phi_{s}^{[1;\textrm{BM}],1}(x) + b(u) ds + \int\limits_0^t \Sigma(u) dW_s \\
         u
    \end{pmatrix}.
\end{equation}
The flow $\phi_{t}^{[1;\textrm{BM}]}$ admits the explicit expression 
\begin{equation}\label{eq:sub1_BM_flow}
    \phi_{t}^{[1;\textrm{BM}]}(x) = \left(\begin{matrix} \psi_{\textrm{BM}}(t;v,u)  \\ u\end{matrix} \right), 
\end{equation}
where
\begin{align*}
    \psi_{\textrm{BM}}(t;v,u) &:= e^{a(u)t}v + b(u) \int\limits_{0}^{t} e^{a(u)(t-s)} ds + \Sigma(u) \int\limits_{0}^{t} e^{a(u)(t-s)} dW_s \\
    &= e^{a(u)t}v + \frac{b(u)}{a(u)}\left( e^{a(u)t} -1 \right) + \Sigma(u) \int\limits_{0}^{t} e^{a(u)(t-s)} dW_s.
\end{align*}

The integral equation, which defines the (deterministic) flow of Subsystem \eqref{eq:sub2_BM} with initial condition $X_0^{[2;\textrm{D}]}=x=(v,u^T)^T$ is given by
\begin{equation}\label{eq:sub2_BM_flow_intEq}
    \phi_t^{[2;\textrm{D}]}(x) = 
    \begin{pmatrix}
        \phi_t^{[2;\textrm{D}],1}(x) \\
        \phi_t^{[2;\textrm{D}],2}(x) 
    \end{pmatrix}
    =
    \begin{pmatrix}
        v \\
        u + \int\limits_0^t \left(-\textbf{diag}\left( \balpha\bigl( v \bigr) + \bbeta\bigl(v \bigr) \right) \phi_s^{[2;\textrm{D}],2}(x) \right) + \balpha\bigl( v \bigr) ds
    \end{pmatrix}.
\end{equation}
The flow $\phi_{t}^{[2;\textrm{D}]}$ admits the explicit expression
\begin{equation}\label{eq:sub2_BM_flow}
    \phi_t^{[2;\textrm{D}]}(x) = \left(\begin{matrix} v  \\  \psi_{\textrm{D}}(t;v,u) \end{matrix} \right), 
\end{equation}
where
\begin{align*}
    \psi_{\textrm{D}}(t;v,u)&:=e^{-\textbf{diag} \bigl( \balpha(v)+\bbeta(v) \bigr) t } u  + \int\limits_{0}^{t} e^{- \textbf{diag} \bigl( \balpha(v)+\bbeta(v) \bigr) (t-s)  } \balpha(v) ds \\
    &=e^{-\textbf{diag} \bigl( \balpha(v)+\bbeta(v) \bigr) t } u + \left( \mathbb{I}_d - e^{-\textbf{diag} \bigl( \balpha(v)+\bbeta(v) \bigr) t } \right) U^\infty(v),
\end{align*}
and
\begin{equation*}
    U^\infty(v) :=\left( U^{1,\infty}(v),\ldots, U^{d,\infty}(v) \right)^T, \quad \text{with} \quad U^{l,\infty}(v)=\frac{\alpha_l(v)}{\alpha_l(v)+\beta_l(v)}, \quad \text{for } l\in\{1,\ldots,d\}.
\end{equation*}

\paragraph*{Step iii (composition of solutions).}

To obtain a numerical approximation of the solution of SDE \eqref{eq:HH_redefined}\textbf{[BM]}, the flows of the two subequations have to be properly composed in every iteration step. We consider two Lie-Trotter compositions and one Strang composition, which are given recursively by 
\begin{equation}\label{eq:splittings_BM}
    \begin{aligned}
        \widetilde{X}^{LT1}_{t_{i}} &= \left(\phi_{\Delta}^{[1;\textrm{BM}]}\circ \phi_\Delta^{[2;\textrm{D}]} \right)(\widetilde{X}^{LT1}_{t_{i-1}}), \\
        \widetilde{X}^{LT2}_{t_{i}} &= \left(\phi_\Delta^{[2;\textrm{D}]}\circ \phi_{\Delta}^{[ 1;\textrm{BM}]} \right)(\widetilde{X}^{LT2}_{t_{i-1}}), \\ 
        \widetilde{X}^{S}_{t_{i}} &= \left(\phi_{\Delta/2}^{[2;\textrm{D}]}\circ \phi_{\Delta}^{[1;\textrm{BM}]}\circ \phi_{\Delta/2}^{[2;\textrm{D}]} \right)(\widetilde{X}^{S}_{t_{i-1}}),
    \end{aligned}
\end{equation}
respectively, for $i=1,\ldots,n$, with initial condition $\widetilde{X}^{LT1}_{t_{0}}=\widetilde{X}^{LT2}_{t_{0}}=\widetilde{X}^{S}_{t_{0}}=X_0$.

Specifically, using the explicit formulas \eqref{eq:sub1_BM_flow} and \eqref{eq:sub2_BM_flow} for the flows of the subequations, the first Lie-Trotter scheme is given by the iteration 
\begin{equation}\label{eq:BM_HH_LT1}
    \begin{aligned}
        {U}^{LT1}_i &:=\psi_\textrm{D}\bigl(\Delta;\widetilde{V}^{LT1}_{t_{i-1}},\widetilde{U}^{LT1}_{t_{i-1}}\bigr), \\
        \widetilde{V}^{LT1}_{t_{i}}& = \psi_{\textrm{BM}}\bigl(\Delta;\widetilde{V}^{LT1}_{t_{i-1}},{U}^{LT1}_i\bigr), \\
        \widetilde{U}^{LT1}_{t_{i}}& = {U}^{LT1}_i,
    \end{aligned}
\end{equation}
the second Lie-Trotter scheme by the iteration
\begin{equation}\label{eq:BM_HH_LT2}
    \begin{aligned}
         {U}_i^{LT2}&:=\widetilde{U}^{LT2}_{t_{i-1}}, \\
        \widetilde{V}^{LT2}_{t_{i}}& = \psi_{\textrm{BM}}\bigl(\Delta;\widetilde{V}^{LT2}_{t_{i-1}},{U}_i^{LT2}\bigr), \\
        \widetilde{U}^{LT2}_{t_{i}}& = \psi_{\textrm{D}} (\Delta, \widetilde{V}^{LT2}_{t_{i}}, {U}_i^{LT2}),
    \end{aligned}
\end{equation}
and the Strang scheme by the iteration
\begin{equation}\label{eq:BM_HH_S}
    \begin{aligned}
        {U}^{S}_i&:=\psi_\textrm{D}\bigl(\Delta/2;\widetilde{V}^{S}_{t_{i-1}},\widetilde{U}^{S}_{t_{i-1}}\bigr), \\
        \widetilde{V}^{S}_{t_{i}}& = \psi_{\textrm{BM}}\bigl(\Delta;\widetilde{V}^{S}_{t_{i-1}},{U}_i^{S}\bigr), \\
        \widetilde{U}^{S}_{t_{i}}& = \psi_{\textrm{D}} (\Delta/2; \widetilde{V}^{S}_{t_{i}}, {U}_i^{S}).
    \end{aligned}
\end{equation}

Note in particular that the $V$-components of the schemes are all of the type
\begin{align}\label{eq:explicitV_BM}
    \widetilde{V}_{t_i} &= \psi_{\textrm{BM}}\bigl(\Delta;\widetilde{V}_{t_{i-1}},{U}_i\bigr) = e^{a\left({U}_i\right)\Delta} \widetilde{V}_{t_{i-1}} + \frac{b\bigl({U}_i\bigr) }{ a\bigl({U}_i\bigr) } \left(  e^{a\left({U}_i\right)\Delta} -1 \right) + \chi_{i-1}^{\textrm{BM}}(U_i), \\
    \chi_{i-1}^{\textrm{BM}} \bigl({U}_i\bigr) &:= \Sigma\bigl({U}_i\bigr) \int\limits_{t_{i-1}}^{t_i} e^{a\bigl({U}_i\bigr)(t_i-s)} dW_s,
\end{align}
where the It\^o-integrals $\chi_{i-1}^{\textrm{BM}} \bigl({U}_i\bigr)$, for $i=1,\ldots,n$, are independent. This is because, for each scheme, the corresponding ${U}_i$ as in \eqref{eq:BM_HH_LT1}-\eqref{eq:BM_HH_S} are $\mathcal{F}_{t_{i-1}}$-measurable for $i=1,\ldots,n$.

\begin{remark}
Note that when $\Sigma(U)\equiv 0$, which includes the classical deterministic Hodgkin-Huxley model, the derivations remain valid and the constructed splitting schemes still apply; cf. \cite{Chen2020}. 
\end{remark}

\paragraph*{Implementation of the splitting schemes.}

The schemes are constructed from the iterative expressions \eqref{eq:BM_HH_LT1}-\eqref{eq:BM_HH_S}.
To implement them, we use the fact that the It\^o-integrals $\chi^{\textrm{BM}}_{i-1} ({u}_i)$ defined in \eqref{eq:explicitV_BM}, for given realizations ${u}_i$ of ${U}_i$, $i=1,\ldots,n$, are independent and normally distributed with mean zero and variance 
\begin{equation}\label{eq:nu_variance_BM}
    \nu^2_{\textrm{BM}}(\Delta;{u}_i):=\frac{\Sigma^2({u}_i)}{2a({u}_i)}\left( e^{2a({u}_i) \Delta} - 1\right),
\end{equation}
as follows from It\^o's isometry.

Let $(W_{t_i})_{i=0,\ldots,n}$ be a realization of the underlying Brownian motion. Then, the increments
\begin{equation}\label{eq:xi_increments}
    \xi_{i-1}:=\frac{1}{\sqrt{\Delta}}\left(W_{t_i}-W_{t_{i-1}} \right), \quad i=1,\ldots,n,
\end{equation}
are independent realizations of the standard normal distribution. Consequently, realizations of the integrals $\chi^{\textrm{BM}}_{i-1}({u}_i)$ can be generated as
\begin{equation}\label{eq:chi_realizations}
    \nu_{\textrm{BM}}(\Delta;{u}_i)\xi_{i-1}, \quad i=1,\ldots,n,
\end{equation}
yielding a pathwise (strong) approximation of the solution corresponding to the given Brownian trajectory.
\subsection{Splitting methods for the Ornstein-Uhlenbeck-driven model}\label{subsec:Splitting_OU_HH}

In this subsection, we consider the Ornstein-Uhlenbeck-driven SDE, i.e. System \eqref{eq:HH_redefined}\textbf{[OU]}. Note that Equation~\eqref{eq:OU} adds another dimension to that system, where the Brownian motion is the same in the first and the third equation.

\paragraph*{Step i (definition of subsystems).}

Similar to before, we propose to split this system into the following two exactly solvable subsystems
\begin{align}
    \label{eq:sub1_OU}d \begin{pmatrix}
        V_t^{[1]} \\
        U_t^{[1]} \\
        Z_t^{[1]}
    \end{pmatrix} & = \begin{pmatrix}
        a(U_t^{[1]})V_t^{[1]}+b(U_t^{[1]}) + \theta(\mu-Z_t^{[1]}) \\
        0_d \\
        0
    \end{pmatrix}dt + 
    \begin{pmatrix}
        \sigma \\
        0_d \\
        0
    \end{pmatrix}dW_t, \\
    \label{eq:sub2_OU}d \begin{pmatrix}
        V_t^{[2]} \\
        U_t^{[2]} \\
        Z_t^{[2]}
    \end{pmatrix} & = \begin{pmatrix}
        0 \\
        -\textbf{diag}\left( \balpha\bigl( V_t^{[2]} \bigr) + \bbeta\bigl( V_t^{[2]} \bigr) \right) U_t^{[2]} +\balpha \bigl( V_t^{[2]} \bigr) \\
        \theta(\mu-Z_t^{[2]})
    \end{pmatrix}dt + 
    \begin{pmatrix}
        0 \\
        0_d \\
        \sigma
    \end{pmatrix}dW_t.
\end{align}
Since $U^{[1]}$ and $Z^{[1]}$ are held constant in the first subsystem, the $V^{[1]}$-component solves again a linear SDE. Moreover, the dynamics for $Z^{[2]}$ in the second subsystem are independent of the other components and, since $V^{[2]}$ is held constant and $Z^{[2]}$ does not appear in the second equation, the $U^{[2]}$-component corresponds again to the solution of a linear ODE.  

\paragraph*{Step ii (solutions of subsystems).}

The integral equation, which defines the (stochastic) flow of Subsystem \eqref{eq:sub1_OU} with initial condition $X_0^{[1;\textrm{OU}]}=x=(v,u^T,z)^T$ is given by
\begin{equation}\label{eq:sub1_OU_flow_intEq}
    \phi_t^{[1;\textrm{OU}]}(x) = 
    \begin{pmatrix}
        \phi_t^{[1;\textrm{OU}],1}(x) \\
        \phi_t^{[1;\textrm{OU}],2}(x) \\
        \phi_t^{[1;\textrm{OU}],3}(x)
    \end{pmatrix}
    =
    \begin{pmatrix}
        v + \int\limits_0^t a(u)  \phi_s^{[1;\textrm{OU}],1}(x) + b(u) +\theta(\mu-z) ds +\int\limits_0^t \sigma dW_s \\
        u \\
        z
    \end{pmatrix}.
\end{equation}
The flow $\phi_{t}^{[1;\textrm{OU}]}$ admits the explicit expression
\begin{align}\label{eq:sub1_OU_flow}
        \phi_{t}^{[1;\textrm{OU}]}(x)&=
        \begin{pmatrix}
        \psi_\textrm{OU}(t;v,u,z) \\
        u \\ 
        z
        \end{pmatrix}, 
\end{align}
where
\begin{align*}
    \psi_\textrm{OU}(t;v,u,z)&=e^{a(u)t}v + \left(b(u)+\theta(\mu-z)\right) \int\limits_{0}^{t} e^{a(u)(t-s)}ds +  \sigma \int\limits_{0}^{t} e^{a(u)(t-s)}dW_s \\
    &= e^{a(u)t}v+\frac{b(u)+\theta(\mu-z)}{a(u)}\left( e^{a(u)t} - 1 \right) + \sigma \int\limits_{0}^{t} e^{a(u)(t-s)}dW_s.
\end{align*}

The integral equation, which defines the flow (stochastic in the third component) of Subsystem~\eqref{eq:sub2_OU} with initial condition $X_0^{[2;\textrm{Z}]}=x=(v,u^T,z)^T$ is given by
\begin{equation}\label{eq:sub2_OU_flow_intEq}
    \phi_t^{[2;\textrm{Z}]}(x) = 
    \begin{pmatrix}
        \phi_t^{[2;\textrm{Z}],1}(x) \\
        \phi_t^{[2;\textrm{Z}],2}(x) \\
        \phi_t^{[2;\textrm{Z}],3}(x)
    \end{pmatrix}
    =
    \begin{pmatrix}
        v \\
        u + \int\limits_0^t \left( -\textbf{diag}\left( \balpha\bigl( v \bigr) + \bbeta\bigl(v \bigr) \right) \phi_s^{[2;\textrm{Z}],2}(v,u^T) \right) + \balpha\bigl( v \bigr) ds \\
        z + \int\limits_0^t \theta \left( \mu - \phi_s^{[2;\textrm{Z}],3} (z) \right) ds + \int\limits_0^t \sigma dW_s.
    \end{pmatrix}.
\end{equation}
The flow $\phi_{t}^{[2;\textrm{Z}]}$ admits the explicit expression
\begin{align}\label{eq:sub2_OU_flow}
        \phi_{t}^{[2;\textrm{Z}]}(x)&=
        \begin{pmatrix}
        v \\
        \psi_\textrm{D}(t;v,u) \\ 
        \psi_\textrm{Z}(t;z) \end{pmatrix},
\end{align}
where $\psi_\textrm{D}(t;v,u)$ is as in \eqref{eq:sub2_BM_flow}, and
\begin{align*}
    \psi_\textrm{Z}(t;z)&= e^{-\theta t} z +\theta\mu \int\limits_{0}^{t} e^{-\theta(t-s)} ds + \sigma \int\limits_{0}^{t} e^{-\theta(t-s)}dW_s \\
    &= e^{-\theta t} z + \mu(1-e^{-\theta t}) +  \sigma \int\limits_{0}^{t} e^{-\theta(t-s)}dW_s.
\end{align*}

\paragraph*{Step iii (composition of solutions).}

Similar to before, the flows of the two subequations have to be composed in every iteration step. We consider again two Lie-Trotter compositions and one Strang composition, which are recursively given by
\begin{equation}\label{eq:splittings_OU}
    \begin{aligned}
        \widetilde{X}^{LT1}_{t_{i}} &= \left(\phi_{\Delta}^{[1;\textrm{OU}]}\circ \phi_{\Delta}^{[2;\textrm{Z}]} \right)(\widetilde{X}^{LT1}_{t_{i-1}}), \\
        \widetilde{X}^{LT2}_{t_{i}} &= \left(\phi_{\Delta}^{[2;\textrm{Z}]}\circ \phi_{\Delta}^{[1;\textrm{OU}]} \right)(\widetilde{X}^{LT2}_{t_{i-1}}), \\ 
        \widetilde{X}^{S}_{t_{i}} &= \left(\phi_{\Delta/2}^{[2;\textrm{Z}]}\circ \phi_{\Delta}^{[1;\textrm{OU}]}\circ \phi_{\Delta/2}^{[2;\textrm{Z}]} \right)(\widetilde{X}^{S}_{t_{i-1}}),
    \end{aligned}
\end{equation}
respectively, for $i=1,\ldots,n$, with initial condition $\widetilde{X}^{LT1}_{t_0}=\widetilde{X}^{LT2}_{t_0}=\widetilde{X}^{S}_{t_0}=X_0$.

Specifically, using the explicit formulas \eqref{eq:sub1_OU_flow} and \eqref{eq:sub2_OU_flow} for the flows of the subequations, the first Lie-Trotter scheme is given by the iteration 
\begin{equation}\label{eq:OU_HH_LT1}
    \begin{aligned}
        {U}^{LT1}_i &:=\psi_\textrm{D}\bigl(\Delta;\widetilde{V}^{LT1}_{t_{i-1}},\widetilde{U}^{LT1}_{t_{i-1}}\bigr), \quad 
        Z_i^{LT1} := \psi_\textrm{Z}(\Delta;Z_{t_{i-1}}), \\
        \widetilde{V}^{LT1}_{t_{i}}& = \psi_{\textrm{OU}}\bigl(\Delta;\widetilde{V}^{LT1}_{t_{i-1}},{U}^{LT1}_i,Z_i^{LT1}\bigr), \\
        \widetilde{U}^{LT1}_{t_{i}} &= {U}^{LT1}_i, \quad
        Z_{t_i}=Z_i^{LT1},
    \end{aligned}
\end{equation}
the second Lie-Trotter scheme by the iteration
\begin{equation}\label{eq:OU_HH_LT2}
    \begin{aligned}
         {U}_i^{LT2}&:=\widetilde{U}^{LT2}_{t_{i-1}}, \quad 
         Z_i^{LT2} := Z_{t_{i-1}}, \\
        \widetilde{V}^{LT2}_{t_{i}}& = \psi_{\textrm{OU}}\bigl(\Delta;\widetilde{V}^{LT2}_{t_{i-1}},{U}_i^{LT2},Z_i^{LT2}\bigr), \\
        \widetilde{U}^{LT2}_{t_{i}}& = \psi_{\textrm{D}} (\Delta, \widetilde{V}^{LT2}_{t_{i}}, {U}_i^{LT2}), \quad
        Z_{t_i}=\psi_\textrm{Z}(\Delta;Z_i^{LT2}),
    \end{aligned}
\end{equation}
and the Strang scheme by the iteration
\begin{equation}\label{eq:OU_HH_S}
    \begin{aligned}
        {U}^{S}_i&:=\psi_\textrm{D}\bigl(\Delta/2;\widetilde{V}^{S}_{t_{i-1}},\widetilde{U}^{S}_{t_{i-1}}\bigr), \quad
        Z_i^{S} := \psi_\textrm{Z}(\Delta/2;Z_{t_{i-1}}), \\
        \widetilde{V}^{S}_{t_{i}}& = \psi_{\textrm{OU}}\bigl(\Delta;\widetilde{V}^{S}_{t_{i-1}},{U}_i^{S},Z_i^S\bigr), \\
        \widetilde{U}^{S}_{t_{i}}& = \psi_{\textrm{D}} (\Delta/2; \widetilde{V}^{S}_{t_{i}}, {U}_i^S), \quad
        Z_{t_i}=\psi_\textrm{Z}(\Delta/2;Z_i^{S}).
    \end{aligned}
\end{equation}
Note in particular that the $V$-components of the schemes are all of the type
\begin{align}\label{eq:explicitV_OU}
    \widetilde{V}_{t_i} &= \psi_{\textrm{OU}}\bigl(\Delta;\widetilde{V}_{t_{i-1}},{U}_i,Z_i\bigr) = e^{a\left({U}_i\right)\Delta} \widetilde{V}_{t_{i-1}} + \frac{b\bigl({U}_i\bigr) +\theta(\mu-Z_i)}{ a\bigl({U}_i\bigr) } \left(  e^{a\left({U}_i\right)\Delta} -1 \right) 
    + \chi_{i-1}^{\textrm{OU}}(U_i), \\
    & \chi_{i-1}^{\textrm{OU}} \bigl({U}_i\bigr) := \sigma \int\limits_{t_{i-1}}^{t_i} e^{a\bigl({U}_i\bigr)(t_i-s)} dW_s.
\end{align}
Moreover, the $Z$-components of the Lie-Trotter schemes are both given by
\begin{equation}\label{eq:explicitZ_OU_LT}
    Z_{t_i} = \psi_\textrm{Z}(\Delta;Z_{t_{i-1}}) = e^{-\theta \Delta}Z_{t_{i-1}} + \mu(1-e^{-\theta \Delta}) + \chi_{i-1}^{\textrm{Z}}, \quad \chi_{i-1}^{\textrm{Z}}:= \sigma \int\limits_{t_{i-1}}^{t_i} e^{-\theta(t_i-s)} dW_s,
\end{equation}
while the Strang scheme requires to sample from the Z-component twice:
\begin{align}\label{eq:explicitZ_OU_S}
    Z_i^S &= \psi_\textrm{Z}(\Delta/2;Z_{t_{i-1}}) = e^{-\theta/2 \Delta}Z_{t_{i-1}} + \mu(1-e^{-\theta \Delta/2}) + \chi_{i-1}^{\textrm{Z},1}, \\
    Z_{t_i} &= \psi_\textrm{Z}(\Delta/2;Z_i^S) = e^{-\theta \Delta/2}Z_{i}^S + \mu(1-e^{-\theta \Delta/2}) + \chi_{i-1}^{\textrm{Z},2}, \\
    \chi_{i-1}^{\textrm{Z},1} &:= \sigma \int\limits_{t_{i-1}}^{t_{i-1}+\Delta/2} e^{-\theta(t_{i-1}+\Delta/2-s)} dW_s, \quad \chi_{i-1}^{\textrm{Z},2} := \sigma \int\limits_{t_{i-1}+\Delta/2}^{t_{i}} e^{-\theta(t_{i}-s)} dW_s.
\end{align}

\begin{remark}
    The proposed splitting technique for System \eqref{eq:HH_redefined}\textbf{[OU]} is not the only possible choice, and the number of subsystems is not restricted to two. In general, however, it is desirable to work with as few subsystems as possible, provided that they are chosen in a suitable manner, in order to keep the resulting schemes tractable and reliable. 
\end{remark}

\paragraph*{Implementation of the splitting schemes.}

The schemes are constructed via the iterative expressions \eqref{eq:OU_HH_LT1}-\eqref{eq:OU_HH_S}. To implement the Lie-Trotter schemes, we use that the It\^o-integrals $\chi_{i-1}^{\textrm{OU}}(u_i)$ defined in \eqref{eq:explicitV_OU}, for given realizations ${u}_i$ of $U_i$, $i=1,\ldots,n$, are independent and normally distributed with mean zero and variance 
\begin{equation}\label{eq:nu_variance_OU}
    \nu^2_{\textrm{OU}}(\Delta;{u}_i):=\frac{\sigma^2}{2 a({u}_i)}\left(e^{2a({u}_i) \Delta} -1 \right).
\end{equation}
Similarly, the $\chi_{i-1}^{\textrm{Z}}$ defined in \eqref{eq:explicitZ_OU_LT} for $i=1,\ldots,n$, are independent and normally distributed with mean zero and variance 
\begin{equation*}
    \nu_\textrm{Z}^2(\Delta):=\frac{\sigma^2}{2\theta}\left(1 - e^{-2\theta \Delta} \right).
\end{equation*}

Let $(W_{t_i})_{i=0,\ldots,n}$ be a realization of the underlying Brownian motion and consider independent samples of the standard normal distribution $\xi_{i-1}$, $i=1,\ldots,n$, as in \eqref{eq:xi_increments}. Then, sampling from the $\chi_{i-1}^{\textrm{OU}}(u_i)$ and $\chi_{i-1}^{\textrm{Z}}$ via
\begin{equation*}
    \nu_{\textrm{OU}}(\Delta;{u}_i)\xi_{i-1}, \quad \nu_\textrm{Z}(\Delta)\xi_{i-1}, \quad i=1,\ldots,n,
\end{equation*}
yields an approximation of the solution path, which corresponds to the given Brownian path.
Note that, in each iteration $i \in \{1,\ldots,n\}$ the same $\xi_{i-1}$ is used in both terms to account for the fact that the two It\^o-integrals $\chi_{i-1}^{\textrm{OU}}(u_i)$ and $\chi_{i-1}^{\textrm{Z}}$ are driven by the same Brownian motion.

To implement the Strang scheme, consider a realization of the underlying Brownian motion on an equidistant grid with time step size $\Delta/2$, and define
\begin{align*}
    \xi_{i-1}^1 &:= \frac{1}{\sqrt{\Delta/2}}\left( W_{t_{i-1}+\Delta/2} - W_{t_{i-1}} \right), \quad  \xi_{i-1}^2:= \frac{1}{\sqrt{\Delta/2}}\left( W_{t_{i}} - W_{t_{i-1}+\Delta/2} \right), \\ 
    \xi_{i-1} &:=\frac{1}{\sqrt{2}}\left( \xi_{i-1}^1+\xi_{i-1}^2 \right)=\frac{1}{\sqrt{\Delta}}\left( W_{t_i} -W_{t_{i-1}} \right) , \quad i=1,\ldots,n.
\end{align*}
Then, for each $i \in \{1,\ldots,n\}$, $\xi_{i-1}^1$ and $\xi_{i-1}^2$ are independent realizations of the standard normal distribution. Moreover, the $\xi_{i-1}$ (resp. the  $\xi_{i-1}^1$ or the $\xi_{i-1}^2$), for $i=1,\ldots,n$, are also independent standard normal samples. Therefore, sampling from the $\chi_{i-1}^{\textrm{Z,1}}$, $\chi_{i-1}^{\textrm{Z,2}}$, and $\chi_{i-1}^{\textrm{OU}}(u_i)$ via
\begin{equation*}
   \nu_\textrm{Z}(\Delta/2)\xi_{i-1}^1, \quad  \nu_\textrm{Z}(\Delta/2)\xi_{i-1}^2,  \quad \nu_{\textrm{OU}}(\Delta;{u}_i)\xi_{i-1},  \quad i=1,\ldots,n,
\end{equation*}
respectively, yields also an approximation of the solution path corresponding to the given Brownian path.
Note that, each iteration step $i \in \{ 1,\ldots,n \}$ of the Strang scheme involves the use of two independent standard normal samples (associated with $\Delta/2$-steps of the underlying Brownian path), making this scheme more costly to implement compared to the others.

\subsection{Continuous-time extensions of the splitting methods}\label{subsec:Splitting_contVersions}

The composition methods \eqref{eq:splittings_BM} and \eqref{eq:splittings_OU}, introduced in the previous two subsections for System~\eqref{eq:HH_redefined}\textbf{[BM]} and \eqref{eq:HH_redefined}\textbf{[OU]}, respectively, are used to construct continuous-time extensions $(\overline{X}_t)_{t\in[0,T]}$ of the proposed splitting schemes. These continuous-time extensions agree with the corresponding discrete-time approximations $(\widetilde{X}_{t_i})_{i=0,\ldots,n}$ at the grid points $t=t_{i}$, that is,  $\overline{X}_{t_i}=\widetilde{X}_{t_i}$ for all $i=0,\ldots,n$. They are subsequently employed in Section~\ref{sec:Splitting_properties} to establish boundedness of moments and mean-square convergence. 

For ease of readability, we restrict our attention to the second Lie–Trotter composition, noting that analogous representations can be derived for the other methods. 

\paragraph*{The \textbf{[BM]}-case.}

Let us denote $\overline{X}^{LT2}=\overline{X}=(\overline{V},\overline{U}^T)^T$ and set $\overline{X}_{t_0}=X_0$. Then, the continuous-time extension of the second Lie-Trotter splitting method for System \eqref{eq:HH_redefined}\textbf{[BM]} is defined by the recursive rule
\begin{equation}\label{eq:splittings_BM_cont}
    \begin{aligned}
        \overline{X}_{t} &= \left(\phi_{t-t_{i-1}}^{[2;\textrm{D}]}\circ \phi_{t-t_{i-1}}^{[ 1;\textrm{BM}]} \right)(\overline{X}_{t_{i-1}}),  \quad \forall t \in(t_{i-1},t_i] , \ 1\leq i\leq n.  
    \end{aligned}
\end{equation}
By definition of the flows $\phi^{[1;\textrm{BM}]}$ and $\phi^{[2;\textrm{D}]}$, and taking into account \eqref{eq:sub1_BM_flow_intEq} and \eqref{eq:sub2_BM_flow_intEq}, the 
second Lie-Trotter scheme is, $\forall t \in(t_{i-1},t_i] , \ 1\leq i\leq n$, given by
\begin{equation}\label{eq:LT2_BM_cont}
    \begin{aligned}
    \overline{V}_{t}&=\overline{V}_{t_{i-1}} + \int\limits_{t_{i-1}}^{t} 
   a(\overline{U}_{t_{i-1}}) \overline{V}_s + b(\overline{U}_{t_{i-1}}) \ ds + \int\limits_{t_{i-1}}^{t} \Sigma(\overline{U}_{t_{i-1}}) \ dW_s, \\
  \overline{U}_{t}&=\overline{U}_{t_{i-1}} + \int\limits_{t_{i-1}}^{t} 
    \left(-\textbf{diag}\left( \balpha\bigl( \overline{V}_{t} \bigr) + 
  \bbeta\bigl( \overline{V}_{t} \bigr) \right) \overline{U}_s \right)
  +\balpha \bigl( \overline{V}_{t} \bigr) \ ds.
\end{aligned}
\end{equation}

Note that, by construction, $\overline{V}_{t_i}=\widetilde{V}_{t_i}$ and $\overline{U}_{t_i}=\widetilde{U}_{t_i}$, for all $i=0,\ldots,n$. Moreover, $\overline{X}$ is an  adapted process, that is $\overline{X}_t$ is $\mathcal{F}_t$-measurable for every $t \in [0,T]$. 

\paragraph*{The \textbf{[OU]}-case.}

Let us denote $\overline{X}^{LT2}=\overline{X}=(\overline{V},\overline{U}^T,Z)^T$ and set $\overline{X}_{t_0}=X_0$. Then, the continuous-time extension of the second Lie-Trotter splitting method for System \eqref{eq:HH_redefined}\textbf{[OU]} is defined by the recursive rule
\begin{equation}\label{eq:splittings_OU_cont}
    \begin{aligned}
        \overline{X}_{t} &= \left(\phi_{t-t_{i-1}}^{[2;\textrm{Z}]}\circ \phi_{t-t_{i-1}}^{[ 1;\textrm{OU}]} \right)(\overline{X}_{t_{i-1}}),  \quad \forall t \in(t_{i-1},t_i] , \ 1\leq i\leq n.  
    \end{aligned}
\end{equation}
By definition of the flows $\phi^{[1;\textrm{OU}]}$ and $\phi^{[2;\textrm{Z}]}$, and taking into account \eqref{eq:sub1_OU_flow_intEq} and \eqref{eq:sub2_OU_flow_intEq}, the second Lie-Trotter scheme is, $\forall t \in(t_{i-1},t_i] , \ 1\leq i\leq n$, given by

\begin{equation}\label{eq:LT2_OU_cont}
    \begin{aligned}
    \overline{V}_{t}&=\overline{V}_{t_{i-1}} + \int\limits_{t_{i-1}}^{t} 
    a(\overline{U}_{t_{i-1}}) \overline{V}_s + b(\overline{U}_{t_{i-1}}) + \theta (\mu- Z_{t_{i-1}}) \ ds + \int\limits_{t_{i-1}}^{t} \sigma \ dW_s, \\
  \overline{U}_{t}&=\overline{U}_{t_{i-1}} + \int\limits_{t_{i-1}}^{t} 
    \left(-\textbf{diag}\left( \balpha\bigl( \overline{V}_{t} \bigr) + 
  \bbeta\bigl( \overline{V}_{t} \bigr) \right) \overline{U}_s \right)
  +\balpha \bigl( \overline{V}_{t} \bigr) \ ds, \\
  Z_t &= Z_{t_{i-1}} + \int\limits_{t_{i-1}}^t \theta(\mu-Z_s) \ ds + \int\limits_{t_{i-1}}^{t} \sigma \ dW_s.
\end{aligned}
\end{equation}

Again, by construction, we have that $\overline{V}_{t_i}=\widetilde{V}_{t_i}$ and $\overline{U}_{t_i}=\widetilde{U}_{t_i}$, for all $i=0,\ldots,n$, and that $\overline{X}_t$ is $\mathcal{F}_t$-measurable for every $t \in [0,T]$.

\section{A general localized version of the fundamental theorem of mean-square convergence}
\label{sec:fund-thm}

The purpose of this section is to establish a localized fundamental mean-square convergence theorem, together with a global mean-square convergence result, for general SDEs satisfying a local Lipschitz condition. In Section \ref{ssec:def_statements}, we introduce the relevant definitions and state the main results. The proofs of the local and global convergence results are provided in Sections \ref{ssec:proof_conv-speed-loc} and \ref{ssec:proof_conv-nospeed}, respectively. The convergence framework developed here covers the settings of Sections \ref{section:model} and \ref{section:splitting_schemes} and will be used in Section \ref{sec:Splitting_properties} to establish mean-square convergence of the proposed splitting~schemes.

\subsection{Some definitions and statement of the results}\label{ssec:def_statements}

\paragraph*{Considered class of SDEs.}

Let $D,Q\in\mathbb{N}\backslash \{0\}$, $T>0$ and consider the SDE
\begin{equation}
\label{eq:SDE-gene-2}
    dX_t=\mathbf{b}(t,X_t)dt+\bsigma(t,X_t)dW_t,\quad 0\leq t\leq T,
\end{equation}
where $\mathbf{b}:[0,T]\times\R^D\to\R^D$, $\bsigma:[0,T]\times\R^D\to\R^{D\times Q}$ and $W$ is $Q$-dimensional Brownian motion. Throughout, we assume that the coefficients $\mathbf{b}$ and $\bsigma$ are locally Lipschitz w.r.t. the space variable ({\it locally Lipschitz} in short in the sequel), that is, for any $R>0$, 
\begin{equation}
\label{eq:def-local-lip}
|\mathbf{b}(t,x)-\mathbf{b}(t,y)|+|\bsigma(t,x)-\bsigma(t,y)|\leq C_R|x-y|,\quad
\forall t\in [0,T],\forall
x,y\in B_R(0),
\end{equation}
where the constant $C_R>0$ depends on the radius $R$ of the centered ball $B_R(0)$.

Let $(\Omega,\cF,\P)$ be a probability space on which the Brownian motion $W$ is defined. We use the Brownian filtration $(\cF^W_t)$ and $\sigma(X_0)$ in order to define $(\cF_t)_{t\in[0,T]}$, the filtration satisfying the usual conditions. We further assume that there exists a unique strong solution $X$ to \eqref{eq:SDE-gene-2} (note that this is guaranteed, for example, if the condition \eqref{eq:def-local-lip} is fulfilled and the explosion time is infinite; cf. 
Subsection~\ref{ssec:existence-moments}). The solution $X$ is said to have bounded moments of order $2p$, $p\geq 1$, if it holds~that
\begin{equation}
    \label{eq:bounded-2-moments-sol}
    \E\left[\max_{0\leq t\leq T}|{X}_{t}|^{2p}\right]\leq C \left(1+\E \left[ |X_0|^{2p} \right]\right),
\end{equation}
where the constant $C>0$ depends on $p$, $T$, $\mathbf{b}$ and $\bsigma$.

\begin{remark}
    This setting is related to what has been described in Section~\ref{section:model}, but is formulated more generally. In particular, Systems \eqref{eq:HH_redefined}\textbf{[BM]} and \eqref{eq:HH_redefined}\textbf{[OU]} (see also Equations \eqref{eq:HH_Xfashion} and \eqref{eq:HHOU_Xfashion}) correspond to time-homogeneous cases of \eqref{eq:SDE-gene-2} with~$D=d+1$ and $Q=1$, and they satisfy  conditions \eqref{eq:def-local-lip} and~\eqref{eq:bounded-2-moments-sol} (cf. Theorems \ref{thm:bounded_moments_true_BM} and \ref{thm:existence-and-bounds-OU}).
\end{remark}

\paragraph*{Numerical schemes.} 

Let $t_0=0, \ldots, t_n=T$ be a discretization of the time interval $[0,T]$, where $n\in\mathbb{N}\backslash \{0\}$, $t_i=i\Delta$, for $i=0,\ldots,n$, with $\Delta=T/n$. Throughout, let $\Delta \in (0,1)$. 

Consider a stochastic numerical scheme $\widetilde{X}=(\widetilde{X}_{t_i})_{i=0,\ldots,n}$ such that  $\widetilde{X}_{t_0}=X_0$, and where $\widetilde{X}_{t_i}$ is thought to be a strong approximation of $X_{t_i}$, for any $i=1,\ldots,n$. Moreover, we assume that~$\widetilde{X}_{t_i}$ is~$\cF_{t_i}$-measurable for any $i=0,\ldots,n$. 

The study of convergence of a numerical scheme is carried out via a corresponding continuous-time extension (in the sequel also referred to as a numerical scheme, even though it is not implementable). In particular, by a continuous-time extension of a scheme $\widetilde{X}=(\widetilde{X}_{t_i})_{i=0,\ldots,n}$ we mean a process $\overline{X}=(\overline{X}_t)_{t\in[0,T]}$ such that 
\begin{equation}
\label{eq:tX-bX}
    \overline{X}_{t_i}=\widetilde{X}_{t_i},\quad \forall i=0,\ldots,n.
\end{equation}
Thus, the random variable $\overline{X}_{t_i}$ is also thought to be a strong approximation of $X_{t_i}$, for any 
\mbox{$i=0,\ldots,n$}, with the following structure
\begin{equation}
\label{eq:def-scheme-gene}
\overline{X}_{t}=F(t_{i-1},t-t_{i-1},\overline{X}_{t_{i-1}},\chi^t_{i-1}),\quad \forall 1\leq i\leq n,\;\;\forall t_{i-1}< t\leq t_i,
\end{equation}
where for any $i$ the function $F$ is Borel measurable and $\chi^t_{i-1}$  is a random variable depending on
$\{W_{t_{i-1}+r}-W_{t_{i-1}},\,0\leq r\leq t-t_{i-1}\}$  and $\overline{X}_{t_{i-1}}$.
Note that for any $0\leq t\leq T$ the random variable $\overline{X}_{t}$ is $\cF_{t}$-measurable (i.e., $\overline{X}$ is an adapted process), and for any $i=1,\ldots,n$ the random variable~$\overline{X}_{t_{i-1}}$ is independent of 
$\{W_{t_{i-1}+r}-W_{t_{i-1}}\,,0\leq r\leq \Delta\}$.  The scheme $\overline{X}$ is said to have bounded moments of order $2p$, $p\geq 1$, if it holds that
\begin{equation}
    \label{eq:bounded-2-moments}
    \E\left[\max_{0\leq t\leq T}|\overline{X}_{t}|^{2p}\right]\leq C \left(1+\E \left[ |X_0|^{2p} \right] \right),
\end{equation}
where the constant $C>0$ depends on $p$, $T$, $\mathbf{b}$ and $\bsigma$. Note that, compared to \cite{Milstein2004} we have a maximum over $t$ inside the expectation of \eqref{eq:bounded-2-moments}, as this will be needed in the proof of Theorem \ref{thm:conv-nospeed} below.

\begin{remark}
   The splitting schemes introduced in Section \ref{section:splitting_schemes} enter this general scope. For example, the second Lie-Trotter scheme \eqref{eq:splittings_BM_cont} of System \eqref{eq:HH_redefined}\textbf{[BM]} corresponds to 
\begin{equation*}
    \overline{X}_{t}=F(t_{i-1},t-t_{i-1},\overline{X}_{t_{i-1}},\chi^t_{i-1}) = \left( \phi_{t-t_{i-1}}^{[2;\textrm{D}]}\circ\phi_{t-t_{i-1}}^{[1;\textrm{BM}]} \right) (\overline{X}_{t_{i-1}}),
\end{equation*}
where $\chi^t_{i-1}=\Sigma(\overline{U}_{t_{i-1}})\int_{t_{i-1}}^te^{a(\overline{U}_{t_{i-1}})(t-s)}dW_s$, similarly to \eqref{eq:explicitV_BM}. Their moment boundedness \eqref{eq:bounded-2-moments} will be established in Section \ref{sec:Splitting_properties}.
\end{remark}

\paragraph*{Localized notion of consistency.}  

Consider the following stopping times. For any $R>0$, 
\begin{equation}\label{eq:stopping_times}
    \rho_R^{X}:= \inf \{ 0\leq t\leq T\; \colon |{X}_t| \geq R \}, \quad 
    \rho_R^{\overline{X}}:= \inf \{ 0\leq t\leq T\; \colon |\overline{X}_{t}| \geq R \}, \quad \rho_R:=\rho_R^{\overline{X}} \wedge \rho_R^X.
\end{equation}
In the same spirit we introduce another family of stopping times,
defined for any $R>0$ and $0\leq s\leq T$ (the case $s=0$ corresponds to \eqref{eq:stopping_times}):
\begin{equation}\label{eq:stopping_times-s}
\begin{gathered}
    \rho_R^{s,X}:= \inf \{ s\leq t\leq T \colon |{X}_t| \geq R \}, \quad 
    \rho_R^{s,\overline{X}}:= \inf \{ s\leq t\leq T \colon |\overline{X}_{t}| \geq R \}, \quad  \rho^s_R:=\rho_R^{s,\overline{X}} \wedge \rho_R^{s,X}.
\end{gathered}
\end{equation}

Moreover, the following notation is used: $X^{s,x}_t$ 
(resp. $\overline{X}^{s,x}_t$)
denotes the position of the process~$X$ (resp. the scheme $\overline{X}$) at time $t>s$, 
knowing that it is at point $x$ at time~$s$. 

Based on these definitions, we now introduce the following new notion of \textit{local consistency} for a numerical scheme.

\begin{definition}[Local consistency]\label{def:one_step_consistency}
    A scheme $\overline{X}$ for $X$ is said to be  {\it consistent of order} $q>1$ for the mean-square deviation {\it locally on the ball} $B_R(0)$, if for any $i=1,\ldots,n$  and any $x\in B_R(0)$, it holds that
\begin{equation}
    \label{eq:def-consist-q2}
    \left( \E\left[\Big|X^{t_{i-1},x}_{t_i\wedge\rho^{t_{i-1}}_R}-\overline{X}^{t_{i-1},x}_{t_i\wedge\rho^{t_{i-1}}_R}\Big|^2 \right] \right)^{1/2}\leq 
    C_R\Delta^{q},
\end{equation}
where the constant $C_R>0$ depends on $R$, $T$, $\mathbf{b}$ and $\bsigma$.
\end{definition}

\paragraph*{Statement of results.}

We are now in position to state the main results of the section. In~Theorem~\ref{thm:conv-speed-loc}, we state a localized version of the fundamental theorem of mean-square convergence (in the spirit of Theorem 1.1.1 by \cite{Milstein2004}, who considered the globally Lipschitz case). Then, we adapt the arguments of \cite{Higham2002} and prove that if Theorem~\ref{thm:conv-speed-loc} holds and the moments of both the process $X$ and the scheme $\overline{X}$ are bounded, then the scheme is globally mean-square convergent (without a guarantee of the speed of convergence). This is formalized in the statement of Theorem~\ref{thm:conv-nospeed}.

\begin{theorem}[Localized version of the fundamental mean-square convergence theorem]
\label{thm:conv-speed-loc}
   Assume that the coefficients $\mathbf{b}$ and $\bsigma$ of \eqref{eq:SDE-gene-2} are locally Lipschitz and there exists a unique strong solution~$X$ to \eqref{eq:SDE-gene-2}.
 Let $R>0$  and assume the scheme is consistent of order $q>1$  locally on the ball $B_R(0)$, in the sense of Definition~\ref{def:one_step_consistency}.
   Then, for any $n$ and any $i=0,\ldots,n$ the following inequality holds:
   \begin{equation*}
       \left( \E\left[ \Big|X_{t_i}-\overline{X}_{t_i}\Big|^2 \1_{t_i<\rho_R}  \right]  \right)^{1/2}\leq 
       C_R\,\Delta^{q-1},
   \end{equation*}
where the constant  $C_R>0$ depends on $R$, $T$, $\mathbf{b}$ and $\bsigma$. 
\end{theorem}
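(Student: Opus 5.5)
The plan is to adapt the classical Milstein argument for Theorem 1.1.1 of \cite{Milstein2004} to stopped processes, telescoping the global error into one-step errors. Write $e_i:=X_{t_i}-\overline{X}_{t_i}$ and $\varepsilon_i:=\big(\E[|e_i|^2\1_{t_i<\rho_R}]\big)^{1/2}$. On $\{t_i<\rho_R\}$ we also have $t_{i-1}<\rho_R$, so both $X_{t_{i-1}}$ and $\overline{X}_{t_{i-1}}$ lie in $B_R(0)$ and both paths stay in the ball on $[t_{i-1},t_i]$. I would decompose
\[
X_{t_i}-\overline{X}_{t_i}=\Big(X^{t_{i-1},X_{t_{i-1}}}_{t_i}-X^{t_{i-1},\overline{X}_{t_{i-1}}}_{t_i}\Big)+\Big(X^{t_{i-1},\overline{X}_{t_{i-1}}}_{t_i}-\overline{X}^{t_{i-1},\overline{X}_{t_{i-1}}}_{t_i}\Big),
\]
which uses the Markov/flow property of $X$ and the structure \eqref{eq:def-scheme-gene} of $\overline{X}$. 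The first term measures how the exact flow propagates the old error; the second is the one-step error started from $\overline{X}_{t_{i-1}}$.

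For the one-step error I would condition on $\cF_{t_{i-1}}$. Because $\overline{X}_{t_{i-1}}$ is independent of the subsequent Brownian increments and lies in $B_R(0)$ on the relevant event, Definition~\ref{def:one_step_consistency} applied with $x=\overline{X}_{t_{i-1}}$ bounds the conditional second moment of the stopped one-step error by $C_R^2\Delta^{2q}$. On $\{t_i<\rho_R\}$ the stopping has no effect, since the stopping time $\rho^{t_{i-1}}_R$ of the restarted pair exceeds $t_i$. The indicator is not adapted to the restarted pair, so I would instead replace it by the larger indicator of the event that both processes started at $\overline{X}_{t_{i-1}}$ remain in the ball. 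This contribution is $O(\Delta^{q})$ in $L^2$.

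For the propagated error I would use the stopped SDE on the ball, where $\mathbf{b}$ and $\bsigma$ are $C_R$-Lipschitz. Milstein's argument needs two bounds. The first is the standard stability estimate $\E|Y^x_{t}-Y^y_{t}|^2\le |x-y|^2(1+C_R\Delta)$ for the stopped processes, obtained from Itô's formula and Grönwall. The second is the mean-deviation estimate
\[
\big|\E[(Y^x_t-Y^y_t)-(x-y)]\big|\le C_R\Delta|x-y|.
\]
Expanding the square of the sum yields the cross term $2\E\langle \text{prop},\text{onestep}\rangle$. Ideally this is handled with a mean-consistency bound of order $q+1/2$, but only \eqref{eq:def-consist-q2} is assumed. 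I would therefore bound the cross term crudely by Cauchy--Schwarz, $2\varepsilon_{i-1}(1+C\Delta)C_R\Delta^q$. Absorbing it via $2ab\le \Delta a^2+\Delta^{-1}b^2$ gives
\[
\varepsilon_i^2\le (1+C_R\Delta)\varepsilon_{i-1}^2+C_R\Delta^{2q-1}.
\]
The discrete Grönwall lemma over $n=T/\Delta$ steps then yields $\varepsilon_i^2\le C_R\Delta^{2q-2}$, that is, $\varepsilon_i\le C_R\Delta^{q-1}$. This explains the loss of exactly one order, $q-1$ rather than $q-1/2$, and is consistent with the statement requiring $q>1$.

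The main obstacle is the localization bookkeeping. Bounding $\1_{t_i<\rho_R}$ by $\1_{t_{i-1}<\rho_R}$ is fine, but the propagated term must be compared with processes started from two different points, each stopped at its own exit time. I would define all auxiliary processes with coefficients replaced by globally Lipschitz truncations $\mathbf{b}_R,\bsigma_R$ that agree with $\mathbf{b},\bsigma$ on $B_R(0)$. Pathwise uniqueness then makes them coincide with $X$ and $\overline{X}$ up to $\rho_R$, so the global Lipschitz stability estimate applies directly to $\E[|e_i|^2\1_{t_i<\rho_R}]$. Before $\rho_R$ the exact solution matches the truncated one, so the consistency bound transfers as well.
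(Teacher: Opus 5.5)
Your analytic core matches the paper's Steps 1--5: condition on $\cF_{t_{i-1}}$, insert the exact flow restarted from $\bar x=\overline{X}_{t_{i-1}}$, control the propagated error by the stability estimate of Lemma~\ref{lem:diffXxy} and the one-step error by Definition~\ref{def:one_step_consistency}, bound the cross term by Cauchy--Schwarz, absorb it with Young, and close with the discrete Gr\"onwall lemma. This correctly explains why a full order is lost. The mean-deviation estimate you list is never used.

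\textbf{The gap: localizing the one-step error.} Write $x=X_{t_{i-1}}$. The event $\{t_i<\rho_R\}$ only says that the true path $X^{t_{i-1},x}$ and the scheme $\overline{X}^{t_{i-1},\bar x}$ stay in $B_R(0)$ on $[t_{i-1},t_i]$. It says nothing about the auxiliary exact flow $X^{t_{i-1},\bar x}$, which is a different process. So two of your claims are false: that on $\{t_i<\rho_R\}$ the exit time of the restarted pair $(X^{t_{i-1},\bar x},\overline{X}^{t_{i-1},\bar x})$ exceeds $t_i$, and that the indicator of that pair staying in the ball dominates $\1_{t_i<\rho_R}$.

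Truncation does not repair this. The solution $Y^{\bar x}$ with coefficients $\mathbf{b}_R,\bsigma_R$ coincides with $X^{t_{i-1},\bar x}$ only up to the exit time of $X^{t_{i-1},\bar x}$, not up to $\rho_R$. Truncation does clean up the propagated term $Y^{x}_{t_i}-Y^{\bar x}_{t_i}$. But it moves the whole difficulty into the one-step term $Y^{\bar x}_{t_i}-\overline{X}^{t_{i-1},\bar x}_{t_i}$ on the event where the scheme stays in $B_R(0)$ while $X^{t_{i-1},\bar x}$ has already left it. On that event Definition~\ref{def:one_step_consistency} gives no information, because it freezes the error at the exit time and the scheme may do anything afterwards. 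Nor is this event negligible: when $x$ lies within $\sqrt{\Delta}$ of $\partial B_R(0)$, its probability can be of order $|x-\bar x|/\sqrt{\Delta}$, not $O(\Delta^{2q})$.

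The paper's written proof passes over the same point. It uses the single symbol $\rho_R^{t_{i-1}}$ for the pair $(X^{t_{i-1},x},\overline{X}^{t_{i-1},\bar x})$ in Step~1, and also for the different pairs to which Lemma~\ref{lem:diffXxy} and Definition~\ref{def:one_step_consistency} actually apply. Following the paper will therefore not close the gap.

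\textbf{A clean repair.} Assume consistency on a strictly larger ball $B_{R'}(0)$, $R'>R$. This costs nothing for Theorem~\ref{thm:conv-nospeed} or for the splitting schemes, where consistency holds for every radius. Truncate at $R'$ and let $\tau'$ be the exit time of $X^{t_{i-1},\bar x}$ from $B_{R'}(0)$.

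On $\{t_i<\rho_R\}\cap\{\tau'>t_i\}$, all three processes stay in $B_{R'}(0)$. Hence $X_{t_i}-\overline{X}_{t_i}=(Y^x_{t_i}-Y^{\bar x}_{t_i})+D$, where $D$ is the one-step error stopped at the joint exit from $B_{R'}(0)$. Your recursion then runs verbatim with constants $C_{R'}$.

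On $\{t_i<\rho_R\}\cap\{\tau'\le t_i\}$, the scheme stays in $B_R(0)$ while $X^{t_{i-1},\bar x}$ reaches $\partial B_{R'}(0)$, so $|D|\geq R'-R$. Chebyshev together with consistency on $B_{R'}(0)$ bounds the conditional probability of this event by $C_{R'}^2\Delta^{2q}/(R'-R)^2$. Since $|X_{t_i}-\overline{X}_{t_i}|\le 2R$ there, its contribution is at most $4R^2C_{R'}^2\Delta^{2q}/(R'-R)^2$. This is absorbed into the additive $\Delta^{2q}$ term of the recursion, and the discrete Gr\"onwall step again gives $\varepsilon_i\le C\Delta^{q-1}$, with $C$ depending on $R$ and $R'$.
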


\begin{theorem}[Global mean-square convergence theorem]
\label{thm:conv-nospeed}
    Assume that the coefficients $\mathbf{b}$ and $\bsigma$ of \eqref{eq:SDE-gene-2} are locally Lipschitz and there exists a unique strong solution $X$ to \eqref{eq:SDE-gene-2}, which has bounded moments of order $2p$ for a certain $p>1$. 
   Let $\overline{X}$ be a scheme for $X$ that also has bounded moments of order $2p$ for a certain $p>1$. Further, assume that for any $R>0$ the scheme is consistent of order $q>1$ locally on the ball $B_R(0)$, in the sense of Definition~\ref{def:one_step_consistency}. Then, it holds that 
$$
\sup_{i=0,\ldots,n}\E\left[ |X_{t_i}-\overline{X}_{t_i}|^2 \right] \xrightarrow[\Delta\downarrow 0]{}0.
$$
    \end{theorem}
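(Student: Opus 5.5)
The plan is to adapt the classical argument of \cite{Higham2002}, using Theorem~\ref{thm:conv-speed-loc} in place of their localized Euler estimate. Write $e_i:=X_{t_i}-\overline{X}_{t_i}$. Fix $R>0$ and $\delta>0$, and split
\begin{equation*}
\E\left[|e_i|^2\right]=\E\left[|e_i|^2\1_{t_i<\rho_R}\right]+\E\left[|e_i|^2\1_{\rho_R\leq t_i}\right].
\end{equation*}
By Theorem~\ref{thm:conv-speed-loc}, which applies because local consistency is assumed on every ball, the first term is bounded by $C_R^2\Delta^{2(q-1)}$ uniformly in $i$.

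For the second term I will use Young's inequality in the form $ab\leq \frac{\delta}{p}a^{p}+\frac{p-1}{p\,\delta^{1/(p-1)}}b^{p/(p-1)}$, with $a=|e_i|^2$ and $b=\1_{\rho_R\leq t_i}$. This gives
\begin{equation*}
\E\left[|e_i|^2\1_{\rho_R\leq t_i}\right]\leq \frac{\delta}{p}\E\left[|e_i|^{2p}\right]+\frac{p-1}{p\,\delta^{1/(p-1)}}\P(\rho_R\leq T).
\end{equation*}
The bounded moments of order $2p$, namely \eqref{eq:bounded-2-moments-sol} and \eqref{eq:bounded-2-moments}, give $\E[|e_i|^{2p}]\leq 2^{2p-1}\left(\E[\max_t|X_t|^{2p}]+\E[\max_t|\overline{X}_t|^{2p}]\right)\leq A$, where $A$ is independent of $\Delta$ and $R$. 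If $p$ differs between $X$ and $\overline{X}$, I will take the smaller of the two values, which is still $>1$. Next, $\P(\rho_R\leq T)\leq \P(\rho^X_R\leq T)+\P(\rho^{\overline{X}}_R\leq T)$. Since $\rho_R^X\leq T$ implies $\max_{t\leq T}|X_t|\geq R$, and likewise for $\overline{X}$, Markov's inequality gives $\P(\rho_R\leq T)\leq \left(\E[\max_t|X_t|^{2p}]+\E[\max_t|\overline{X}_t|^{2p}]\right)/R^{2p}\leq A/R^{2p}$. This is exactly where the maximum inside the expectation in \eqref{eq:bounded-2-moments} is needed. The event $\{\rho_R\leq t_i\}$ means the path reached level $R$ at some time before $t_i$, not necessarily at $t_i$ itself.

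Combining the estimates yields, uniformly in $i$,
\begin{equation*}
\sup_i\E\left[|e_i|^2\right]\leq C_R^2\Delta^{2(q-1)}+\frac{\delta A}{p}+\frac{(p-1)A}{p\,\delta^{1/(p-1)}R^{2p}}.
\end{equation*}
Given $\varepsilon>0$, I first choose $\delta$ so that the second term is below $\varepsilon/3$. Then I choose $R$ large so that the third term is below $\varepsilon/3$. Finally, since $q>1$, I choose $\Delta$ small so that the first term is below $\varepsilon/3$; the constant $C_R$ is fixed at this stage.

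The main point to check carefully is that the constants in the moment bounds do not depend on $\Delta$. The constants in \eqref{eq:bounded-2-moments} are stated as depending only on $p$, $T$, $\mathbf{b}$ and $\bsigma$, so this holds by assumption. The other point to check is that Theorem~\ref{thm:conv-speed-loc} is used with the stopping time $\rho_R$ built from both processes, which matches the indicator $\1_{t_i<\rho_R}$. With these two checks in place, the rest of the argument is routine.
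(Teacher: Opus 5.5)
Your proposal is correct and follows essentially the same route as the paper. You split on $\{t_i<\rho_R\}$, control the localized part by Theorem~\ref{thm:conv-speed-loc}, bound the tail part via Young's inequality with exponents $p$ and $p/(p-1)$ together with the $2p$-moment bounds and a Markov-type estimate for $\P(\rho_R\leq t_i)$, and then choose $\delta$, $R$ and $\Delta$ in that order. Your two deviations are harmless refinements of the same argument: applying Markov's inequality directly to the running maximum (instead of using $|X_{\rho_R}|=R$), and explicitly reconciling possibly different values of $p$.
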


Since the continuous-time scheme $\overline{X}$ is constructed in a way to coincide with the corresponding discrete-time scheme $\widetilde{X}$ at the grid points $t=t_i$, that is $\overline{X}_{t_i}=\widetilde{X}_{t_i}$ for all $i=0,\ldots,n$, Theorem \ref{thm:conv-nospeed} readily implies  
$$
    \sup_{i=0,\ldots,n}\E\left[ |X_{t_i}-\widetilde{X}_{t_i}|^2 \right] \xrightarrow[\Delta\downarrow 0]{}0.
$$

\subsection{Proof of Theorem \ref{thm:conv-speed-loc}}\label{ssec:proof_conv-speed-loc}

The proof of Theorem \ref{thm:conv-speed-loc} is based on the following two technical lemmas. They are an adaptation of Lemma 1.1.3 in \cite{Milstein2004}, tailored to the considered local context. 

Before stating them, note that for any $i=1,\ldots,n$, any $R>0$, and any 
$x,y\in B_R(0)$, the following representation holds
\begin{equation}
\label{eq:pres-Z}
    X^{t_{i-1},x}_{t_i\wedge\rho_R^{t_{i-1}}}-X^{t_{i-1},y}_{t_i\wedge\rho_R^{t_{i-1}}}=x-y+Z,
\end{equation}
where
$$
Z=\int_{t_{i-1}}^{t_i\wedge\rho_R^{t_{i-1}}}\big( \mathbf{b}(s,X^{t_{i-1},x}_s)-\mathbf{b}(s,X^{{t_{i-1}},y}_s) \big)ds
+\int_{t_{i-1}}^{t_i\wedge\rho_R^{t_{i-1}}}\big( \bsigma(s,X^{{t_{i-1}},x}_s)-\bsigma(s,X^{t_{i-1},y}_s) \big)dW_s.
$$

\begin{lemma}
\label{lem:diffXxy}
Assume that the coefficients $\mathbf{b}$ and $\bsigma$ of \eqref{eq:SDE-gene-2} are locally Lipschitz and there exists a unique strong solution $X$ to \eqref{eq:SDE-gene-2}. Then, for any $i=1,\ldots,n$, any $R>0$, and any 
$x,y\in B_R(0)$, it holds that
\begin{equation}
\label{eq:diff-X-startingpoints}
    \E \left[ \big|
    X^{t_{i-1},x}_{t_i\wedge\rho_R^{t_{i-1}}}-X^{t_{i-1},y}_{t_i\wedge\rho_R^{t_{i-1}}}\big|^2 \right] \leq |x-y|^2(1+C_R\Delta), 
\end{equation}
where the constant $C_R>0$ depends on $R$, $\mathbf{b}$ and $\bsigma$.
\end{lemma}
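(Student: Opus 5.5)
We follow the scheme of Lemma 1.1.3 in \cite{Milstein2004}, with one change. Since the coefficients are only locally Lipschitz, every integrand is controlled through the stopping time $\rho_R^{t_{i-1}}$: it is built from both solutions, so on $[t_{i-1},\rho_R^{t_{i-1}})$ both $X^{t_{i-1},x}$ and $X^{t_{i-1},y}$ lie in $B_R(0)$.

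We start from the representation \eqref{eq:pres-Z} and write $\tau:=t_i\wedge\rho_R^{t_{i-1}}$. Expanding the square gives
\begin{equation*}
\E\big[|x-y+Z|^2\big]=|x-y|^2+2\,\E\big[(x-y)^T Z\big]+\E\big[|Z|^2\big].
\end{equation*}
Denote by $Z_{\mathrm{d}}$ and $Z_{\mathrm{s}}$ the $ds$- and $dW_s$-parts of $Z$. For $s<\tau$ both solutions lie in $B_R(0)$, so the local Lipschitz bound \eqref{eq:def-local-lip} with constant $C_R$ applies to all integrands.

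\textbf{Step 1: the cross term.} The stochastic integral is stopped at a bounded stopping time and its integrand is bounded on $[t_{i-1},\tau)$ (by $2\sup_{[0,T]\times B_R(0)}|\bsigma|$, which is finite, since \eqref{eq:def-local-lip} and a bound on $|\bsigma(t,0)|$ give it). Hence it is a true martingale and $\E[(x-y)^T Z_{\mathrm{s}}]=0$. For the drift part,
\begin{equation*}
|\E[(x-y)^TZ_{\mathrm{d}}]|\leq |x-y|\,C_R\int_{t_{i-1}}^{t_i}\E\big[|X^{t_{i-1},x}_{s\wedge\tau}-X^{t_{i-1},y}_{s\wedge\tau}|\big]\,ds .
\end{equation*}

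\textbf{Step 2: the quadratic term.} By Cauchy--Schwarz, Itô's isometry and \eqref{eq:def-local-lip},
\begin{equation*}
\E|Z|^2\leq 2(\Delta+1)C_R^2\int_{t_{i-1}}^{t_i}\E\big[|X^{t_{i-1},x}_{s\wedge\tau}-X^{t_{i-1},y}_{s\wedge\tau}|^2\big]\,ds .
\end{equation*}

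\textbf{Step 3: closing the estimate.} We need the a priori bound
\begin{equation*}
\E\big[|X^{t_{i-1},x}_{s\wedge\tau}-X^{t_{i-1},y}_{s\wedge\tau}|^2\big]\leq C_R|x-y|^2,\qquad s\in[t_{i-1},t_i].
\end{equation*}
To get it, we apply the same decomposition on $[t_{i-1},s\wedge\tau]$ with the crude bound $|x-y+Z|^2\le 2|x-y|^2+2|Z|^2$. This yields the function $g(s)=\E[|X^{t_{i-1},x}_{s\wedge\tau}-X^{t_{i-1},y}_{s\wedge\tau}|^2]$, which is finite because both stopped processes stay in $\overline{B_R(0)}$. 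It satisfies the integral inequality
\begin{equation*}
g(s)\leq 2|x-y|^2+C_R\int_{t_{i-1}}^s g(r)\,dr,
\end{equation*}
and Grönwall's inequality, using $\Delta<1$, gives the claim. Substituting this bound into Steps 1 and 2 (with Jensen, $\E|\cdot|\le(\E|\cdot|^2)^{1/2}$, in Step 1) gives
\begin{equation*}
|2\E[(x-y)^TZ]|+\E|Z|^2\leq C_R|x-y|^2\Delta,
\end{equation*}
which is \eqref{eq:diff-X-startingpoints}.

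The main obstacle is justifying the use of the integrands up to the stopping time. Strictly speaking $|X_{\tau}|$ may equal $R$, and the integrands on the open interval $[t_{i-1},\tau)$ are controlled only by continuity of paths. We therefore apply \eqref{eq:def-local-lip} on the closed ball, or on $B_{R+1}(0)$, where it holds with a slightly larger constant that is still denoted $C_R$. A second point to check is the martingale property of the stopped stochastic integral, which we obtain from the boundedness of $\bsigma$ on compacts.
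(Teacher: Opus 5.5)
Your argument is correct, and your reading of $\rho_R^{t_{i-1}}$ as the joint exit time of $X^{t_{i-1},x}$ and $X^{t_{i-1},y}$ is the one the paper uses implicitly. Your route, however, differs from the paper's.

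\textbf{The paper's route.} It applies It\^o's formula to $|\cdot|^2$ along the stopped difference process $D_t:=X^{t_{i-1},x}_{t\wedge\rho_R^{t_{i-1}}}-X^{t_{i-1},y}_{t\wedge\rho_R^{t_{i-1}}}$. After the $dW$-term is discarded, this gives $\E|D_t|^2=|x-y|^2+\E\int_{t_{i-1}}^{t\wedge\rho_R^{t_{i-1}}}\bigl(2D_s^T\delta\mathbf{b}_s+|\delta\bsigma_s|^2\bigr)ds$, where $\delta\mathbf{b}_s,\delta\bsigma_s$ denote the coefficient differences. The local Lipschitz bound closes this into $g(t)\le|x-y|^2+2C_R\int_{t_{i-1}}^t g(s)\,ds$ for $g(t)=\E|D_t|^2$. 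A single Gr\"onwall step then gives $g\le e^{2C_R\Delta}|x-y|^2\le(1+C_R'\Delta)|x-y|^2$.

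\textbf{Your route.} You expand $|x-y+Z|^2$ only at the terminal time. The leading coefficient $1$ therefore comes from the algebraic identity rather than from Gr\"onwall. You then need a separate crude Gr\"onwall pass ($g\le 2|x-y|^2+\dots$) to control the cross and quadratic terms. Its lossy constant is harmless because those terms already carry a factor $\Delta$.

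\textbf{What each buys.} The paper's route is shorter and produces the pointwise estimate \eqref{eq:groenwall_fund}, from which it then derives Lemma~\ref{lem:Zxy}. Yours avoids It\^o's formula for the squared norm; it uses only the zero mean of the stopped stochastic integral and It\^o's isometry. Your Steps 2--3 also already contain the bound $\E|Z|^2\le C_R|x-y|^2\Delta$ of Lemma~\ref{lem:Zxy}. You thus obtain both lemmas at once, in the reverse order of deduction.

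\textbf{A small correction.} To make the stopped stochastic integral a true martingale you do not need a bound on $|\bsigma(t,0)|$, which the standing assumptions do not provide uniformly in $t$. For $s<\rho_R^{t_{i-1}}$ both solutions lie in the open ball, so \eqref{eq:def-local-lip} alone gives $|\bsigma(s,X^{t_{i-1},x}_s)-\bsigma(s,X^{t_{i-1},y}_s)|\le 2RC_R$. The single instant $\rho_R^{t_{i-1}}$ does not affect either the $ds$- or the $dW_s$-integral, so the enlargement to $B_{R+1}(0)$ is also unnecessary.
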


\begin{proof}
Using Itô's formula for the function $x\mapsto|x|^2$ and the 
 process
 $\big(X^{{t_{i-1}},x}_{t\wedge\rho_R^{t_{i-1}}}-X^{t_{i-1},y}_{t\wedge\rho_R^{t_{i-1}}}\big)_{t_{i-1}\leq t\leq t_i}$, for any $t \in (t_{i-1},t_i]$, one has 
\begin{align*}
    \E \left[ \big|
    X^{t_{i-1},x}_{t\wedge\rho_R^{t_{i-1}}}-X^{t_{i-1},y}_{t\wedge\rho_R^{t_{i-1}}}\big|^2 \right]  &= \ds \E\Big[ |x-y|^2 \\
    & \qquad \ds +2\int_{t_{i-1}}^{t\wedge\rho_R^{t_{i-1}}}(X^{t_{i-1},x}_s-X^{t_{i-1},y}_s)^T\big( \mathbf{b}(s,X^{t_{i-1},x}_s)-\mathbf{b}(t_{i-1},X^{s,y}_s) \big)ds
    \\
    & \qquad  \ds + 2\int_{t_{i-1}}^{t\wedge\rho_R^{t_{i-1}}}(X^{t_{i-1},x}_s-X^{t_{i-1},y}_s)^T\big( \bsigma(s,X^{t_{i-1},x}_s)-\bsigma(s,X^{t_{i-1},y}_s) \big)dW_s
    \\
    & \qquad  \ds +\int_{t_{i-1}}^{t\wedge\rho_R^{t_{i-1}}}\big| \bsigma(s,X^{t_{i-1},x}_s)-\bsigma(s,X^{t_{i-1},y}_s) \big|^2ds\Big].
\end{align*}
Note that in the above expression we have used the matrix type notation for the stochastic integral terms (namely 
$\int Y_s^T\Sigma_sdW_s=\sum_{kj}\int Y^k_s\Sigma^{kj}_sdW^j_s$). For any 
$t_{i-1}\leq s\leq \rho_R^{t_{i-1}}$, the integrands in those terms are bounded, as 
$X_s$ is in $B_R(0)$ and $\bsigma$ is Lipschitz continuous on this ball w.r.t. the space variable. Thus, the second right-hand side term inside the brackets in the above equation may be seen  as a square integrable martingale, stopped at stopping time $\rho_R^{t_{i-1}}$, and taken at time $t$: by the stopping theorem (see Corollary II.3.6 in \cite{revuz1999}) this is a square-integrable martingale, taken at time $t$, starting at zero at time $t_{i-1}$. Thus, the expectation of  the second right-hand side term inside the brackets vanishes.

Then, from the local Lipschitz condition \eqref{eq:def-local-lip} we get 
\begin{align*}
\E \left[ \big|
    X^{t_{i-1},x}_{t\wedge\rho_R^{t_{i-1}}}-X^{t_{i-1},y}_{t\wedge\rho_R^{t_{i-1}}}\big|^2 \right]
    &\leq \ds |x-y|^2+2C_R\E \left[ \int_{t_{i-1}}^{t\wedge\rho_R^{t_{i-1}}}\big|X^{t_{i-1},x}_s-X^{t_{i-1},y}_s\big|^2ds \right] 
    \\
    &\leq \ds|x-y|^2+2C_R\E \left[ \int_{t_{i-1}}^{t}\big|X^{t_{i-1},x}_{s\wedge\rho_R^{t_{i-1}}}-X^{t_{i-1},y}_{s\wedge\rho_R^{t_{i-1}}}\big|^2ds \right] 
    \\
    &\leq \ds|x-y|^2+2C_R\int_{t_{i-1}}^{t}\E \left[ \big|X^{t_{i-1},x}_{s\wedge\rho_R^{t_{i-1}}}-X^{t_{i-1},y}_{s\wedge\rho_R^{t_{i-1}}}\big|^2 \right] ds,
\end{align*}
where the last inequality follows from Fubini's theorem.

Using now Gr\"onwall's inequality, we get
\begin{equation}\label{eq:groenwall_fund}
     \E \left[ \big|X^{t_{i-1},x}_{t\wedge\rho_R^{t_{i-1}}}-X^{t_{i-1},y}_{t\wedge\rho_R^{t_{i-1}}}\big|^2 \right] \leq |x-y|^2e^{2C_R\Delta}, \qquad \forall t \in (t_{i-1},t_i],
\end{equation}
which leads to \eqref{eq:diff-X-startingpoints}, since $\Delta\in (0,1)$.
\end{proof}

\begin{lemma}
\label{lem:Zxy}
Assume that the coefficients $\mathbf{b}$ and $\bsigma$ of \eqref{eq:SDE-gene-2} are locally Lipschitz and there exists a unique strong solution $X$ to \eqref{eq:SDE-gene-2}. Then, for any $i=1,\ldots,n$, any $R>0$, and any 
$x,y\in B_R(0)$, it holds that
$$
X^{t_{i-1},x}_{t_i\wedge\rho_R^{t_{i-1}}}-X^{t_{i-1},y}_{t_i\wedge\rho_R^{t_{i-1}}}=x-y+Z,  
$$
with
$$
\E \left[\big|Z\big|^2\right] \leq C_R|x-y|^2\Delta,
$$
where the constant $C_R>0$ depends on $R$, $\mathbf{b}$ and $\bsigma$.
\end{lemma}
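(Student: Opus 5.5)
The representation $X^{t_{i-1},x}_{t_i\wedge\rho_R^{t_{i-1}}}-X^{t_{i-1},y}_{t_i\wedge\rho_R^{t_{i-1}}}=x-y+Z$ is exactly \eqref{eq:pres-Z}, so only the bound on $\E[|Z|^2]$ needs proof. I would split $Z=Z_1+Z_2$, with $Z_1$ the stopped drift integral and $Z_2$ the stopped stochastic integral, and use $\E[|Z|^2]\leq 2\E[|Z_1|^2]+2\E[|Z_2|^2]$.

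For the drift part, I apply Cauchy--Schwarz in time: since the integration interval has length at most $\Delta$,
\begin{equation*}
\E\big[|Z_1|^2\big]\leq \Delta\,\E\Big[\int_{t_{i-1}}^{t_i\wedge\rho_R^{t_{i-1}}}\big|\mathbf{b}(s,X^{t_{i-1},x}_s)-\mathbf{b}(s,X^{t_{i-1},y}_s)\big|^2ds\Big].
\end{equation*}
For $s<\rho_R^{t_{i-1}}$ both processes lie in $B_R(0)$, so the local Lipschitz condition \eqref{eq:def-local-lip} bounds the integrand by $C_R^2|X^{t_{i-1},x}_{s\wedge\rho_R^{t_{i-1}}}-X^{t_{i-1},y}_{s\wedge\rho_R^{t_{i-1}}}|^2$. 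Then I extend the integral to $[t_{i-1},t_i]$, apply Fubini, and use the Grönwall bound \eqref{eq:groenwall_fund} from the proof of Lemma~\ref{lem:diffXxy}, namely $\E|\cdot|^2\le |x-y|^2e^{2C_R\Delta}\le e^{2C_R}|x-y|^2$ uniformly in $s\in(t_{i-1},t_i]$. This gives $\E[|Z_1|^2]\leq C_R|x-y|^2\Delta^2\le C_R|x-y|^2\Delta$, since $\Delta<1$.

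For the stochastic part, I write the stopped integral as $\int_{t_{i-1}}^{t_i}\1_{s\le\rho_R^{t_{i-1}}}(\cdots)\,dW_s$. Its integrand is bounded because the processes stay in $B_R(0)$ before the stopping time and $\bsigma$ is Lipschitz there, hence bounded on $[0,T]\times \overline{B_R(0)}$. So this is a square-integrable martingale increment, and Itô's isometry (as justified in Lemma~\ref{lem:diffXxy}) gives
\begin{equation*}
\E\big[|Z_2|^2\big]=\E\Big[\int_{t_{i-1}}^{t_i\wedge\rho_R^{t_{i-1}}}\big|\bsigma(s,X^{t_{i-1},x}_s)-\bsigma(s,X^{t_{i-1},y}_s)\big|^2ds\Big].
\end{equation*}
Applying the Lipschitz bound, Fubini and \eqref{eq:groenwall_fund} once more yields $\E[|Z_2|^2]\le C_R^2e^{2C_R}|x-y|^2\Delta$. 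This term determines the order $\Delta$ rather than $\Delta^2$.

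The only delicate point is the stopping. One must use the stopped processes $X_{s\wedge\rho_R^{t_{i-1}}}$ so that the moment bound \eqref{eq:groenwall_fund} applies, while restricting the integrand to $s\le\rho_R^{t_{i-1}}$ so that the Lipschitz constant on $B_R(0)$ is valid. At $s=\rho_R^{t_{i-1}}$ the norm $|X|$ equals $R$, which lies in the closed ball, so one should interpret \eqref{eq:def-local-lip} on $\overline{B_R(0)}$, for instance by enlarging $R$ slightly. This affects only the constant, and otherwise the argument is routine.
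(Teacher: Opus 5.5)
Your proof is correct and follows the paper's argument: Hölder in time for the drift, an $L^2$ bound for the stopped stochastic integral (you use Itô's isometry where the paper cites Burkholder--Davis--Gundy), the local Lipschitz condition on the stopped processes, Fubini, and the Grönwall estimate \eqref{eq:groenwall_fund}. Two small remarks. First, boundedness of the stochastic integrand follows directly from the Lipschitz bound on the difference $\bsigma(s,X^{t_{i-1},x}_s)-\bsigma(s,X^{t_{i-1},y}_s)$; \eqref{eq:def-local-lip} does not by itself make $\bsigma$ bounded on $[0,T]\times\overline{B_R(0)}$. Second, the closed-ball concern is moot, because the single time $s=\rho_R^{t_{i-1}}$ is Lebesgue-null in the time integrals.
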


\begin{proof}
Recall the representation \eqref{eq:pres-Z}. Then, we have 
\begin{align*}
&\E \left[ \big|Z\big|^2 \right] \\
& \quad \leq \ds c\E \left[ \int_{t_{i-1}}^{t_i\wedge\rho_R^{t_{i-1}}}\big| \mathbf{b}(s,X^{t_{i-1},x}_s)-\mathbf{b}(s,X^{t_{i-1},y}_s) \big|^2ds
+ \int_{t_{i-1}}^{t_i\wedge\rho_R^{t_{i-1}}}\big| \bsigma(s,X^{t_{i-1},x}_s)-\bsigma(s,X^{t_{i-1},y}_s) \big|^2ds \right]
\\
& \quad \leq \ds c'\E \left[ \int_{t_{i-1}}^{t_i}\big| \mathbf{b}(s,X^{t_{i-1},x}_{s\wedge\rho_R^{t_{i-1}}})-\mathbf{b}(s,X^{t_{i-1},y}_{s\wedge\rho_R^{t_{i-1}}}) \big|^2ds
+ \int_{t_{i-1}}^{t_i}\big| \bsigma(s,X^{t_{i-1},x}_{s\wedge\rho_R^{t_{i-1}}})-\bsigma(s,X^{t_{i-1},y}_{s\wedge\rho_R^{t_{i-1}}}) \big|^2ds \right] 
\\
& \quad \leq  \ds C_R\, \int_{t_{i-1}}^{t_i} \E \left[ \big|X^{t_{i-1},x}_{s\wedge\rho_R^{t_{i-1}}}-X^{t_{i-1},y}_{s\wedge\rho_R^{t_{i-1}}}\big|^2 \right] ds.
\end{align*}

At the first inequality we have used the Hölder and Burkholder-Davis-Gundy inequalities, and at the third inequality we have used the local Lipschitz condition \eqref{eq:def-local-lip}. Using \eqref{eq:groenwall_fund}, we obtain the~result.
\end{proof}


Towards the end of the proof of Theorem \ref{thm:conv-speed-loc}
 we will use the same discrete Gr\"onwall lemma as in \cite{Milstein2004}. We recast this lemma below, for the sake of completeness.

 \begin{lemma}[Discrete Gr\"onwall lemma]
 \label{lem:discr-gronwall}
Let $(\epsilon_i)_{i=0}^n$ be a sequence of non-negative real numbers satisfying
\begin{equation}
    \label{eq:hyp-disgronwall-lemma}
\epsilon_i\leq (1+A\Delta)\epsilon_{i-1}+B\Delta^p,\quad \forall i=1,\ldots,n,
\end{equation}
where $A,B\geq 0$ and $p\geq 1$ (we recall that $\Delta=T/n$). Then,
$$
\epsilon_i\leq e^{AT}\epsilon_0+\frac B A(e^{AT}-1)\Delta^{p-1}, \quad\forall i=0,\ldots,n,
$$
where for $A=0$, $(e^{AT}-1)/A$ is equal to $0$ by convention.
 \end{lemma}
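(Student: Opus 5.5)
We argue by induction on $i$, working with the unrolled recursion $\epsilon_i\le (1+A\Delta)\epsilon_{i-1}+B\Delta^p$ rather than with continuous-time tools. Iterating the hypothesis \eqref{eq:hyp-disgronwall-lemma} $i$ times gives
\begin{equation*}
\epsilon_i\leq (1+A\Delta)^i\epsilon_0+B\Delta^p\sum_{k=0}^{i-1}(1+A\Delta)^k ,\qquad i=0,\ldots,n.
\end{equation*}
This is a direct induction: the case $i=0$ is trivial. For the step from $i-1$ to $i$, multiply the bound for $\epsilon_{i-1}$ by $1+A\Delta\geq 0$, which preserves the inequality, and add $B\Delta^p$. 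The non-negativity of $A$, $B$ and of the $\epsilon_i$ is exactly what makes this monotone substitution legitimate.

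Next we bound each term. Since $1+x\le e^x$ and $i\Delta\le n\Delta=T$, we have $(1+A\Delta)^i\le e^{Ai\Delta}\le e^{AT}$, which controls the first term.

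For the geometric sum, first treat $A>0$. Then
\begin{equation*}
\sum_{k=0}^{i-1}(1+A\Delta)^k=\frac{(1+A\Delta)^i-1}{A\Delta}\le\frac{e^{AT}-1}{A\Delta}.
\end{equation*}
Multiplying by $B\Delta^p$ gives $\frac BA(e^{AT}-1)\Delta^{p-1}$, as required.

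When $A=0$, the same sum equals $i$, so the second term is $B i\Delta^p\le BT\Delta^{p-1}$. This is the limit of $\frac BA(e^{AT}-1)$ as $A\downarrow 0$. The stated convention that this quantity equals $0$ when $A=0$ would only be correct if $B=0$. I would therefore flag it and read the statement with the continuous-extension value $BT$ in that case (or require $B=0$ there). Apart from this convention point, nothing is delicate: the only step needing care is the inequality $(1+A\Delta)^i\le e^{AT}$, which relies on $i\le n$.
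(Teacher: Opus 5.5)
Your proof is correct. It is the standard argument: unroll the recursion, sum the geometric series, and use $1+x\le e^x$ together with $i\Delta\le T$. The paper gives no proof of its own and only cites \cite{Milstein2004}, so there is nothing different to compare against.

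You are also right to flag the $A=0$ convention. With $A=0$ and $B>0$, the choice $\epsilon_0=0$, $\epsilon_1=B\Delta^p$ satisfies the hypothesis with equality but violates the stated bound $\epsilon_1\le 0$. The convention should therefore read $(e^{AT}-1)/A:=T$, the limiting value. This does not affect the paper, since the lemma is only applied with $A=C_R>0$.

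One small correction to your commentary: the induction step uses only $1+A\Delta\ge 0$, not the non-negativity of the $\epsilon_i$. What you actually need is $\epsilon_0\ge 0$ and $B\ge 0$, when you replace $(1+A\Delta)^i$ and the geometric sum by their upper bounds.
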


\paragraph*{Proof of the theorem.}

We are now in position to proceed, in several steps, to the proof of Theorem \ref{thm:conv-speed-loc} itself. The idea is to set $\varepsilon_i^2= \E\big[ |X_{t_i}-\overline{X}_{t_i}|^2 \1_{t_i<\rho_R}  \big]$ and to get a relation of type
 \eqref{eq:hyp-disgronwall-lemma} for the sequence $(\varepsilon_i^2)$. The result will then follow from Lemma \ref{lem:discr-gronwall}.  

\vspace{0.3cm}

{\it Step 1: Conditioning.} Note that, for any Borel bounded function $f$ the quantity $\E [f (X^{s,x}_t)]$ may be seen as the conditional expectation $\E[f(X_t)\,|\,X_s=x]$. Thus, we denote here the random variable $\E[f(X_t)\,|\,X_s]$ as $\E [f (X^{s,x}_t)]_{x=X_s}$.
Let $1\leq i\leq n$. Then, we have
\begin{equation}\label{eq:cond-step1}
\begin{aligned} 
 \E\left[\,\big|
    X_{t_i}-\overline{X}_{t_i}\big|^2\1_{t_i<\rho_R}\,\right]&=
    \E\left[\E\big[\,\big|
    X_{t_i}-\overline{X}_{t_i}\big|^2\1_{t_{i-1}<\rho_R}\1_{t_{i}<\rho^{t_{i-1}}_R}\,\big|\,\cF_{t_{i-1}}\big]\right]\\[-4mm]
    \\
    &=\E\left[\1_{t_{i-1}<\rho_R}\E\big[\,\big|
    X_{t_i\wedge\rho^{t_{i-1}}_R}-\overline{X}_{t_i\wedge\rho^{t_{i-1}}_R}\big|^2\1_{t_i<\rho^{t_{i-1}}_R}\,\big|\,\cF_{t_{i-1}}\big]\right]\\
    \\[-4mm]
    &=  \E\left[\1_{t_{i-1}<\rho_R}\E\big[\,\big|
    X_{t_i\wedge\rho^{t_{i-1}}_R}-\overline{X}_{t_i\wedge\rho^{t_{i-1}}_R}\big|^2\1_{t_i<\rho^{t_{i-1}}_R}\,\big|\,X_{t_{i-1}},
    \overline{X}_{t_{i-1}}\big]\right]\\[-4mm]
    \\
    &=  \E\left[\1_{t_{i-1}<\rho_R}\E\Big[\,\big|
    X^{t_{i-1},x}_{t_i\wedge\rho^{t_{i-1}}_R}-\overline{X}^{t_{i-1},\bar{x}}_{t_i\wedge\rho^{t_{i-1}}_R}\big|^2\1_{t_i<\rho^{t_{i-1}}_R}\,\Big]_{x=X_{t_{i-1}},
    \bar{x}=\overline{X}_{t_{i-1}}}\right].\\
\end{aligned}
\end{equation}
At the first equality we have used the fact that
${\{t_i<\rho_R\}}={\{t_{i-1}<\rho_R\}}\cap{\{t_{i}<\rho^{t_{i-1}}_R\}}$. At the second inequality we have used $\{ \rho_R > t_{i-1} \} \in \mathcal{F}_{t_{i-1}}$. At the third equality we have used the Markov property for $X$ and the structure of $\overline{X}$ (see \eqref{eq:def-scheme-gene} and the comments hereafter). Note that on the event $\{t_{i-1}<\rho_R\}$ both $X_{t_{i-1}}$ and $\overline{X}_{t_{i-1}}$ are in $B_R(0)$.

\vspace{0.1cm}
{\it Step 2: Decomposition of the error.} We thus aim at controlling,
for $x,\bar{x}\in B_R(0)$, the error

\begin{equation}
\begin{aligned}
& \E\Big[\,\big|
    X^{t_{i-1},x}_{t_i\wedge\rho^{t_{i-1}}_R}-\overline{X}^{t_{i-1},\bar{x}}_{t_i\wedge\rho^{t_{i-1}}_R}\big|^2\1_{t_i<\rho^{t_{i-1}}_R}\,\Big] \\[-4mm] \\
& \qquad =\E\Big[ \,\big|
X^{t_{i-1},x}_{t_i\wedge\rho^{t_{i-1}}_R}
-X^{t_{i-1},\bar{x}}_{t_i\wedge\rho^{t_{i-1}}_R}
+X^{t_{i-1},\bar{x}}_{t_i\wedge\rho^{t_{i-1}}_R}
-\overline{X}^{t_{i-1},\bar{x}}_{t_i\wedge\rho^{t_{i-1}}_R}
\big|^2 \1_{t_i<\rho^{t_{i-1}}_R}\, \Big]\\[-4mm] \\
& \qquad =\E\Big[ \,\big| 
X^{t_{i-1},x}_{t_i\wedge\rho^{t_{i-1}}_R}
-X^{t_{i-1},\bar{x}}_{t_i\wedge\rho^{t_{i-1}}_R}
\big|^2 \1_{t_i<\rho^{t_{i-1}}_R}\, \Big]
+\,\E\Big[ \,
\big|X^{t_{i-1},\bar{x}}_{t_i\wedge\rho^{t_{i-1}}_R}
-\overline{X}^{t_{i-1},\bar{x}}_{t_i\wedge\rho^{t_{i-1}}_R}\big|^2
\1_{t_i<\rho^{t_{i-1}}_R}\, 
\Big]\\[-4mm] \\
&\qquad \qquad +\,2\E\Big[ \,(
X^{t_{i-1},x}_{t_i\wedge\rho^{t_{i-1}}_R}
-X^{t_{i-1},\bar{x}}_{t_i\wedge\rho^{t_{i-1}}_R}
)^T\,
(  X^{t_{i-1},\bar{x}}_{t_i\wedge\rho^{t_{i-1}}_R}
-\overline{X}^{t_{i-1},\bar{x}}_{t_i\wedge\rho^{t_{i-1}}_R}) \1_{t_i<\rho^{t_{i-1}}_R}\, \Big].\\
\end{aligned}
\end{equation}

\vspace{0.1cm}
{\it Step 3: Treating each term.}
Using Lemma \ref{lem:diffXxy}, for the first term we get directly
\begin{equation}
\label{eq:step2a}
    \E\Big[ \,\big| 
X^{t_{i-1},x}_{t_i\wedge\rho^{t_{i-1}}_R}
-X^{t_{i-1},\bar{x}}_{t_i\wedge\rho^{t_{i-1}}_R}
\big|^2 \1_{t_i<\rho^{t_{i-1}}_R}\, \Big]\leq 
\E\Big[ \,\big| 
X^{t_{i-1},x}_{t_i\wedge\rho^{t_{i-1}}_R}
-X^{t_{i-1},\bar{x}}_{t_i\wedge\rho^{t_{i-1}}_R}
\big|^2 \, \Big]
\leq 
(1+C_R\Delta) |x-\bar{x}|^2.
\end{equation}

\vspace{0.3cm}
The second term is also treated directly, thanks to the assumption that the scheme is consistent of order~$q$ locally on the ball $B_R(0)$ (in the mean-square sense). Specifically, one has
\begin{equation}
\label{eq:step2b}
\E\Big[ \,
\big|X^{t_{i-1},\bar{x}}_{t_i\wedge\rho^{t_{i-1}}_R}
-\overline{X}^{t_{i-1},\bar{x}}_{t_i\wedge\rho^{t_{i-1}}_R}\big|^2
\1_{t_i<\rho^{t_{i-1}}_R}\, 
\Big]\leq C_R\Delta^{2q}.
\end{equation}

\vspace{0.2cm}
Recall that $X^{t_{i-1},x}_{t_i\wedge\rho^{t_{i-1}}_R}
-X^{t_{i-1},\bar{x}}_{t_i\wedge\rho^{t_{i-1}}_R}=x-\bar{x}+Z$, where $Z$ is as in~\eqref{eq:pres-Z}. Thus, for the third term we have
\begin{equation}
\begin{array}{l}
\E\Big[ \,(
X^{t_{i-1},x}_{t_i\wedge\rho^{t_{i-1}}_R}
-X^{t_{i-1},\bar{x}}_{t_i\wedge\rho^{t_{i-1}}_R}
)^T\,
(  X^{t_{i-1},\bar{x}}_{t_i\wedge\rho^{t_{i-1}}_R}
-\overline{X}^{t_{i-1},\bar{x}}_{t_i\wedge\rho^{t_{i-1}}_R}) \1_{t_i<\rho^{t_{i-1}}_R}\, \Big]\\
\\
\hspace{1cm}=
\E\Big[ \,(
x-\bar{x}
)^T\,
(  X^{t_{i-1},\bar{x}}_{t_i\wedge\rho^{t_{i-1}}_R}
-\overline{X}^{t_{i-1},\bar{x}}_{t_i\wedge\rho^{t_{i-1}}_R}) \1_{t_i<\rho^{t_{i-1}}_R}\, \Big]
+\E\Big[ \,Z^T\,
(  X^{t_{i-1},\bar{x}}_{t_i\wedge\rho^{t_{i-1}}_R}
-\overline{X}^{t_{i-1},\bar{x}}_{t_i\wedge\rho^{t_{i-1}}_R}) \1_{t_i<\rho^{t_{i-1}}_R}\, \Big].
\end{array}
\end{equation}
Using the Minkowski and Jensen inequalities, we obtain
$$
   \Big| \,\E\Big[ \,(
x-\bar{x}
)^T\,
(  X^{t_{i-1},\bar{x}}_{t_i\wedge\rho^{t_{i-1}}_R}
-\overline{X}^{t_{i-1},\bar{x}}_{t_i\wedge\rho^{t_{i-1}}_R}) \1_{t_i<\rho^{t_{i-1}}_R}\, \Big]  
\,\Big|
\leq  |x-\bar{x}|\Big(  \E\Big[ \,
\big|X^{t_{i-1},\bar{x}}_{t_i\wedge\rho^{t_{i-1}}_R}
-\overline{X}^{t_{i-1},\bar{x}}_{t_i\wedge\rho^{t_{i-1}}_R}\big|^2
\, 
\Big] \Big)^{1/2}.
$$
Thus, local consistency yields 
\begin{equation}
\label{eq:stepc1}
    \Big| \,\E\Big[ \,(
x-\bar{x}
)^T\,
(  X^{t_{i-1},\bar{x}}_{t_i\wedge\rho^{t_{i-1}}_R}
-\overline{X}^{t_{i-1},\bar{x}}_{t_i\wedge\rho^{t_{i-1}}_R}) \1_{t_i<\rho^{t_{i-1}}_R}\, \Big]  
\,\Big|\leq |x-\bar{x}|C_R\Delta^q.
\end{equation}
Moreover, using the Cauchy-Schwarz inequality, one has
$$
\Big|
\E\Big[ \,Z^T\,
(  X^{t_{i-1},\bar{x}}_{t_i\wedge\rho^{t_{i-1}}_R}
-\overline{X}^{t_{i-1},\bar{x}}_{t_i\wedge\rho^{t_{i-1}}_R}) \1_{t_i<\rho^{t_{i-1}}_R}\, \Big]
\Big|
\leq \Big(\E \left[|Z|^2\right]\Big)^{1/2}\\
\,\Big(
\E\Big[ \,
\big|X^{t_{i-1},\bar{x}}_{t_i\wedge\rho^{t_{i-1}}_R}
-\overline{X}^{t_{i-1},\bar{x}}_{t_i\wedge\rho^{t_{i-1}}_R}\big|^2
\, 
\Big]
\Big)^{1/2}.
$$
Using then Lemma \ref{lem:Zxy} and local consistency, we get
 \begin{equation}
\label{eq:stepc2}
     \Big|
\E\Big[ \,Z^T\,
(  X^{t_{i-1},\bar{x}}_{t_i\wedge\rho^{t_{i-1}}_R}
-\overline{X}^{t_{i-1},\bar{x}}_{t_i\wedge\rho^{t_{i-1}}_R}) \1_{t_i<\rho^{t_{i-1}}_R}\, \Big]
\Big|
\leq 
C_R\,\Delta^{q+\frac12}|x-\bar{x}|.
 \end{equation}
Taking into account \eqref{eq:stepc1} and \eqref{eq:stepc2} we finally get
\begin{equation}
    \label{eq:step2c}
    \E\Big[ \,(
X^{t_{i-1},x}_{t_i\wedge\rho^{t_{i-1}}_R}
-X^{t_{i-1},\bar{x}}_{t_i\wedge\rho^{t_{i-1}}_R}
)^T\,
(  X^{t_{i-1},\bar{x}}_{t_i\wedge\rho^{t_{i-1}}_R}
-\overline{X}^{t_{i-1},\bar{x}}_{t_i\wedge\rho^{t_{i-1}}_R}) \1_{t_i<\rho^{t_{i-1}}_R}\, \Big]
\leq 
C_R(1+\Delta^{1/2})\Delta^q|x-\bar{x}|.
\end{equation}

\vspace{0.2cm}
{\it Step 4: Putting all the pieces together and getting a relation of type \eqref{eq:hyp-disgronwall-lemma}.}

\vspace{0.2cm}
We set $\varepsilon_i^2=\E\big[ |X_{t_i}-\overline{X}_{t_i}|^2 \1_{t_i<\rho_R}  \big]$ for $i=0,\ldots,n$. Taking into account the decomposition in \textit{Step 2}, plugging the estimates
\eqref{eq:step2a}, \eqref{eq:step2b} and \eqref{eq:step2c} 
into \eqref{eq:cond-step1}, and using the Cauchy-Schwarz inequality (for the second term), 
we get
$$
\varepsilon_i^2\leq (1+C_R\Delta)\varepsilon_{i-1}^2+2C_R(1+\Delta^{1/2})\Delta^q\varepsilon_{i-1}+C_R\Delta^{2q}, \quad i=1,\ldots,n.
$$
Let $q'=q-\frac12$. We have, replacing the constant $C_R$ by another constant $C_R'$ depending also on~$R,T,\mathbf{b},\bsigma$ (for the relevant terms),
$$
\varepsilon_i^2\leq (1+C_R\Delta)\varepsilon_{i-1}^2+C_R'\Delta^{q'+\frac12}\varepsilon_{i-1}+C_R'\Delta^{2q'}, \quad i=1,\ldots,n.
$$
Using Young's inequality, we have 
$\Delta^{q'+\frac12}\varepsilon_{i-1}\leq \dfrac{\Delta^{2q'}}{2}+\dfrac{\Delta\varepsilon_{i-1}^2}{2}$, and thus (changing again the constant~$C_R$)
$$
\varepsilon_i^2\leq (1+C_R\Delta)\varepsilon_{i-1}^2+C_R\Delta^{2q'}, \quad i=1,\ldots,n.
$$

\vspace{0.2cm}
{\it Step 5: Concluding.} Using Lemma \ref{lem:discr-gronwall}, we then have
$$
\varepsilon_i^2\leq e^{C_RT}\varepsilon^2_0+(e^{C_RT}-1)\Delta^{2q'-1}, \quad i=0,\ldots,n.
$$
But as $\varepsilon_0=0$ (the process $X$ and the scheme $\overline{X}$ start from the same point), we get
$$
\varepsilon_i=\left( \E\big[ |X_{t_i}-\overline{X}_{t_i}|^2 \1_{t_i<\rho_R}  \big]  \right)^{1/2}\leq C_R\Delta^{q'-1/2},
$$
for any $i=0,\ldots,n$. Recalling that $q'=q-\frac12$ we conclude the proof of the theorem.

\begin{remark}
    In the original work of \cite{Milstein2004} (see their Theorem 1.1.1, p5) two orders of consistency are involved. An order of mean-square consistency, say $q_2$, which plays the same role as our order $q$, but also an order of "linear" consistency, say $q_1$. Indeed, in their study of the global Lipschitz case no indicator function $\1_{t_i<\rho_R^{t_{i-1}}}$ appears in a term analogous to~\eqref{eq:stepc1}. Thus, if there is an order of linear consistency, it allows treating this term in a specific fashion (mainly because expectations of Itô integrals vanish in this case). But in our computations, in the presence of the indicator function, one finally has to resort to the Cauchy-Schwarz inequality, and is led to use the mean-square order of accuracy $q$.
\end{remark}

\subsection{Proof of Theorem \ref{thm:conv-nospeed}}\label{ssec:proof_conv-nospeed}

To prove Theorem \ref{thm:conv-nospeed}, we proceed in a similar way as in Theorem 2.2 of \cite{Higham2002}. First, we observe that
\[
\sup_{i=0,\ldots,n}\E\big[ |X_{t_i}-\overline{X}_{t_i}|^2 \big] \leq \sup_{i=0,\ldots,n}\E\big[ |X_{t_i}-\overline{X}_{t_i}|^2\mathbbm{1}_{\rho_R>t_i} \big] + \sup_{i=0,\ldots,n}\E\big[ |X_{t_i}-\overline{X}_{t_i}|^2 \mathbbm{1}_{\rho_R\leq t_i}\big].
\]
Thanks to Theorem \ref{thm:conv-speed-loc}, the first term is controlled and satisfies
\[
\sup_{i=0,\ldots,n}\E\big[ |X_{t_i}-\overline{X}_{t_i}|^2\mathbbm{1}_{\rho_R>t_i} \big] \leq C_R\Delta^{2(q-1)}.
\]
Let us focus on the second term. We start by recalling Young's inequality, which states that for any $r,k>1$ with $r^{-1}+k^{-1}=1$, it holds that
\[
ab \leq \frac{\delta}{r}a^r+\frac{1}{k\delta^{k/r}}b^k, \qquad \forall a,b,\delta>0.
\]
Thus, by taking $r=p$ and $k= \frac{p}{p-1}$, one has
\begin{align*}
\sup_{i=0,\ldots,n}\E\big[|X_{t_i}-\overline{X}_{t_i}|^2 \mathbbm{1}_{\rho_R\leq t_i}\big] &\leq \frac{\delta}{p}\sup_{i=0,\ldots,n}\E\left[|X_{t_i}-\overline{X}_{t_i}|^{2p}  \right] + \frac{p-1}{p\delta^{1/(p-1)}}\sup_{i=0,\ldots,n}\mathbb{E}[\mathbbm{1}_{\rho_R\leq t_i}].
\end{align*}
Then, the elementary inequality $(x+y)^{2p}\leq 2^{2p-1}(x^{2p}+y^{2p})$, for all $x,y\geq 0$, together with the fact that $\mathbb{E}[\mathbbm{1}_{\rho_R\leq t_i}]=\P(\rho_R\leq t_i)\leq\P(\rho_R^X\leq t_i)+\P(\rho_R^{\overline{X}}\leq t_i)$, further gives that
\begin{align*}
\sup_{i=0,\ldots,n}\E\big[|X_{t_i}-\overline{X}_{t_i}|^2 \mathbbm{1}_{\rho_R\leq t_i}\big] &\leq \frac{2^{2p-1}\delta}{p} \sup_{i=0,\ldots,n}\E\left[|X_{t_i}|^{2p} + |\overline{X}_{t_i}|^{2p}  \right] \\ & \qquad + \frac{p-1}{p\delta^{1/(p-1)}}\sup_{i=0,\ldots,n}\big(\P\big( \rho^X_R\leq t_i\big)+\P\big( \rho^{\overline{X}}_R\leq t_i\big)\big).
\end{align*}
Now, since $X_{\rho_R^X} = R$ on the event $\{ \rho_R^X \leq t_i \}$, one has
\begin{equation*}
   \sup_{i=0,\ldots,n} \P(\rho_R^X\leq t_i) = \sup_{i=0,\ldots,n} \mathbb{E}\left[ \mathbbm{1}_{\rho_R^X\leq t_i} \frac{|{X}_{\rho^{{X}}_R}|^{2p} }{R^{2p}} \right] \leq \frac{1}{R^{2p}} \E\left[\max_{0\leq t\leq T}|{X}_{t}|^{2p}  \right],
\end{equation*}
and a similar result holds for the term $\sup_{i=0,\ldots,n}\P(\rho_R^{\overline{X}}\leq t_i)$. Therefore, we further obtain that
\begin{align*}
\sup_{i=0,\ldots,n}\E\big[|X_{t_i}-\overline{X}_{t_i}|^2 \mathbbm{1}_{\rho_R\leq t_i}\big] 
& \leq \left(\frac{2^{2p-1}\delta}{p} + \frac{p-1}{p\delta^{1/(p-1)}R^{2p}}\right) \left(\E\left[\max_{0\leq t\leq T}|X_{t}|^{2p}\right] + \E\left[\max_{0\leq t\leq T}|\overline{X}_{t}|^{2p}  \right]\right).
\end{align*}
Using the fact that the moments of both the true process and the numerical scheme are bounded by assumption, we can define the following constant: 
\[
A := \max\left\{ \E\left[\max_{0\leq t\leq T}|X_{t}|^{2p}\right], \E\left[\max_{0\leq t\leq T}|\overline{X}_{t}|^{2p}  \right]\right\}.
\]
Then, the upper bound for the accumulated error can be written as 
\begin{align*}
\sup_{i = 0,\dots, n}\E\big[|X_{t_i}-\overline{X}_{t_i}|^2 \big] \leq C_R\Delta^{2(q-1)} + \frac{2^{2p}A\delta}{p}+ \frac{(p-1)2A}{p\delta^{1/(p-1)}R^{2p}}.
\end{align*}
Given any $\epsilon>0$, we can choose $\delta$ such that $({2^{2p}A\delta})/{p} < {\epsilon}/{3}$. Then, $R$ can be  chosen such that 
\[
 \frac{(p-1)2A}{p\delta^{1/(p-1)}R^{2p}} < \frac{\epsilon}{3}.
\]
Finally, choose the time step $\Delta$ sufficiently small, such that $C_R\Delta^{2(q-1)}  < {\epsilon}/{3}.$
Therefore, for any $\epsilon > 0$ it is possible to choose $\Delta$ small enough, such that
\[
\sup_{i=0,\ldots,n}\E\big[ |X_{t_i}-\overline{X}_{t_i}|^2 \big] \leq \epsilon,
\]
which completes the proof of the theorem.

\section{Properties of the splitting methods}\label{sec:Splitting_properties}

In this section, we analyse the properties of the splitting methods introduced in Section \ref{section:splitting_schemes}. In~particular, we prove that they preserve the state space of the process (Section~\ref{sec:stateSpacePreservation_splitting}), have bounded moments (Section~\ref{sec:boundedMoments_splitting}), are locally consistent and globally mean-square convergent (Section~\ref{sec:mean_square_convergence_splitting}), and are geometrically ergodic (Section~\ref{sec:ergodicity_splitting}). The strong convergence result of Section~\ref{sec:mean_square_convergence_splitting} is based on the findings of Section \ref{sec:fund-thm}.

\subsection{State space preservation}\label{sec:stateSpacePreservation_splitting}

We start with the following lemma.

\begin{lemma}\label{lemma:subunits_stay_in_cube}
Let the function $\psi_{\textrm{D}}$ be as in \eqref{eq:sub2_BM_flow}. Then, for any $v \in \mathbb{R}$, $u \in [0,1]^d$, and $t>0$, it holds that $\psi_{\textrm{D}}(t;v,u) \in [0,1]^d$.
\end{lemma}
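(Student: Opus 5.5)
The plan is to exploit the componentwise (diagonal) structure of $\psi_{\textrm{D}}$ and reduce the claim to a one-dimensional convex-combination statement. Because the matrix $-\textbf{diag}(\balpha(v)+\bbeta(v))t$ is diagonal, its exponential is the diagonal matrix with entries $e^{-(\alpha_l(v)+\beta_l(v))t}$, $l=1,\ldots,d$. So, using the explicit expression for $\psi_{\textrm{D}}$ in \eqref{eq:sub2_BM_flow}, the $l$th component of $\psi_{\textrm{D}}(t;v,u)$ is
\begin{equation*}
\psi_{\textrm{D}}^l(t;v,u)=\lambda_l\, u_l+(1-\lambda_l)\,U^{l,\infty}(v),\qquad \lambda_l:=e^{-(\alpha_l(v)+\beta_l(v))t}.
\end{equation*}

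Next I would check the ranges of the two weights and of the two points being combined. Since $\alpha_l(v),\beta_l(v)>0$ and $t>0$, we have $\lambda_l\in(0,1)$. By assumption $u_l\in[0,1]$. Also $U^{l,\infty}(v)=\alpha_l(v)/(\alpha_l(v)+\beta_l(v))$ lies in $(0,1)$, because both $\alpha_l(v)$ and $\beta_l(v)$ are strictly positive. Hence $\psi_{\textrm{D}}^l(t;v,u)$ is a convex combination of two points of $[0,1]$. Since $[0,1]$ is convex, it belongs to $[0,1]$ for every $l$, which gives $\psi_{\textrm{D}}(t;v,u)\in[0,1]^d$. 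In fact, if $u_l\in[0,1]$ the combination even lies in $(0,1)$, since one of the two points lies in the open interval and both weights are strictly positive.

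An alternative route is to view $\psi_{\textrm{D}}$ as the solution of the linear ODE in \eqref{eq:sub2_BM_flow_intEq} and argue as in Lemma \ref{lem:Uin01}: at the boundary faces $u_l=0$ and $u_l=1$ the vector field points inward, equal to $\alpha_l(v)>0$ and $-\beta_l(v)<0$ respectively. The explicit formula makes this unnecessary, so I would use the convex-combination argument.

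There is no real obstacle. The only point needing care is the step of writing the matrix exponential componentwise, which is immediate here because the matrix is diagonal. The positivity of $\balpha$ and $\bbeta$ is the hypothesis doing all the work.
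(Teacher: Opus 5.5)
Your proposal is correct and follows essentially the same route as the paper. The paper also writes $\psi_{\textrm{D}}$ componentwise as $e^{-(\alpha_l(v)+\beta_l(v))t}u^l+\bigl(1-e^{-(\alpha_l(v)+\beta_l(v))t}\bigr)\alpha_l(v)/(\alpha_l(v)+\beta_l(v))$. It then checks strict positivity and the explicit bound $\psi_l\leq(\alpha_l(v)+e^{-(\alpha_l(v)+\beta_l(v))t}\beta_l(v))/(\alpha_l(v)+\beta_l(v))<1$, which is your convex-combination argument written out by hand.
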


\begin{proof} 
First, note that the function $\psi_{\textrm{D}}$ can be expressed as a vector of functions 
\begin{equation*}
   \psi_{\textrm{D}}(t;v,u) = \left(\psi_1(t;v,u^1),\ldots \psi_d(t;v,u^d)\right)^T,
\end{equation*}
where, for $l \in \{1,\ldots,d\}$, we have that
\begin{equation}\label{eq:psi_gamma}
    \psi_l(t;v,u^l) = e^{-(\alpha_l(v)+\beta_l(v))t} u^l+ \left(1- e^{-(\alpha_l(v)+\beta_l(v))t}\right)\frac{\alpha_l(v)}{\alpha_l(v)+\beta_l(v)}.
\end{equation}
Since $u^l \in [0,1]$ and the functions $\alpha_l(v)$ and $\beta_l(v)$ are strictly positive, $\textrm{$\psi_l(t;v,u^l)$}$ is also strictly positive.
Moreover, it holds that
\begin{equation*}
    \psi_l(t;v,u^l) \leq  \frac{\alpha_l(v)+ e^{-t(\alpha_l(v)+\beta_l(v))} \beta_l(v)}{\alpha_l(v)+\beta_l(v)}<1,
\end{equation*}
which concludes the statement.
\end{proof}

The following \textit{state space preservation} property of the splitting schemes will serve as an essential building block for the proofs of the subsequent subsections.

\begin{proposition}[State space preservation]\label{rem:stateSpacePreservation}
    Consider the splitting schemes proposed for System~\eqref{eq:HH_redefined}\textbf{[BM]} and \eqref{eq:HH_redefined}\textbf{[OU]}. Then, for both their discrete- and continuous-time formulations, the $V$-component is non-explosive and the $U$-component remains in $[0,1]^d$.
\end{proposition}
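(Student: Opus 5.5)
The proof will be an induction over the time steps. Lemma~\ref{lemma:subunits_stay_in_cube} handles the $U$-component, and the explicit Gaussian structure of the $V$- (and $Z$-) updates handles non-explosion.

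\emph{Discrete-time schemes.} We induct on $i$, with hypothesis $\widetilde{U}_{t_{i-1}}\in[0,1]^d$ and $\widetilde{V}_{t_{i-1}}$ a.s.\ finite. The base case holds since $U_0\in[0,1]^d$ and $V_0$ (and $Z_0$ in the \textbf{[OU]}-case) are a.s.\ finite.

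In each of \eqref{eq:BM_HH_LT1}--\eqref{eq:BM_HH_S} and \eqref{eq:OU_HH_LT1}--\eqref{eq:OU_HH_S}, the intermediate value $U_i$ is either $\widetilde{U}_{t_{i-1}}$ itself or $\psi_{\textrm{D}}(\cdot;\widetilde V_{t_{i-1}},\widetilde{U}_{t_{i-1}})$ with a finite $V$-argument. Lemma~\ref{lemma:subunits_stay_in_cube} then gives $U_i\in[0,1]^d$. The new $\widetilde U_{t_i}$ is either $U_i$ or $\psi_{\textrm{D}}(\cdot;\widetilde V_{t_i},U_i)$, so applying the lemma again (once $\widetilde V_{t_i}$ is known to be finite) keeps it in $[0,1]^d$.

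For $\widetilde V_{t_i}$ we use the representation \eqref{eq:explicitV_BM} (resp.\ \eqref{eq:explicitV_OU}). Since $a,b,\Sigma$ are continuous and $[0,1]^d$ is compact, we have $a(U_i)\in[-\bar a,-\underline a]$ with $0<\underline a\le\bar a<\infty$, and $|b(U_i)|,|\Sigma(U_i)|$ are bounded. Hence $e^{a(U_i)\Delta}\le 1$ and $|b(U_i)/a(U_i)|$ is bounded. Conditionally on $\mathcal F_{t_{i-1}}$, the integral $\chi_{i-1}(U_i)$ is centred Gaussian with variance \eqref{eq:nu_variance_BM} (resp.\ \eqref{eq:nu_variance_OU}), and this variance is bounded by $\sup\Sigma^2\,\Delta$. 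So $\chi_{i-1}(U_i)$ is a.s.\ finite, and therefore so is $\widetilde V_{t_i}$.

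In the \textbf{[OU]}-case we additionally need the $Z$-terms. By \eqref{eq:explicitZ_OU_LT} and \eqref{eq:explicitZ_OU_S}, $Z$ evolves by exact affine-Gaussian OU steps, so every $Z_i$ and $Z_{t_i}$ is a.s.\ finite. Then $\theta(\mu-Z_i)/a(U_i)$ is a.s.\ finite and the same argument applies.

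\emph{Continuous-time extensions.} We use the representations \eqref{eq:splittings_BM_cont} and \eqref{eq:splittings_OU_cont}, and analogous ones for LT1 and Strang obtained by replacing $\Delta$ with $t-t_{i-1}$. On $(t_{i-1},t_i]$, $\overline V_t$ is given by the explicit formula $\psi_{\textrm{BM}}(t-t_{i-1};\overline V_{t_{i-1}},\overline U_{t_{i-1}})$, i.e.\ an Ornstein--Uhlenbeck-type process with frozen, bounded coefficients, and $\overline U_t=\psi_{\textrm D}(t-t_{i-1};\overline V_t,\overline U_{t_{i-1}})$.

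Non-explosion on the whole interval, which is the step needing most care, is obtained pathwise:
\[
\sup_{t\in(t_{i-1},t_i]}|\overline V_t|\le|\overline V_{t_{i-1}}|+C+|\Sigma(\overline U_{t_{i-1}})|\sup_{t}\Big|\int_{t_{i-1}}^t e^{a(\overline U_{t_{i-1}})(t-s)}dW_s\Big|.
\]
Integrating by parts writes the stochastic integral as
\[
W_t-W_{t_{i-1}}+a\int_{t_{i-1}}^t e^{a(t-s)}(W_s-W_{t_{i-1}})\,ds,
\]
which is bounded by $(1+\bar a\Delta)\sup_{s\in[t_{i-1},t_i]}|W_s-W_{t_{i-1}}|<\infty$ by continuity of Brownian paths. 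The same bound applies to the \textbf{[OU]} integrals and to $Z$. Lemma~\ref{lemma:subunits_stay_in_cube}, applied for every $t$, then gives $\overline U_t\in[0,1]^d$, and the induction over $i$ closes.

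Two points of care remain. First, Lemma~\ref{lemma:subunits_stay_in_cube} must be applied at random arguments; this is fine because it holds for every deterministic $(v,u)$, so it holds pointwise in $\omega$. Second, the bounds for the continuous extension must be uniform over $t\in(t_{i-1},t_i]$, which the pathwise integration-by-parts estimate provides.
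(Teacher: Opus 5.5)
Your proposal is correct and follows essentially the same route as the paper. The paper argues that $\phi^{[2;\textrm{D}]}$ maps $\R\times[0,1]^d$ into itself by Lemma~\ref{lemma:subunits_stay_in_cube}, that $\phi^{[1;\textrm{BM}]}$ (resp.\ $\phi^{[1;\textrm{OU}]}$) freezes $u$ and returns a real-valued $V$, and that alternating the two flows, also in the continuous-time extensions, therefore preserves the state space. Your step-by-step induction, the Gaussian variance bound and the pathwise integration-by-parts estimate just make explicit what the paper states tersely.
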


\begin{proof}
    We focus on the $[\textbf{BM}]$-case and note that the $[\textbf{OU}]$-case can be treated analogously. 
    
    Recall the flows $\phi^{[1;BM]}$ \eqref{eq:sub1_BM_flow} and $\phi^{[2;D]}$ \eqref{eq:sub2_BM_flow}, and the compositions \eqref{eq:splittings_BM} of the splitting schemes. By Lemma \ref{lemma:subunits_stay_in_cube}, an application of the flow $\phi^{[2;D]}$ to 
    $(v,u)\in\R\times[0,1]^d$ produces a new value $(v,u')\in\R\times[0,1]^d$. In addition, an application of the flow $\phi^{[1;BM]}$ to $(v,u)\in\R\times[0,1]^d$ produces a random value $(V,u)\in\R\times[0,1]^d$. Note in particular that this procedure is by nature non-explosive, as the random value $V$ is real-valued.
    
    Therefore, alternating the flows $\phi^{[2;D]}$ and $\phi^{[1;BM]}$ produces a non-explosive dynamic with the~$U$-component remaining in the hypercube $[0,1]^d$, for all considered splitting schemes. Moreover, by the definition of the continuous-time versions of the splitting schemes (see, e.g., \eqref{eq:splittings_BM_cont}), this result extends to this case as well. 
\end{proof}

\begin{remark}\label{remark:euler-maruyama}
Note that a result as in Proposition~\ref{rem:stateSpacePreservation} does not hold for the  Euler-Maruyama~method. For that scheme, the $l$-th component of $U$ is approximated recursively by $\widetilde{U}^l_{t_i} = \varphi_l(\widetilde{V}_{t_{i-1}},\widetilde{U}^l_{t_{i-1}})$, where $\varphi_l$ is defined as
\begin{equation*}
    \varphi_l(v,u^l) := u^l + \Delta \left(\alpha_l(v)(1-u^l) - \beta_l(v) u^l \right), \quad \forall u^l\in[0,1], \: |v| < \infty,
\end{equation*}
with $\alpha_l(v),\beta_l(v)>0$. This function, however, is not guaranteed to stay in the interval $[0,1]$. Indeed, $\varphi_l(v,u^l)>1$ whenever $u^l>(1-\Delta \alpha_l(v))/(1- \Delta \alpha_l(v)-\Delta \beta_l(v))$. If $\Delta \alpha_l(v)>1$, then the right-hand side of the inequality takes values between $0$ and $1$, and hence there exist $u\in[0,1]$ such that the trajectory $\widetilde{U}^l$ escapes the interval $[0,1]$. For reasonably small~$\Delta$, this condition implies that the trajectory of $\widetilde{U}$ is more likely to escape the set $[0,1]^d$ when one of its components is close to $1$. The larger is $\Delta$, the higher is the risk that the process escapes. 
\end{remark}

\subsection{Boundedness of moments}\label{sec:boundedMoments_splitting}

As  mentioned in Proposition \ref{rem:stateSpacePreservation} the scheme is non-explosive, more precisely the $V$-component is real-valued, and the $U$-component stays in the hypercube $[0,1]^d$. Still, this does not imply boundedness of moments for $\overline{V}$, in the sense of \eqref{eq:bounded-2-moments}. Thus, in this subsection, we work at establishing the boundedness of moments of the $V$-component for the proposed splitting schemes. For clarity of presentation, we focus on the second Lie-Trotter splitting method and note that the results for the other splitting schemes can be derived similarly.

\begin{proposition}[Boundedness of moments for $\overline{V}$]\label{thm:boundedMomentsTildeV}
    Consider the continuous-time version \eqref{eq:LT2_BM_cont} or \eqref{eq:LT2_OU_cont} of the second Lie-Trotter splitting scheme for System~\eqref{eq:HH_redefined}\textbf{[BM]} and  \eqref{eq:HH_redefined}\textbf{[OU]}, respectively. Then, $\forall p\geq 1$, it holds that 
    \begin{equation*}
        \mathbb{E}\left[ \max_{0\leq t\leq T} \left|\overline{V}_{t}\right|^{2p} \right] \leq C\left( 1+ \mathbb{E}\left[\left|V_0\right|^{2p}\right] \right), 
    \end{equation*}
    where the constant $C>0$ depends on $p$, $T$ and the coefficients. 
\end{proposition}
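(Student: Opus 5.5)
The plan is to treat $\overline{V}$ on each subinterval as the solution of a linear SDE with frozen coefficients. The key input is Proposition~\ref{rem:stateSpacePreservation}: $\overline{U}_{t_{i-1}}\in[0,1]^d$ for all $i$. Since $a,b,\Sigma$ are continuous on the compact set $[0,1]^d$, and $a<0$ there, we get constants
\[
\bar a:=\max_{u\in[0,1]^d}|a(u)|,\quad \bar b:=\max_{u\in[0,1]^d}|b(u)|,\quad \bar\Sigma:=\max_{u\in[0,1]^d}|\Sigma(u)|.
\]
So on the event-free level all coefficients appearing in \eqref{eq:LT2_BM_cont} are uniformly bounded. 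For the \textbf{[OU]}-case, the additional drift term $\theta(\mu-Z_{t_{i-1}})$ is handled through the moment bound \eqref{eq:Z-pmoments-crude} for $Z$, or Lemma~\ref{lemma:boundedMomentsZ}.

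Next I would define the piecewise-frozen processes $\hat a_s:=a(\overline{U}_{t_{i-1}})$, $\hat b_s:=b(\overline{U}_{t_{i-1}})$ and $\hat\Sigma_s:=\Sigma(\overline{U}_{t_{i-1}})$ for $s\in(t_{i-1},t_i]$. These are adapted, bounded and piecewise constant. Summing \eqref{eq:LT2_BM_cont} over the subintervals then gives, for all $t\in[0,T]$, the global integral representation
\[
\overline{V}_t=V_0+\int_0^t\big(\hat a_s\overline{V}_s+\hat b_s\big)ds+\int_0^t\hat\Sigma_s\,dW_s .
\]
By the elementary inequality $(x+y+z+w)^{2p}\le 4^{2p-1}(\cdots)$, Hölder's inequality in time, and the Burkholder–Davis–Gundy inequality for the stochastic integral, I would obtain for $\varphi(t):=\E[\max_{0\le s\le t}|\overline{V}_s|^{2p}]$ the bound
\[
\varphi(t)\le C\Big(\E|V_0|^{2p}+\bar b^{2p}T^{2p}+\bar\Sigma^{2p}T^{p}\Big)+C\bar a^{2p}T^{2p-1}\int_0^t\varphi(s)\,ds .
\]
Grönwall's inequality then yields the claim.

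The main obstacle is that Grönwall requires $\varphi(t)<\infty$ a priori. I would handle this by localization: introduce $\tau_k=\inf\{t:|\overline{V}_t|>k\}$ and run the above argument for $\varphi_k(t)=\E[\max_{s\le t\wedge\tau_k}|\overline{V}_s|^{2p}]$, which is finite. The drift integral up to $t\wedge\tau_k$ is bounded by $\int_0^t\max_{r\le s\wedge\tau_k}|\overline{V}_r|ds$, so the same estimate holds for $\varphi_k$. This gives a bound uniform in $k$, and Fatou's lemma as $k\to\infty$ finishes the argument, with $\tau_k\to\infty$ by non-explosion (Proposition~\ref{rem:stateSpacePreservation}).

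In the \textbf{[OU]}-case there is an extra drift term $\theta(\mu-\hat Z_s)$ with $\hat Z_s=Z_{t_{i-1}}$. Its contribution is bounded by $C\,T^{2p}\,\E[\max_t|Z_t|^{2p}]\le C(1+\E|Z_0|^{2p})$. This adds a $Z_0$-moment to the constant, which is absorbed into "depending on the coefficients" since $Z_0$ has all moments finite. Alternatively, one can state the estimate with $\E|Z_0|^{2p}$ included, as in Lemma~\ref{lem:bounded_moments_OU}.
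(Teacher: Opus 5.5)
Your proposal is correct and follows essentially the same route as the paper. Both arguments freeze the coefficients piecewise to get a global integral representation of $\overline{V}$, use $\overline{U}\in[0,1]^d$ to bound them, apply Hölder and Burkholder--Davis--Gundy, localize with the stopping times $\inf\{t:|\overline{V}_t|>k\}$, and conclude by Grönwall and Fatou, with the \textbf{[OU]}-case handled through the moment bounds on $Z$. Your remark that the \textbf{[OU]} constant also depends on $\E|Z_0|^{2p}$ is accurate; the paper leaves this dependence implicit.
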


\begin{proof}
    We first treat the  \textbf{[BM]}-case and then the \textbf{[OU]}-case.
    \vspace{-0.3cm}
    \paragraph*{The \textbf{[BM]}-case.} We start with System \eqref{eq:HH_redefined}\textbf{[BM]} and recall the continuous-time version \eqref{eq:LT2_BM_cont} of the second Lie-Trotter splitting method in \eqref{eq:splittings_BM}. 
    Performing back iteration gives for any $0\leq t\leq T$,
    \begin{equation*}
        \overline{V}_{t} = {V}_{0} + \sum_{k=1}^{n} \int\limits_{t_{k-1}\wedge t}^{t_{k}\wedge t} a( \overline{U}_{t_{k-1}} ) \overline{V}_s ds + \sum_{k=1}^{n}  \int\limits_{t_{k-1}\wedge t}^{t_{k}\wedge t} b( \overline{U}_{t_{k-1}} ) ds + \sum_{k=1}^{n} \int\limits_{t_{k-1}\wedge t}^{t_{k}\wedge t} \Sigma( \overline{U}_{t_{k-1}} ) dW_s,
    \end{equation*}
    or in other words
    \begin{equation}
    \label{eq:dyn-tildeV}
        \overline{V}_{t} = {V}_{0} + \int_0^t\bar{a}_s\overline{V}_sds+\int_0^t\bar{b}_sds+
        \int_0^t \bar{\sigma}_sdW_s,
    \end{equation}
    if we define for any $0\leq s\leq T$
    $$\bar{a}_s:=a(\overline{U}_0)\mathbbm{1}_{\{0\}}(s)+\sum_{k=1}^{n} a(\overline{U}_{t_{k-1}}) \mathbbm{1}_{(t_{k-1},t_k]}(s),$$
    $$\bar{b}_s:=b(\overline{U}_0)\mathbbm{1}_{\{0\}}(s)+\sum_{k=1}^{n} b(\overline{U}_{t_{k-1}}) \mathbbm{1}_{(t_{k-1},t_k]}(s),$$
    and
    $$\bar{\sigma}_s:=\Sigma(\overline{U}_0)\mathbbm{1}_{\{0\}}(s)+\sum_{k=1}^{n} \Sigma(\overline{U}_{t_{k-1}}) \mathbbm{1}_{(t_{k-1},t_k]}(s).$$
    Note in particular that the stochastic integral involved in \eqref{eq:dyn-tildeV} is correctly defined as 
    $\overline{U}_{t_{i-1}}$ is $\mathcal{F}_{t_{i-1}}$-measurable for any $1\leq i\leq n$ (see e.g. \cite{kara} Section 3.2.A).
    
    Considering the $2p$-th power, we further obtain
    \begin{equation*}
        |\overline{V}_{t}|^{2p} \leq C(p) \left\{ \left|V_0\right|^{2p} + \left| \int\limits_{0}^{t} \bar{a}_s\overline{V}_s ds \right|^{2p} +\left| \int\limits_{0}^{t} \bar{b}_s ds \right|^{2p} + \left| \int\limits_{0}^{t} \bar{\sigma}_s dW_s \right|^{2p}  \right\}.
    \end{equation*}
    Using Hölder's inequality, and the fact that $|\bar{a}_s|\leq C$ and 
    $|\bar{b}_s|\leq C$ (since $a,b \in C^1$ and the component $\overline{U}$ is bounded, cf. Proposition \ref{rem:stateSpacePreservation}) 
    we get that for any time $0\leq t\leq T$
    $$
    \left| \int\limits_{0}^{t} \bar{a}_s \overline{V}_s ds \right|^{2p} +\left| \int\limits_{0}^{t} \bar{b}_s ds \right|^{2p}
    \leq C(p,T)\left[ 1 + \int_0^{t} \left|\overline{V}_s \right|^{2p} ds \right].
    $$
    Then it is possible to use  the Burkholder-Davis-Gundy inequality and the fact that
    $\left| \bar{\sigma}_s \right| \leq C$ (since $\Sigma \in C^1$ and the $\overline{U}$-component is bounded, cf. Proposition \ref{rem:stateSpacePreservation}) 
    in order to get for any stopping time $0\leq S\leq T$,
    \begin{align*}
        \mathbb{E}\left[  \max_{0\leq r\leq S}\left| \int\limits_{0}^{r} \bar{\sigma}_s dW_s \right|^{2p}   \right] \leq C(p)\,\E[S^p].
    \end{align*}
Gathering all the pieces, 
introducing the stopping times $S_k=\inf\{ t\geq 0:\,|\overline{V}_{t}|>k \}$ and $t\wedge S_k\leq T$, and using Fubini's theorem, 
we get for any $0\leq t\leq T$,
\begin{equation}
\begin{array}{lll}
\E\left[\max_{0\leq r\leq t\wedge S_k}|\overline{V}_r|^{2p}\right]
&\leq & \ds K\Big[ \E[|\overline{V}_0|^{2p}]+1  + \E\int_0^{t\wedge S_k}\big[\max_{0\leq r\leq s}|\overline{V}_r|^{2p}\big]ds  \Big]\\
\\
&\leq& \ds K\Big[ \E[|\overline{V}_0|^{2p}]+1  + \E\int_0^{t}\big[\max_{0\leq r\leq s\wedge S_k}|\overline{V}_r|^{2p}\big]ds  \Big]\\
\\
&\leq& \ds K\Big[ \E[|\overline{V}_0|^{2p}]+1  + \int_0^{t}\E\big[\max_{0\leq r\leq s\wedge S_k}|\overline{V}_r|^{2p}\big]ds  \Big],\\
\end{array}
\end{equation}
where the constant $K$ depends on $p$, $T$ and the coefficients of the SDE. We now use similar arguments as in Lemma \ref{lem:bounded_moments_true_BM} in the Appendix. First, an application of Grönwall's lemma to the real-valued function 
$t\mapsto\E\big[\max_{0\leq r\leq t\wedge S_k}|\overline{V}_r|^{2p}\big]$
gives, for any $0\leq t\leq T$ and any $k$,
$$
\E\big[\max_{0\leq r\leq t\wedge S_k}|\overline{V}_r|^{2p}\big]
\leq   K\left( \mathbb{E}\left[ |V_0|^{2p} \right] + 1 \right)  e^{KT} \leq
        C \left(  1+ \mathbb{E}\left[ \left| V_0 \right|^{2p} \right] \right).
$$
Then, by Fatou's lemma one obtains
$$
    \E\left[ \max_{0\leq t\leq T}|\overline{V}_t|^{2p} \right]=\E\left[\liminf_k\max_{0\leq t\leq T\wedge S_k}|\overline{V}_t|^{2p} \right] 
    \leq \liminf_k \E\left[\max_{0\leq t\leq T\wedge S_k}|\overline{V}_t|^{2p} \right]   \leq   C\left( 1+ \mathbb{E}\left[\left|V_0\right|^{2p}\right]\right),
$$
which is the desired result.

\vspace{-0.3cm}
\paragraph*{The \textbf{[OU]}-case.}
Now, consider System \eqref{eq:HH_redefined}\textbf{[OU]} and recall the corresponding continuous-time version \eqref{eq:LT2_OU_cont} of the splitting schemes in \eqref{eq:splittings_OU}. 
Using Lemma~\ref{lemma:boundedMomentsZ} (cf. Appendix \ref{appA}), 
the result can be shown analogously as in the \textbf{[BM]}-case. Note that the proof even simplifies, due to the case of additive noise.
\end{proof}

\subsection{Local consistency and mean-square convergence}\label{sec:mean_square_convergence_splitting}

In this section, we establish the local consistency of the proposed splitting schemes and subsequently use this result to prove their global mean-square convergence. These findings are based on the theoretical framework developed in Section \ref{sec:fund-thm}. For clarity of presentation, we focus again on the second Lie-Trotter splitting method and note that the results for the other splitting schemes can be derived in a similar manner.

\paragraph*{Local consistency of the splitting methods.}

In the following, we denote
$$
\rho^V_R:= \inf \{ 0\leq t\leq T \colon |{V}_t| \geq R \},
\quad \rho^{\overline{V}}_R:= \inf \{ 0\leq t\leq T \colon |\overline{V}_t| \geq R \},
\quad \rho_R:=\rho_R^{\overline{V}} \wedge \rho_R^V,
$$
and for any $0\leq s\leq T$,
\begin{equation*}
\begin{gathered}
    \rho_R^{s,V}:= \inf \{ s\leq t\leq T \colon |{V}_t| \geq R \}, \quad 
    \rho_R^{s,\overline{V}}:= \inf \{ s\leq t\leq T \colon |\overline{V}_{t}| \geq R \}, \quad \rho^s_R:=\rho_R^{s,\overline{V}} \wedge \rho_R^{s,V}.
\end{gathered}
\end{equation*}
As it is proven that $U$ and $\overline{U}$ remain in the hypercube (see Theorem \ref{thm:bounded_moments_true_BM} and Proposition~\ref{rem:stateSpacePreservation}, respectively), it follows that both $X=(V,U^T)^T$ and 
$\overline{X}=(\overline{V},\overline{U}^T)^T$ stay in the compact region 
$B_R(0)\times [0,1]^d$ before time $\rho_R$. 
Thus, local consistency for~$\overline{X}$ (in the sense of Definition \ref{def:one_step_consistency}) will be proven using the stopping times $\rho_R,\rho_R^s$, which share the common notation with the ones in Section~\ref{sec:fund-thm}, as they play the same role.

We also introduce the notation
\begin{equation}\label{eq:f_v}
    f(v) :=  -\textbf{diag}\left( \balpha\bigl( v \bigr) + \bbeta\bigl( v \bigr) \right),
\end{equation}
which allows lightening the computations related to the $U$-component. We start with two technical lemmas.


\begin{lemma}\label{lemma:aux_result_new2}
    Let  $X_t=(V_t,U_t^T)^T$, $t\in[0,T]$, be the solution of System \eqref{eq:HH_redefined}\textbf{[BM]} or \eqref{eq:HH_redefined}\textbf{[OU]}, respectively, and let $R>0$. For any $s,t\in[0,T]$ and any $\omega\in\Omega$ with 
    $s\leq t\leq \rho_R(\omega)$, it holds that
    \begin{equation*}
        \left| U_t(\omega)- U_s(\omega) \right|  \leq C_R(t-s),
    \end{equation*}
    where the constant $C_R>0$ depends on $R$ and the coefficients.
\end{lemma}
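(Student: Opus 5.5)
The argument is pathwise and deterministic. The $U$-component of the solution has no diffusion part, so for every $\omega$ the map $r\mapsto U_r(\omega)$ is absolutely continuous. In both the \textbf{[BM]} and \textbf{[OU]} cases it satisfies
\begin{equation*}
U_t(\omega)-U_s(\omega)=\int_s^t \Big( f\big(V_r(\omega)\big)U_r(\omega)+\balpha\big(V_r(\omega)\big)\Big)\,dr,
\end{equation*}
with $f$ as in \eqref{eq:f_v}. By Theorem~\ref{thm:bounded_moments_true_BM} (resp. Theorem~\ref{thm:existence-and-bounds-OU}), this identity holds for all $s\leq t$ in $[0,T]$ outside a null set. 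On that set we also have $U_r(\omega)\in[0,1]^d$ for all $r\in[0,T]$, by \eqref{eq:Uin01-BM} (resp. \eqref{eq:Uin01-OU}), or directly by Lemma~\ref{lem:Uin01} / Lemma~\ref{lem:Uin01-OU}, since $\tau=\infty$. I will either state the claim for $\P$-a.e. $\omega$ or use the pathwise formulation of the lemmas that holds before explosion, which yields it for every $\omega$.

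The plan is to take norms inside the integral and bound the integrand uniformly on $[s,t]$. For $r\in[s,t]\subset[0,\rho_R(\omega)]$ with $r<\rho_R(\omega)$, the definition of $\rho_R$ (through $\rho_R^V$) gives $|V_r(\omega)|<R$. By continuity of paths we have $|V_r(\omega)|\leq R$ also at $r=\rho_R(\omega)$, and in any case a single point does not affect the Lebesgue integral. Since $\balpha,\bbeta$ are $C^1$, hence continuous, they are bounded on the compact interval $[-R,R]$. Set
\begin{equation*}
M_R:=\max_{|v|\leq R}\big(|\balpha(v)|+|\bbeta(v)|\big)<\infty .
\end{equation*}
Then $|f(V_r)|\leq M_R$ and $|\balpha(V_r)|\leq M_R$. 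Together with $|U_r|\leq\sqrt d$, this shows that the integrand is bounded in norm by $C_R:=M_R(\sqrt d+1)$. Hence $|U_t(\omega)-U_s(\omega)|\leq C_R(t-s)$, and $C_R$ depends only on $R$, $d$ and the coefficients $\balpha,\bbeta$.

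There is no real obstacle here. The only point needing care is bookkeeping: the stopping time $\rho_R$ here is the $V$-based one introduced just before the lemma, which may involve $\overline V$ as well. It is still at most $\rho_R^V$, so the bound $|V_r|\leq R$ on $[s,t]$ remains valid. The \textbf{[OU]} case is identical, because the $U$-equation does not involve $Z$.
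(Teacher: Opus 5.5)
Your proposal is correct and follows essentially the same route as the paper: you write $U_t-U_s=\int_s^t\big(f(V_r)U_r+\balpha(V_r)\big)\,dr$, use $|V_r|\leq R$ on $[s,t]$ and $U_r\in[0,1]^d$ to bound the integrand by a constant $C_R$ (via continuity of $\balpha,\bbeta$ on $[-R,R]$), and conclude. Your extra care about the almost-sure versus every-$\omega$ formulation and about the endpoint $r=\rho_R(\omega)$ is a harmless refinement of the paper's slightly terser argument.
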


\begin{proof}
    Let $R>0$ fixed, pick $\omega\in\Omega$ and consider $s\leq t\leq \rho_R(\omega)$. From System \eqref{eq:HH_redefined} it immediately follows that
    \begin{equation*}
        U_t -U_s=   \int\limits_{s}^{t} \big[f(V_r) U_r +\balpha \bigl( V_r \bigr) \big] dr
    \end{equation*}
    (we have dropped the reference to $\omega$).
    Now, for any $r\in [s,t]$, we have $V_r\in B_R(0)$ and $U_r\in [0,1]^d$. As in addition the functions $f$ and $\balpha$ are of class $C^1$ the quantity 
    $|f(V_r) U_r +\balpha \bigl( V_r \bigr)|$ remains bounded, by a constant $C_R>0$ depending on $R$ and the coefficients. The result then follows from Minkowski's inequality.
\end{proof}


\begin{lemma}\label{lemma:boundedMomentsDifferencesV}
    Let  $X_t=(V_t,U_t^T)^T$, $t\in[0,T]$, be the solution of System \eqref{eq:HH_redefined}\textbf{[BM]} or \eqref{eq:HH_redefined}\textbf{[OU]}, respectively. Then, 
    for any $x\in\R\times[0,1]^d$, any $s',s,t \in [0,T]$ with $s'\leq s<t$ and any 
    stopping time~$\rho$ with 
    $s\leq \rho\leq t$, it holds that
    \begin{equation*}
        \mathbb{E}\left[ \left| V^{s',x}_\rho-V^{s',x}_s \right|^{2} \right] \leq C \left[(t-s)+(t-s)^{2}\right],
    \end{equation*}
    where the constant $C>0$ depends on $T$, $\E[|V_0|^2]$ and the coefficients. 
\end{lemma}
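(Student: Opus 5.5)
We write the increment through the SDE and bound the drift and diffusion parts separately. In the \textbf{[BM]}-case,
\begin{equation*}
V^{s',x}_\rho-V^{s',x}_s=\int_s^\rho \big(a(U^{s',x}_r)V^{s',x}_r+b(U^{s',x}_r)\big)dr+\int_s^\rho \Sigma(U^{s',x}_r)\,dW_r .
\end{equation*}
In the \textbf{[OU]}-case there is an extra drift term $\theta(\mu-Z_r)$, and the diffusion coefficient is the constant $\sigma$. Using $(x+y)^2\leq 2x^2+2y^2$, it suffices to bound the second moment of each integral.

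For the stochastic integral, we write it as $\int_s^t \1_{r\leq\rho}\Sigma(U_r)\,dW_r$; the integrand is progressive because $\rho$ is a stopping time. By It\^o's isometry (or Burkholder--Davis--Gundy), its second moment is at most $\E\int_s^t|\Sigma(U_r)|^2dr$. Theorem~\ref{thm:bounded_moments_true_BM} (resp.\ Theorem~\ref{thm:existence-and-bounds-OU}) gives $U_r\in[0,1]^d$, so $\Sigma(U_r)$ is bounded and the bound is $C(t-s)$. In the \textbf{[OU]}-case it is simply $\sigma^2(t-s)$.

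For the drift integral, since $\rho-s\leq t-s$, Cauchy--Schwarz gives
\begin{equation*}
\Big|\int_s^\rho g_r\,dr\Big|^2\leq (t-s)\int_s^t|g_r|^2dr .
\end{equation*}
Because $a$ and $b$ are continuous on the compact set $[0,1]^d$, we have $|g_r|^2\leq C(1+|V_r|^2)$, plus $C|Z_r|^2$ in the \textbf{[OU]}-case. Taking expectations and using Fubini yields the bound
\begin{equation*}
(t-s)^2\,C\Big(1+\E\big[\max_{r\leq T}|V^{s',x}_r|^2\big]\Big).
\end{equation*}
The moment estimates \eqref{eq:V-pmoments-BM}/\eqref{eq:V-pmoments-OU} with $p=1$ bound the maximum. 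For the \textbf{[OU]}-case, \eqref{eq:Z-pmoments-crude} or Lemma~\ref{lemma:boundedMomentsZ} bound the $Z$ term.

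The main point to check is applying these moment bounds to the process started at $x$ at time $s'$ rather than from $X_0$ at time $0$. The bounds come out in terms of $|x|^2$, that is, of the initial moment; the statement's dependence on $\E[|V_0|^2]$ should be read as that initial second moment. We would note explicitly that the proofs of Lemma~\ref{lem:bounded_moments_true_BM} and Theorem~\ref{thm:bounded_moments_true_BM} carry over verbatim to a deterministic initial point at time $s'$, by time homogeneity and the Markov property. Adding the two pieces gives $C[(t-s)+(t-s)^2]$.
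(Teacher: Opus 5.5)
Your proposal is correct and follows essentially the same route as the paper: you split the increment into the drift integral and the stochastic integral. The drift part is bounded via Cauchy--Schwarz (H\"older), $U\in[0,1]^d$ and the second-moment bound on $\max_{r\leq T}|V_r|$, giving $C(t-s)^2$; the stochastic part is bounded via It\^o's isometry/BDG, giving $C(t-s)$; and Lemma~\ref{lemma:boundedMomentsZ} handles the extra $Z$-drift in the \textbf{[OU]}-case. Your explicit remark that, for the process started at $x$ at time $s'$, the constant depends on $|x|^2$ (via time-homogeneity) is left implicit in the paper, and it matches how the lemma is later used with $x\in B_R(0)\times[0,1]^d$.
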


\begin{proof}
    In what follows the constant $C$ 
    may change from one inequality to the other.

    We first treat the  \textbf{[BM]}-case and then the \textbf{[OU]}-case.
    \vspace{-0.3cm}
    \paragraph*{The $\textbf{[BM]}$-case.}
    We start with the solution of System \eqref{eq:HH_redefined}\textbf{[BM]}, for which we have for any $s\leq \theta\leq t$ (we drop the reference to $s',x$ in the computation)
    \begin{equation*}
        V_\theta-V_s = \int_{s}^{\theta} \left[ a(U_r)V_r + b(U_r) \right] dr + \int_{s}^{\theta} \Sigma(U_r) dW_r,
    \end{equation*}
    and thus
    \begin{equation*}
        \left| V_\rho-V_s \right|^{2} \leq C \left\{ \left| \int_{s}^{\rho}  a(U_r)V_r dr \right|^{2} + \left| \int_{s}^{\rho} b(U_r) dr \right|^{2}  + 
        \left| \int_{s}^{\rho} \Sigma(U_r) dW_r \right|^{2} \right \}.
    \end{equation*}
    Using Hölder's inequality, the fact that $a \in C^1$ and 
    Theorem \ref{thm:bounded_moments_true_BM}, for the first term we obtain that
    \begin{align}
        \ds\mathbb{E}\left[  \left| \int_{s}^{\rho}  a(U_r)V_r dr \right|^{2} \right] &\leq 
        \ds\E\big[(\rho-s) \int_{s}^{\rho} |a(U_r)|^2 \left| V_r \right|^{2}  dr\big]
        \leq\ds C\E\big[(t-s) \int_{s}^{t} \max_{0\leq v\leq T}\left| V_v \right|^{2}  dr\big]
        \\
        &=\ds C(t-s) \int_{s}^{t} \E\big[\max_{0\leq v\leq T}\left| V_v \right|^{2}  dr\big]
        \leq  C (t-s)^{2}.
        \end{align}
    For the second term, we obtain that
    \begin{equation*}
        \left| \int_{s}^{\rho} b(U_r) dr \right|^{2} \leq \left( \int_{s}^{t} \left|b(U_r)\right| dr \right)^{2} \leq C^{2}(t-s)^{2}.
    \end{equation*}
    Using the Burkholder-Davis-Gundy inequality and that $\Sigma \in C^1$, for the third term we have that
    \begin{equation*}
        \mathbb{E}\left[ \left| \int_{s}^{\rho} \Sigma(U_r) dW_r \right|^{2} \right] = \mathbb{E} \int_{s}^{\rho} \Sigma^2(U_r) dr   \leq C(t-s).
    \end{equation*}
    Combining everything, we get the result.

    \paragraph*{The $\textbf{[OU]}$-case.}
    Consider the solution of System \eqref{eq:HH_redefined}\textbf{[OU]}, for which we have for any~$s\leq \theta\leq t$
    \begin{equation*}
        V_\theta-V_s = \int\limits_{s}^{\theta} \left[ a(U_r)V_r + b(U_r) + \theta(\mu-Z_r) \right] dr + \int\limits_{s}^{\theta} \sigma dW_r.
    \end{equation*}
    Using Lemma \ref{lemma:boundedMomentsZ} (cf. Appendix \ref{appA}), the result can be proved by applying similar arguments as for the \textbf{[BM]}-case.
\end{proof}

In the following, as done in Section~\ref{sec:fund-thm}, let $X_{t}^{t_{i-1},x}$ and $\overline{X}_{t}^{t_{i-1},x}$ denote the exact and numerical solution of System \eqref{eq:HH_redefined}\textbf{[BM]} or \eqref{eq:HH_redefined}\textbf{[OU]}, respectively, at time $t$, given that they were in point $x$ at time $t_{i-1}$.

We are now in position to prove local consistency for the splitting schemes. 
In the setting under consideration, it is convenient to treat the two components of the process separately. We begin by establishing local consistency for the $V$-component.

\begin{proposition}[Local consistency for $\overline{V}$]\label{thm:consistency_V_local}
      Consider the continuous-time versions~\eqref{eq:splittings_BM_cont} and \eqref{eq:splittings_OU_cont} of the second Lie-Trotter splitting scheme for System \eqref{eq:HH_redefined}\textbf{[BM]} and \eqref{eq:HH_redefined}\textbf{[OU]}, respectively. For any $R>0$, it holds that for any $i=1,\ldots,n$ and any 
      $x\in B_R(0)\times[0,1]^d$, 
    \begin{equation*}
         \mathbb{E}\left[ \; \left| {V}_{t\wedge\rho_R^{t_{i-1}}}^{t_{i-1},x} - \overline{V}_{t\wedge\rho_R^{t_{i-1}}}^{t_{i-1},x} \right|^{2} \; \right]  \leq C_R \Delta^{3}, \quad \forall t\in (t_{i-1},t_i],
    \end{equation*}
    where the constant $C_R>0$ depends on $R$, $T$ and the coefficients. 

    As a consequence, for any $R>0$, the second Lie-Trotter splitting scheme is locally consistent of order $q=3/2$ on $B_R(0)\times[0,1]^d$ for the $V$-component.
\end{proposition}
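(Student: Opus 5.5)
We compare the two integral representations on $(t_{i-1},t]$, both started from $x=(v,u^T)^T$ at time $t_{i-1}$, with $t\in(t_{i-1},t_i]$. Write $\tau:=t\wedge\rho_R^{t_{i-1}}$. By \eqref{eq:HH_redefined}\textbf{[BM]} and \eqref{eq:LT2_BM_cont}, and using $\overline{U}_{t_{i-1}}=u$,
\begin{equation*}
V_\tau-\overline{V}_\tau=\int_{t_{i-1}}^{\tau}\big(a(U_r)V_r-a(u)\overline{V}_r\big)dr+\int_{t_{i-1}}^{\tau}\big(b(U_r)-b(u)\big)dr+\int_{t_{i-1}}^{\tau}\big(\Sigma(U_r)-\Sigma(u)\big)dW_r .
\end{equation*}
In the \textbf{[OU]}-case there is an extra drift term $\theta\int_{t_{i-1}}^{\tau}(Z_{t_{i-1}}-Z_r)\,dr$, since $Z$ is exact in the scheme. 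We then split $a(U_r)V_r-a(u)\overline{V}_r=(a(U_r)-a(u))V_r+a(u)(V_r-\overline{V}_r)$.

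The key input is Lemma \ref{lemma:aux_result_new2}. For $t_{i-1}\le r\le\tau$ we have $|V_r|<R$ and $U_r\in[0,1]^d$, so $|U_r-u|\le C_R(r-t_{i-1})\le C_R\Delta$. Since $a,b,\Sigma$ are $C^1$, they are Lipschitz on $[0,1]^d$, which gives the following bounds.
\begin{itemize}[label={}]
\item The $b$-term is bounded by $C_R\Delta^2$ pathwise, so its square has expectation at most $C_R\Delta^4$.
\item The stochastic term has, by the Itô isometry (after stopping at $\tau$), second moment at most $\E\int_{t_{i-1}}^{\tau}C_R^2(r-t_{i-1})^2dr\le C_R\Delta^3$. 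This term determines the order $q=3/2$.
\item The term $(a(U_r)-a(u))V_r$ is bounded pathwise by $C_R\Delta\cdot R$ before $\rho_R$, so its integral is at most $C_R\Delta^2$.
\end{itemize}
In the \textbf{[OU]}-case we bound $\E|\int_{t_{i-1}}^{\tau}(Z_{t_{i-1}}-Z_r)dr|^2\le\Delta\int_{t_{i-1}}^{t_i}\E|Z_r-Z_{t_{i-1}}|^2dr\le C\Delta^3$. This uses the standard $L^2$-increment bound of the OU process, $\E|Z_r-Z_{t_{i-1}}|^2\le C(r-t_{i-1})$, which follows from Lemma \ref{lemma:boundedMomentsZ}. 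Note that $Z$ is not stopped, but only its moments are needed, and these are uniform.

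The remaining term $\int_{t_{i-1}}^{\tau}a(u)(V_r-\overline{V}_r)dr$ is handled by Grönwall. Set $e(s):=\E|V_{s\wedge\rho}-\overline{V}_{s\wedge\rho}|^2$, with $\rho=\rho_R^{t_{i-1}}$ and $s\in[t_{i-1},t]$. Since $|a(u)|\le C$ on $[0,1]^d$, Cauchy–Schwarz gives
\begin{equation*}
e(s)\le 4\Big(C_R\Delta^3+C\Delta\int_{t_{i-1}}^{s}e(r)\,dr\Big).
\end{equation*}
Here we used $\int_{t_{i-1}}^{s\wedge\rho}|V_r-\overline{V}_r|^2dr\le\int_{t_{i-1}}^{s}|V_{r\wedge\rho}-\overline{V}_{r\wedge\rho}|^2dr$. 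Grönwall's lemma then yields $e(t)\le C_R\Delta^3e^{C\Delta^2}\le C_R\Delta^3$. The local consistency statement of order $q=3/2$ follows by taking the square root at $t=t_i$.

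The main obstacle is the careful handling of stopping. The stopped stochastic integral $\int_{t_{i-1}}^{\tau}(\Sigma(U_r)-\Sigma(u))dW_r$ must be a genuine martingale with bounded integrand so that the isometry applies; this holds because the integrand is bounded, as $\Sigma$ is continuous on the compact set $[0,1]^d$. We must also make sure that Lemma \ref{lemma:aux_result_new2} is applied only for $r\le\rho_R$, which is built into the integration limit $\tau$. Finally, the Grönwall argument needs the integrand written as $V_{r\wedge\rho}-\overline{V}_{r\wedge\rho}$ so that $e$ is a well-defined finite function. Finiteness follows from Theorem \ref{thm:bounded_moments_true_BM} and Proposition \ref{thm:boundedMomentsTildeV}, or directly from the linear structure of $\overline{V}$ on one step.
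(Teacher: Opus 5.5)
Your proposal is correct and follows essentially the same route as the paper. You use the same split $a(U_r)V_r-a(u)\overline{V}_r=(a(U_r)-a(u))V_r+a(u)(V_r-\overline{V}_r)$, Lemma~\ref{lemma:aux_result_new2} to get $|U_r-u|\le C_R\Delta$ before $\rho_R^{t_{i-1}}$, the It\^o isometry for the $\Sigma$-term (which is what limits the order to $\Delta^3$), the $L^2$-increment bound for $Z$ in the \textbf{[OU]}-case, and Gr\"onwall on the stopped error.

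One small caveat, which the paper shares: in the \textbf{[OU]}-case $\E|Z_r-z|^2\le \theta^2(r-t_{i-1})^2(z-\mu)^2+\sigma^2(r-t_{i-1})$ depends on the starting value $z$. The constant is therefore uniform only if $z$ is also confined to the ball, as in the framework of Section~\ref{sec:fund-thm}, and not merely because the moments of $Z$ are uniform in time.
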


\begin{proof}
We first treat the \textbf{[BM]}-case and then the \textbf{[OU]}-case.

\paragraph*{The \textbf{[BM]}-case.} Let $1\leq i\leq n$.  In the computations the constants $C$ or $C_R$ may change from line to line, $C$ being independent of $R$. 
For any $t\in (t_{i-1},t_i]$, we have 
    \begin{equation}
        V_{t}^{t_{i-1},x}=v + \int\limits_{t_{i-1}}^{t} a(U_s^{t_{i-1},x}) V_s^{t_{i-1},x} ds + \int\limits_{t_{i-1}}^{t} b(U_s^{t_{i-1},x}) ds + \int\limits_{t_{i-1}}^{t} \Sigma(U_s^{t_{i-1},x}) dW_s, 
        \end{equation}
        and from \eqref{eq:LT2_BM_cont},
        \begin{equation}
        \overline{V}_{t}^{t_{i-1},x}=v + \int\limits_{t_{i-1}}^{t} a(u) \overline{V}_s^{t_{i-1},x} ds + \int\limits_{t_{i-1}}^{t} b(u) ds + \int\limits_{t_{i-1}}^{t} \Sigma(u) dW_s.
    \end{equation}
    Thus, we have that
    \begin{align}
    \label{eq:Vconsist-3pieces}
        & \left|  \overline{V}_{t\wedge\rho_R^{t_{i-1}}}^{t_{i-1},x} - V_{t\wedge\rho_R^{t_{i-1}}}^{t_{i-1},x} \right|^{2} \leq 
        C\left \{  \left|  \int_{t_{i-1}}^{t\wedge\rho_R^{t_{i-1}}} \left( a(u) \overline{V}_s^{t_{i-1},x}-a(U_s^{t_{i-1},x}) V_s^{t_{i-1},x} \right)   ds   \right|^{2}  \right. \notag \\ & \left. \qquad + \left| 
        \int_{t_{i-1}}^{t\wedge\rho_R^{t_{i-1}}} \left( b(u)-b(U_s^{t_{i-1},x}) \right)   ds \right|^{2} + \left| \int_{t_{i-1}}^{t\wedge\rho_R^{t_{i-1}}} \left( \Sigma(u)-\Sigma(U_s^{t_{i-1},x}) \right)   dW_s \right|^{2}  \right\}.
    \end{align}
    Using Hölder's inequality, for the first term we get
    \begin{equation}
         \left| \int_{t_{i-1}}^{t\wedge\rho_R^{t_{i-1}}} \big(a(u) \overline{V}_s^{t_{i-1},x}-a(U_s^{t_{i-1},x}) V_s^{t_{i-1},x}\big)  ds   \right|^{2} \\
          \leq \Delta \int_{t_{i-1}}^{t\wedge\rho_R^{t_{i-1}}} \left| a(u) \overline{V}_s^{t_{i-1},x}-a(U_s^{t_{i-1},x}) V_s^{t_{i-1},x}  \right|^{2} ds. 
    \end{equation}
    Moreover, we have
    \begin{align*}
        &\left| a(u) \overline{V}_s^{t_{i-1},x}-a(U_s^{t_{i-1},x}) V_s^{t_{i-1},x} \right|^2 \\ 
        & \qquad \leq  c\Big(\Big| a(u) \overline{V}_s^{t_{i-1},x}-a(u) V_s^{t_{i-1},x} \Big|^2 + \Big| a(u) V_s^{t_{i-1},x}-a(U_s^{t_{i-1},x}) V_s^{t_{i-1},x} \Big|^2\Big) \\
        & \qquad \leq c|a(u)|^2 \left| \overline{V}_s^{t_{i-1},x}- {V}_s^{t_{i-1},x} \right|^2 + c\left|{V}_s^{t_{i-1},x}\right|^2 \left| a(u)-a(U_s^{t_{i-1},x}) \right|^2. 
    \end{align*}
    In order to control the above quantity for any $t_{i-1}<s\leq t\wedge\rho_R^{t_{i-1}}$ we use several facts. First, $u\in[0,1]^d$ and $a$ is continuous so that $|a(u)|\leq C$.
    Second, ${V}_s^{t_{i-1},x}\in B_R(0)$ so that $|{V}_s^{t_{i-1},x}|\leq R$. Now, from the fact that $a\in C^1$ it is Lipschitz on the hypercube $[0,1]^d$ and using Lemma \ref{lemma:aux_result_new2} we get $| a(u)-a(U_s^{t_{i-1},x})|\leq C_R\Delta$. Thus gathering all these estimates, we get that
    \begin{align}
        \label{eq:consistV-1}
        &\left| \int_{t_{i-1}}^{t\wedge\rho_R^{t_{i-1}}} \big(a(u) \overline{V}_s^{t_{i-1},x}-a(U_s^{t_{i-1},x}) V_s^{t_{i-1},x}\big)  ds   \right|^{2} \\
        & \qquad \leq 
        C_R\left(\Delta^4+\int_{t_{i-1}}^{t\wedge\rho_R^{t_{i-1}}}
        \big| \overline{V}_s^{t_{i-1},x}- {V}_s^{t_{i-1},x} \big|^2ds\right).
    \end{align}        
    Using the fact that $b\in C^1$ and similar arguments as above 
    (in particular $| b(u)-b(U_s^{t_{i-1},x})|\leq C_R\Delta$),
    we can show that
    \begin{equation}
        \label{eq:consistV-2}
        \left|\int_{t_{i-1}}^{t\wedge\rho_R^{t_{i-1}}} \left( b(u)-b(U_s^{t_{i-1},x}) \right)   ds \right|^{2} \leq C_R\Delta^4.
    \end{equation}          
    Using now the Burkholder-Davis-Gundy and Hölder inequalities, and the fact that $\Sigma$ is of class $C^1$ (which entails $|\Sigma(u)-\Sigma(U_s^{t_{i-1},x})|\leq C_R\Delta$), we get
    \begin{equation}
    \label{eq:consistV-3}
        \ds\mathbb{E}\left[ \left| \int_{t_{i-1}}^{t\wedge\rho_R^{t_{i-1}}} \left( \Sigma(u)-\Sigma(U_s^{t_{i-1},x}) \right)  dW_s \right|^{2}  \right] \leq 
         \ds\mathbb{E}\left[  \int_{t_{i-1}}^{t\wedge\rho_R^{t_{i-1}}} \left| \Sigma(u)-\Sigma(U_s^{t_{i-1},x}) \right|^2  ds   \right] 
         \leq  C_R\Delta^{3}.
       \end{equation}
    Taking expectation of \eqref{eq:Vconsist-3pieces}, and using 
    \eqref{eq:consistV-1}, \eqref{eq:consistV-2} and \eqref{eq:consistV-3}, we get
    \begin{align*}
        \ds\E \left[ \Big| \overline{V}_{t\wedge\rho_R^{t_{i-1}}}^{t_{i-1},x} - V_{t\wedge\rho_R^{t_{i-1}}}^{t_{i-1},x} \Big|^2 \right] \leq &
        \ds \ C_R\left(\Delta^3+\E \left[ \int_{t_{i-1}}^{t\wedge\rho_R^{t_{i-1}}}
        \big| \overline{V}_s^{t_{i-1},x}- {V}_s^{t_{i-1},x} \big|^2ds \right] \right)
        \\
         \leq &\ds \ C_R\left(\Delta^3+\E \left[ \int_{t_{i-1}}^{t}
        \big| \overline{V}_{s\wedge\rho_R^{t_{i-1}}}^{t_{i-1},x}- {V}_{s\wedge\rho_R^{t_{i-1}}}^{t_{i-1},x} \big|^2ds \right] \right).
    \end{align*}
    It remains to use Fubini's theorem in order to get
    $$
    \E \left[ \Big| \overline{V}_{t\wedge\rho_R^{t_{i-1}}}^{t_{i-1},x} - V_{t\wedge\rho_R^{t_{i-1}}}^{t_{i-1},x} \Big|^2 \right] \leq 
    C_R\left(\Delta^3+\int_{t_{i-1}}^{t}
        \E \left[ \big| \overline{V}_{s\wedge\rho_R^{t_{i-1}}}^{t_{i-1},x}- {V}_{s\wedge\rho_R^{t_{i-1}}}^{t_{i-1},x} \big|^2 \right] ds\right).
    $$
    Now, noting that both processes $V^{t_{i-1},x}$ and $\overline{V}^{t_{i-1},x}$  stopped at $\rho_R^{t_{i-1}}$ are bounded, the statement follows by applying Grönwall's inequality. 

   \paragraph*{The \textbf{[OU]-case.}}
    For any $t\in(t_{i-1},t_i]$, we have  
    \begin{align*}
        V_{t}^{t_{i-1},x}=v + \int\limits_{t_{i-1}}^{t} a(U_s^{t_{i-1},x}) V_s^{t_{i-1},x} ds + \int\limits_{t_{i-1}}^{t} b(U_s^{t_{i-1},x}) ds + \int\limits_{t_{i-1}}^{t} \theta(\mu-Z_s^{t_{i-1},x}) ds + \int\limits_{t_{i-1}}^{t_i} \sigma dW_s, 
    \end{align*}
    and from \eqref{eq:LT2_OU_cont},
    \begin{align*}
        \overline{V}_{t}^{t_{i-1},x}=v + \int\limits_{t_{i-1}}^{t} a(u) \overline{V}_s^{t_{i-1},x} ds + \int\limits_{t_{i-1}}^{t} b(u) ds +\int\limits_{t_{i-1}}^{t} \theta(\mu-z) ds + \int\limits_{t_{i-1}}^{t} \sigma dW_s.
    \end{align*}
    Similar to before, we thus have that 
    \begin{align*}
        & \left|  \overline{V}_{t\wedge\rho_R^{t_{i-1}}}^{t_{i-1},x} - V_{t\wedge\rho_R^{t_{i-1}}}^{t_{i-1},x} \right|^{2}  \leq\left \{  \left| \int_{t_{i-1}}^{t\wedge\rho_R^{t_{i-1}}} \left( a(u) \overline{V}_s^{t_{i-1},x}-a(U_s^{t_{i-1},x}) V_s^{t_{i-1},x} \right)  ds   \right|^{2p} \right. \notag \\ & \left. \qquad + \left| \int_{t_{i-1}}^{t\wedge\rho_R^{t_{i-1}}} \left( b(u)-b(U_s^{t_{i-1},x}) \right) ds \right|^{2} + \theta^{2} \left| \int_{t_{i-1}}^{t\wedge\rho_R^{t_{i-1}}} (z-Z_s^{t_{i-1},x}) ds \right|^{2}  \right\}.
    \end{align*}
    Consider the third term, for which we obtain from Hölder's inequality and Lemma \ref{lemma:boundedMomentsDifferencesZ} (cf. Appendix~\ref{appA}), that
    \begin{equation*}
        \mathbb{E} \left[\left( \int_{t_{i-1}}^{t\wedge\rho_R^{t_{i-1}}} 
        \left| z-Z_s^{t_{i-1},x} \right|  ds \right)^{2} \right]  \leq \Delta \int_{t_{i-1}}^{t\wedge\rho_R^{t_{i-1}}} \underbrace{\mathbb{E}\left[ \left| z-Z_s^{t_{i-1},x} \right|^{2} \right]}_{\leq C\Delta} ds \leq C\Delta^{3}.
    \end{equation*}
    Thus, the result can be shown analogously to the $\textbf{[BM]}$-case.
\end{proof}

An analogous local consistency result is established for the $U$-component, relying on the corresponding local consistency result for the $V$-component (Proposition~\ref{thm:consistency_V_local}).

\begin{proposition}[Local consistency for $\overline{U}$]\label{thm:consistency_U_local}
    Consider the continuous-time versions~\eqref{eq:splittings_BM_cont} and \eqref{eq:splittings_OU_cont} of the second Lie-Trotter splitting scheme for System \eqref{eq:HH_redefined}\textbf{[BM]} and \eqref{eq:HH_redefined}\textbf{[OU]}, respectively. For any $R>0$, it holds that for any $i=1,\ldots,n$ and any 
      $x\in B_R(0)\times[0,1]^d$, 
    \begin{equation*}
        \mathbb{E}\left[ \; \left| {U}_{t\wedge\rho_R^{t_{i-1}}}^{t_{i-1},x} - \overline{U}_{t\wedge\rho_R^{t_{i-1}}}^{t_{i-1},x} \right|^{2} \; \right]  \leq C_R \Delta^{3}, \quad \forall t\in (t_{i-1},t_i],
    \end{equation*}
    where the constant $C_R>0$ depends on $R$, $T$ and the coefficients. 

    As a consequence, for any $R>0$, the second Lie-Trotter splitting scheme is locally consistent of order $q=3/2$ on $B_R(0)\times[0,1]^d$ for the $U$-component.
\end{proposition}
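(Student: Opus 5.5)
We mirror the proof of Proposition~\ref{thm:consistency_V_local}, now for the $U$-component. Let $1\leq i\leq n$, $x=(v,u^T)^T\in B_R(0)\times[0,1]^d$ and $t\in(t_{i-1},t_i]$. By \eqref{eq:HH_redefined}, for the exact solution we have
\begin{equation*}
U_t^{t_{i-1},x}=u+\int_{t_{i-1}}^{t}\big[f(V_s^{t_{i-1},x})U_s^{t_{i-1},x}+\balpha(V_s^{t_{i-1},x})\big]ds .
\end{equation*}
By \eqref{eq:LT2_BM_cont} (or \eqref{eq:LT2_OU_cont}), for the scheme we have
\begin{equation*}
\overline{U}_t^{t_{i-1},x}=u+\int_{t_{i-1}}^{t}\big[f(\overline{V}_t^{t_{i-1},x})\overline{U}_s^{t_{i-1},x}+\balpha(\overline{V}_t^{t_{i-1},x})\big]ds .
\end{equation*}
Here $f$ is as in \eqref{eq:f_v}. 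In the scheme the frozen voltage is the endpoint value $\overline{V}_t$, not $\overline{V}_s$. We subtract the two identities at time $t\wedge\rho_R^{t_{i-1}}$ and apply Hölder's inequality to the resulting $ds$-integral. This gives a bound by $\Delta\int_{t_{i-1}}^{t\wedge\rho_R^{t_{i-1}}}|\cdot|^2ds$ of the squared integrand difference.

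We split the integrand difference into three pieces:
\begin{equation*}
f(\overline{V}_t)(\overline{U}_s-U_s)+\big(f(\overline{V}_t)-f(V_s)\big)U_s+\big(\balpha(\overline{V}_t)-\balpha(V_s)\big).
\end{equation*}
Before $\rho_R^{t_{i-1}}$, both $V$ and $\overline{V}$ lie in $B_R(0)$, and both $U$ and $\overline{U}$ lie in $[0,1]^d$ (Theorem~\ref{thm:bounded_moments_true_BM} and Proposition~\ref{rem:stateSpacePreservation}). Since $f,\balpha\in C^1$, they are bounded and Lipschitz on $B_R(0)$ with constant $C_R$.

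\textbf{First piece.} It is bounded by $C_R|\overline{U}_s-U_s|^2$ and will be absorbed by Grönwall's inequality.

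\textbf{Second and third pieces.} They are bounded by $C_R|\overline{V}_t-V_s|^2$. We write $\overline{V}_t-V_s=(\overline{V}_t-V_t)+(V_t-V_s)$, with all quantities stopped at $\rho_R^{t_{i-1}}$.
\begin{itemize}
\item The term $\E|\overline{V}_{t\wedge\rho}-V_{t\wedge\rho}|^2\leq C_R\Delta^3$ follows from Proposition~\ref{thm:consistency_V_local}.
\item The increment term $\E|V_{t\wedge\rho}-V_{s\wedge\rho}|^2\leq C(\Delta+\Delta^2)$ follows from Lemma~\ref{lemma:boundedMomentsDifferencesV}, applied with the stopping time $\rho=t\wedge\rho_R^{t_{i-1}}\vee s$.
\end{itemize}
Taking expectations and applying Fubini's theorem, we obtain
\begin{equation*}
\E\big|U_{t\wedge\rho}-\overline{U}_{t\wedge\rho}\big|^2\leq \Delta\Big(C_R\Delta(\Delta^3+\Delta)+C_R\int_{t_{i-1}}^t\E\big|U_{s\wedge\rho}-\overline{U}_{s\wedge\rho}\big|^2ds\Big)\leq C_R\Delta^3+C_R\Delta\int_{t_{i-1}}^t\E\big|U_{s\wedge\rho}-\overline{U}_{s\wedge\rho}\big|^2ds .
\end{equation*}
Grönwall's inequality then yields the bound $C_R\Delta^3$, since the stopped processes are bounded and the function being controlled is therefore finite. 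In the \textbf{[OU]}-case the $U$-equation does not involve $Z$, so the same argument applies, with Lemma~\ref{lemma:boundedMomentsDifferencesV} used in its \textbf{[OU]} version.

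\textbf{Main obstacle.} The delicate step is handling the evaluation of $\overline{V}$ at the endpoint $t$ inside an integral over $s$, combined with stopping. The quantity $\overline{V}_t$ is not adapted at time $s$, but since we only use pathwise Hölder estimates and no martingale property, this is harmless. The subtle point is that the indicator of $\{s\leq\rho_R^{t_{i-1}}\}$ does not guarantee $|\overline{V}_t|\leq R$. We handle this by working with $\overline{V}_{t\wedge\rho}$, which is the quantity that actually appears once the scheme is stopped. Alternatively, we can use the global Lipschitz bound of $\psi_{\textrm{D}}$ in $v$ uniformly on $[0,1]^d$ combined with the moment bound of Proposition~\ref{thm:boundedMomentsTildeV}. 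The key order count is $\Delta\cdot\Delta\cdot\Delta=\Delta^3$, coming from the Hölder factor, the integration length, and the $O(\Delta)$ increment of $V$. Consequently, the Brownian-scale fluctuation of $V$ still gives $q=3/2$ for $U$.
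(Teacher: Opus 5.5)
Your proposal is correct and follows the paper's proof essentially step by step. The common structure is H\"older on the $ds$-integral, then a split of the integrand into a $|\overline{U}_s-U_s|$ part absorbed by Gr\"onwall and a $|\overline{V}_{t\wedge\rho_R^{t_{i-1}}}-V_s|$ part controlled via Proposition~\ref{thm:consistency_V_local} and Lemma~\ref{lemma:boundedMomentsDifferencesV}, then Fubini and Gr\"onwall; your use of the stopping time $(t\wedge\rho_R^{t_{i-1}})\vee s$ in the increment lemma is, if anything, more careful than the paper's.

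One caveat you inherit from the paper's display \eqref{eq:LT2_BM_cont}: strictly, the integrand in the representation of $\overline{U}_{t}$ is the frozen-voltage flow $\psi_{\textrm{D}}(s-t_{i-1};\overline{V}_{t\wedge\rho_R^{t_{i-1}}},u)$, not the scheme value $\overline{U}_s$. The Gr\"onwall step is therefore cleanest done pathwise in $s$ for fixed $t$, which gives the same $C_R\Delta^3$. Also, your aside about a global Lipschitz bound of $\psi_{\textrm{D}}$ in $v$ is not available under the mere $C^1$ assumption on $\balpha,\bbeta$, though your argument never needs it.
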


\begin{proof}
We first treat the  \textbf{[BM]}-case and then the \textbf{[OU]}-case.
\vspace{-0.3cm}
\paragraph*{The \textbf{[BM]-case.}}
For any $t\in (t_{i-1},t_i]$, we have (from \eqref{eq:LT2_BM_cont})
    \begin{align}
         U_{t}^{t_{i-1},x} &= u +\int_{t_{i-1}}^{t} \big[f(V_s^{t_{i-1},x})U^{t_{i-1},x}_s+\balpha(V_s^{t_{i-1},x}) \big] ds, \\ 
         \label{eq:overlineU_LT1}\overline{U}_{t}^{t_{i-1},x} &= u +\int_{t_{i-1}}^{t} \big[f(\overline{V}_t^{t_{i-1},x})\overline{U}^{t_{i-1},x}_s+\balpha(\overline{V}_t^{t_{i-1},x}) \big] ds.
    \end{align}
Thus, we obtain 
\begin{align}
\label{eq:consis-U-1}
\left|\overline{U}_{t\wedge\rho_R^{t_{i-1}}}^{t_{i-1},x}-U_{t\wedge\rho_R^{t_{i-1}}}^{t_{i-1},x}\right|^{2}  & \leq C\left\{ \left| \int_{t_{i-1}}^{t\wedge\rho_R^{t_{i-1}}} \left( f(\overline{V}_{t\wedge\rho_R^{t_{i-1}}}^{t_{i-1},x}) \overline{U}_s^{t_{i-1},x} - f\bigl( V_s^{t_{i-1},x} \bigr) U_s^{t_{i-1},x} \right) ds\right|^{2} \right. \notag \\ 
    & \left.  \qquad   + \left| \int_{t_{i-1}}^{t\wedge\rho_R^{t_{i-1}}} \left( \balpha \bigl( \overline{V}_{t\wedge\rho_R^{t_{i-1}}}^{t_{i-1},x} \bigr) -  \balpha \bigl( V_s^{t_{i-1},x} \bigr) \right) ds \right|^{2}\right\}. 
\end{align}
Using H\"older's inequality, for the first term we get
\begin{align*}
    &
    \left| \int_{t_{i-1}}^{t\wedge\rho_R^{t_{i-1}}} \left( f(\overline{V}_{t\wedge\rho_R^{t_{i-1}}}^{t_{i-1},x}) \overline{U}_s^{t_{i-1},x} - f\bigl( V_s^{t_{i-1},x} \bigr) U_s^{t_{i-1},x} \right) ds\right|^{2}\\
    & \qquad \leq \Delta \int_{t_{i-1}}^{t\wedge\rho_R^{t_{i-1}}} \left|f(\overline{V}_{t\wedge\rho_R^{t_{i-1}}}^{t_{i-1},x}) \overline{U}_s^{t_{i-1},x} - f\bigl( V_s^{t_{i-1},x} \bigr) U_s^{t_{i-1},x}\right|^{2}  ds.
\end{align*}
 Thus, using Proposition \ref{rem:stateSpacePreservation} which entails $|\overline{U}_s^{t_{i-1},x}|\leq \sqrt{d}$, and the fact that  $f$ is of class $C^1$, we  obtain for any $t_{i-1}\leq s\leq \rho_R^{t_{i-1}}$ (remember also that 
 $V_s^{t_{i-1},x},\overline{V}_s^{t_{i-1},x}\in B_R(0)$),
\begin{align*}
    &  \big|\,f(\overline{V}_{t\wedge\rho_R^{t_{i-1}}}^{t_{i-1},x}) \overline{U}_s^{t_{i-1},x} - f\bigl( V_s^{t_{i-1},x} \bigr) U_s^{t_{i-1},x}\,\big|\\
    & \qquad \leq \left|f(\overline{V}_{t\wedge\rho_R^{t_{i-1}}}^{t_{i-1},x}) \overline{U}_s^{t_{i-1},x} -f(V_s^{t_{i-1},x})\overline{U}_s^{t_{i-1},x}\right|  +\left|f(V_s^{t_{i-1},x})\overline{U}_s^{t_{i-1},x}- f\bigl( V_s^{t_{i-1},x} \bigr) {U}_s^{t_{i-1},x}\right|\\
    & \qquad \leq \left|\overline{U}_s^{t_{i-1},x}\right|\left|f(\overline{V}_{t\wedge\rho_R^{t_{i-1}}}^{t_{i-1},x})-f\bigl( {V}_s^{t_{i-1},x} \bigr) \right|+ \left|f(V_s^{t_{i-1},x})\right|\left|\overline{U}_s^{t_{i-1},x}-U_s^{t_{i-1},x}\right| \\
    & \qquad \leq C_R \left\{   \left| \overline{V}_{t\wedge\rho_R^{t_{i-1}}}^{t_{i-1},x} -  {V}_{t\wedge\rho_R^{t_{i-1}}}^{t_{i-1},x} \right| +  \left| {V}_{t\wedge\rho_R^{t_{i-1}}}^{t_{i-1},x} -  {V}_s^{t_{i-1},x} \right| + \left|\overline{U}_s^{t_{i-1},x}-U_s^{t_{i-1},x}\right|\right\}.
\end{align*}
Thus, applying Proposition \ref{thm:consistency_V_local} and Lemma \ref{lemma:boundedMomentsDifferencesV} gives that 
\begin{align*}
    &\mathbb{E} \left[ \Big| \int_{t_{i-1}}^{t\wedge\rho_R^{t_{i-1}}} \Big( f(\overline{V}_{t\wedge\rho_R^{t_{i-1}}}^{t_{i-1},x}) \overline{U}_s^{t_{i-1},x} - f\bigl( V_s^{t_{i-1},x} \bigr) U_s^{t_{i-1},x} \Big) ds\Big|^{2} \right] \\
    & \qquad \leq  C_R \Delta \mathbb{E}\Big[\int_{t_{i-1}}^{t\wedge\rho_R^{t_{i-1}}}  \Big|\overline{V}_{t\wedge\rho_R^{t_{i-1}}}^{t_{i-1},x}-{V}_{t\wedge\rho_R^{t_{i-1}}}^{t_{i-1},x}\Big|^{2}  ds \Big] + C_R \Delta \mathbb{E}\Big[\int_{t_{i-1}}^{t\wedge\rho_R^{t_{i-1}}}  \Big|{V}_{t\wedge\rho_R^{t_{i-1}}}^{t_{i-1},x}-{V}_s^{t_{i-1},x}\Big|^2 ds \Big] \\
    & \qquad \qquad + C_R \; \mathbb{E}\Big[\int_{t_{i-1}}^{t\wedge\rho_R^{t_{i-1}}}  \Big|\overline{U}_s^{t_{i-1},x}-{U}_s^{t_{i-1},x}\Big|^{2p} ds \Big]\\
    & \qquad \leq  C_R \Delta \mathbb{E}\Big[\int_{t_{i-1}}^{t_i}  \Big|\overline{V}_{t\wedge\rho_R^{t_{i-1}}}^{t_{i-1},x}-{V}_{t\wedge\rho_R^{t_{i-1}}}^{t_{i-1},x}\Big|^{2} ds \Big] + C_R \Delta \mathbb{E}\Big[\int_{t_{i-1}}^{t_i}  \Big|{V}_{t\wedge\rho_R^{t_{i-1}}}^{t_{i-1},x}-{V}_s^{t_{i-1},x}\Big|^2 ds \Big]  \\
    & \qquad \qquad + C_R \; \mathbb{E}\Big[\int_{t_{i-1}}^{t}  \Big|\overline{U}_{s\wedge\rho_R^{t_{i-1}}}^{t_{i-1},x}-{U}_{s\wedge\rho_R^{t_{i-1}}}^{t_{i-1},x}\Big|^{2p} ds \Big] 
\end{align*}
Moreover, using Fubini's theorem, we further obtain that
\begin{align*}
    &\mathbb{E} \left[ \Big| \int_{t_{i-1}}^{t\wedge\rho_R^{t_{i-1}}} \Big( f(\overline{V}_{t\wedge\rho_R^{t_{i-1}}}^{t_{i-1},x}) \overline{U}_s^{t_{i-1},x} - f\bigl( V_s^{t_{i-1},x} \bigr) U_s^{t_{i-1},x} \Big) ds\Big|^{2} \right] \\
    & \qquad \leq  C_R \Delta \int_{t_{i-1}}^{t_i}  \mathbb{E}\Big[\Big|\overline{V}_{t\wedge\rho_R^{t_{i-1}}}^{t_{i-1},x}-{V}_{t\wedge\rho_R^{t_{i-1}}}^{t_{i-1},x}\Big|^{2}  \Big] ds  + C_R \Delta \int_{t_{i-1}}^{t_i}  \mathbb{E}\Big[\Big|{V}_{t\wedge\rho_R^{t_{i-1}}}^{t_{i-1},x}-{V}_s^{t_{i-1},x}\Big|^2 \Big] ds  \\
    & \qquad \qquad + C_R \; \int_{t_{i-1}}^{t} \mathbb{E}\Big[ \Big|\overline{U}_{s\wedge\rho_R^{t_{i-1}}}^{t_{i-1},x}-{U}_{s\wedge\rho_R^{t_{i-1}}}^{t_{i-1},x}\Big|^{2p}  \Big] ds \\
    & \qquad \leq C_R\Delta^{3} + C_R \int_{t_{i-1}}^{t} \mathbb{E}\Big[ \Big|\overline{U}_{s\wedge\rho_R^{t_{i-1}}}^{t_{i-1},x}-{U}_{s\wedge\rho_R^{t_{i-1}}}^{t_{i-1},x}\Big|^{2} \Big] ds.
\end{align*}

We now turn to the second term in \eqref{eq:consis-U-1}. In the same fashion as above, using Hölder's inequality and this time the fact that $\balpha$ is $C^1$, we obtain
\begin{align*}
    & 
    \left| \int_{t_{i-1}}^{t\wedge\rho_R^{t_{i-1}}} \left( \balpha \bigl( \overline{V}_{t\wedge\rho_R^{t_{i-1}}}^{t_{i-1},x} \bigr) -  \balpha \bigl( V_s^{t_{i-1},x} \bigr) \right) ds \right|^{2}
    \leq \Delta \int_{t_{i-1}}^{t\wedge\rho_R^{t_{i-1}}} \Big| \balpha(\overline{V}_{t\wedge\rho_R^{t_{i-1}}}^{t_{i-1},x})-\balpha\bigl( V_s^{t_{i-1},x} \bigr) \Big|^{2}  ds\\
    &\qquad \leq 
    C_R\,\Delta \int_{t_{i-1}}^{t\wedge\rho_R^{t_{i-1}}}\left\{ \left| \overline{V}_{t\wedge\rho_R^{t_{i-1}}}^{t_{i-1},x} - V_{t\wedge\rho_R^{t_{i-1}}}^{t_{i-1},x} \right|^2 + \left| {V}_{t\wedge\rho_R^{t_{i-1}}}^{t_{i-1},x} - V_s^{t_{i-1},x} \right|^2\right\}ds.
\end{align*}
Applying again Proposition \ref{thm:consistency_V_local} and Lemma \ref{lemma:boundedMomentsDifferencesV} gives that 
\begin{align*}
    &\mathbb{E}\left[ 
    \Big| \int_{t_{i-1}}^{t\wedge\rho_R^{t_{i-1}}} \Big( \balpha \bigl( \overline{V}_{t\wedge\rho_R^{t_{i-1}}}^{t_{i-1},x} \bigr) -  \balpha \bigl( V_s^{t_{i-1},x} \bigr) \Big) ds \Big|^{2}
    \right] \\ 
    & \qquad \leq  C_R \Delta \int_{t_{i-1}}^{t_i} 
    \mathbb{E}\Big[\Big|\overline{V}_{t\wedge\rho_R^{t_{i-1}}}^{t_{i-1},x}-{V}_{t\wedge\rho_R^{t_{i-1}}}^{t_{i-1},x}\Big|^{2}  \Big] 
    ds 
    + C_R \Delta \int_{t_{i-1}}^{t_i} 
     \mathbb{E}\Big[\Big|{V}_{t\wedge\rho_R^{t_{i-1}}}^{t_{i-1},x}-{V}_s^{t_{i-1},x}\Big|^2 \Big]
    ds \\
    & \qquad \leq C\Delta^{3}.
\end{align*}
Combining everything, for all $t \in (t_{i-1},t_i]$, we obtain
\begin{align*}
    \mathbb{E}\Big[ \; \big| \overline{U}_{t\wedge\rho_R^{t_{i-1}}}^{t_{i-1},x} - {U}_{t\wedge\rho_R^{t_{i-1}}}^{t_{i-1},x} \big|^{2} \; \Big] 
    \leq C_R\left(\Delta^3+\int_{t_{i-1}}^{t}
        \E \Big[ \big| \overline{U}_{s\wedge\rho_R^{t_{i-1}}}^{t_{i-1},x}- {U}_{s\wedge\rho_R^{t_{i-1}}}^{t_{i-1},x} \big|^2 \Big] ds\right).
\end{align*}
Finally, applying Grönwall's inequality, proves the statement for the second Lie-Trotter scheme. 

\paragraph*{The [\textbf{OU}]-case.}

Since the $U$-component of the splitting scheme admits the same expression in the [\textbf{OU}]-case as in the [\textbf{BM}]-case, the result follows analogously.
\end{proof}

We now combine the previous results to conclude that, for any $R>0$, the second Lie-Trotter splitting scheme is locally consistent of order $q=3/2$ on  $B_R(0)\times[0,1]^d$.

\begin{proposition}[Local consistency for $\overline{X}$]\label{thm:consistency_local}
    Consider the continuous-time versions \eqref{eq:splittings_BM_cont} and \eqref{eq:splittings_OU_cont} of the second Lie-Trotter splitting scheme for System \eqref{eq:HH_redefined}\textbf{[BM]} and \eqref{eq:HH_redefined}\textbf{[OU]}, respectively. For any $R>0$, it holds that for $i=1,\ldots,n$ and any $x\in B_R(0) \times [0,1]^d$,  
    \begin{equation*}
        \left( \mathbb{E} \left[ \Big| {X}_{t_i \wedge \rho_R^{t_{i-1}}}^{t_{i-1},x} - \overline{X}_{t_i \wedge \rho_R^{t_{i-1}}}^{t_{i-1},x} \Big|^2 \right] \right)^{1/2} \leq C_R \Delta^{3/2},
    \end{equation*}
    where the constant $C_R>0$ depends on $R$, $T$ and the coefficients. 
\end{proposition}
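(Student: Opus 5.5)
The bound follows by assembling Propositions \ref{thm:consistency_V_local} and \ref{thm:consistency_U_local}, since the Euclidean norm of $X-\overline{X}$ splits componentwise. In the \textbf{[BM]}-case, $X=(V,U^T)^T$ and $\overline{X}=(\overline{V},\overline{U}^T)^T$. For every $\omega$ and every $t$ we have
\begin{equation*}
\Big|X_{t}^{t_{i-1},x}-\overline{X}_{t}^{t_{i-1},x}\Big|^2=\Big|V_{t}^{t_{i-1},x}-\overline{V}_{t}^{t_{i-1},x}\Big|^2+\Big|U_{t}^{t_{i-1},x}-\overline{U}_{t}^{t_{i-1},x}\Big|^2 .
\end{equation*}
We evaluate this identity at the random time $t_i\wedge\rho_R^{t_{i-1}}$ and take expectations. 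Then we apply Proposition \ref{thm:consistency_V_local} and Proposition \ref{thm:consistency_U_local} with $t=t_i\in(t_{i-1},t_i]$. Both are valid for any $x\in B_R(0)\times[0,1]^d$ and any $i=1,\ldots,n$, and each gives a bound $C_R\Delta^3$. Summing yields $\E[|X-\overline{X}|^2]\le 2C_R\Delta^3$ at the stopped time, and taking square roots gives the claimed $C_R\Delta^{3/2}$ after renaming the constant.

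Before combining, I would check that the stopping times match. In this section, $\rho_R^{t_{i-1}}$ is defined through $|V|$ and $|\overline{V}|$ only, not through the full vector. It is the same stopping time in both propositions and in the present statement, so the two bounds can be added directly. I would also remark why this is equivalent to Definition \ref{def:one_step_consistency}. Since $U,\overline{U}\in[0,1]^d$ (Theorem \ref{thm:bounded_moments_true_BM} and Proposition \ref{rem:stateSpacePreservation}), we have $|X|\le |V|+\sqrt d$ and $|V|\le|X|$. Hence localizing on $|V|<R$ is the same as localizing $X$ in a compact set contained in $B_{R+\sqrt d}(0)$, and conversely. So the statement is the local consistency of order $q=3/2$ needed in Theorems \ref{thm:conv-speed-loc} and \ref{thm:conv-nospeed}, up to renaming $R$.

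In the \textbf{[OU]}-case there is an extra $Z$-component. The scheme \eqref{eq:LT2_OU_cont} propagates $Z$ by its exact OU dynamics, and the same Brownian motion drives both. Hence $Z^{t_{i-1},x}_t=\overline{Z}^{t_{i-1},x}_t$ pathwise, and this component contributes zero to the error. The identity above therefore still holds with only the $V$ and $U$ terms, and the argument goes through unchanged.

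The main step needing care is this bookkeeping: checking that the stopping times and the domain $B_R(0)\times[0,1]^d$ agree across the propositions, and that the $Z$-part cancels. The analytic work was already done in the two component propositions.
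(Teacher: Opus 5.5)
Your proposal is correct and is the paper's argument: the paper also obtains the bound directly by summing the componentwise estimates of Propositions \ref{thm:consistency_V_local} and \ref{thm:consistency_U_local} at $t=t_i$. Your checks that the $V$-based stopping time is the same in all three statements and that the $Z$-error vanishes pathwise in the \textbf{[OU]}-case are correct bookkeeping that the paper leaves implicit. One loose point is your side remark on Definition \ref{def:one_step_consistency}: the sandwich $\rho^X_R\le\rho^V_R\le\rho^X_{R+\sqrt d}$ does not by itself transfer a bound from one stopping time to another; what actually works is that the component proofs go through verbatim for any stopping time below the $V$-based one, and in any case this is not needed for the statement itself.
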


\begin{proof}
    This result follows directly from Propositions \ref{thm:consistency_V_local} and \ref{thm:consistency_U_local}.
\end{proof}

\begin{remark}\label{rem:add_mult_noise_det}
    In the case of additive noise for System \eqref{eq:HH_redefined}\textbf{[BM]}, that is $\Sigma(u)\equiv \sigma>0$, it follows from the proof of Proposition \ref{thm:consistency_V_local} that local consistency with rate $q=2$ is achieved for the $V$-component as the third term in \eqref{eq:Vconsist-3pieces} disappears. For the $U$-component, however, the rate remains $q=3/2$ even in this setting. By contrast, in the deterministic case, a rate of $q=2$ can be achieved for both components of the system, as Lemma \ref{lemma:boundedMomentsDifferencesV} (which is used in the proof of Proposition \ref{thm:consistency_U_local}) can be modified accordingly. These theoretical findings are in agreement with the convergence behaviour observed in numerical simulations (cf.~Section \ref{sec:num_exp}).
\end{remark}

\begin{remark}
    Note that the order in which the splitting methods are composed affects the order of the local consistency proofs. For example, for the first Lie-Trotter splitting, local consistency must first be established for $\overline{U}$ and then used to prove local consistency for $\overline{V}$. In the case of the Strang splitting, the proof proceeds in three corresponding steps. Nevertheless, each step can be proved in a similar way as in Propositions \ref{thm:consistency_V_local} and \ref{thm:consistency_U_local} above.
\end{remark}

\paragraph*{Mean-square convergence of the splitting methods.}

Proposition \ref{thm:consistency_local}, together with Theorem~\ref{thm:conv-speed-loc} (cf. Section \ref{sec:fund-thm}), immediately implies that
\begin{equation*}
    \left( \mathbb{E}\left[ \Big| X_{t_i}-\overline{X}_{t_i} \Big|^2 \mathds{1}_{t_i < \rho_R} \right]  \right)^{1/2} \leq C \Delta^{1/2},
\end{equation*}
suggesting a mean-square convergence rate of $1/2$ (resp. $1$ for the deterministic case) for the proposed splitting schemes, provided that the processes remain in a compact set. 
Indeed, this rate is observed in the numerical experiments on stochastic Hodgkin-Huxley systems presented in~Section~\ref{sec:num_exp}. 

In the following theorem, we establish (global) mean-square convergence of the splitting~schemes.

\begin{theorem}
    Let $X$ be the solution of System \eqref{eq:HH_redefined}\textbf{[BM]} and \eqref{eq:HH_redefined}\textbf{[OU]}, respectively, and $\overline{X}$ be the corresponding second Lie-Trotter splitting scheme. Then, it holds that
    \begin{equation*}
        \sup_{i=0,\ldots,n}\E\left[ |X_{t_i}-\overline{X}_{t_i}|^2 \right] \xrightarrow[\Delta\downarrow 0]{}0,
    \end{equation*}
    that is, the splitting scheme is mean-square convergent.
\end{theorem}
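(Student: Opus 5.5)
The plan is to verify the hypotheses of Theorem~\ref{thm:conv-nospeed} and conclude directly from it. We work with the second Lie-Trotter scheme in its continuous-time form \eqref{eq:splittings_BM_cont} (resp. \eqref{eq:splittings_OU_cont}). This process has the structure \eqref{eq:def-scheme-gene}: on $(t_{i-1},t]$ it is a Borel function of $\overline{X}_{t_{i-1}}$ and of a stochastic integral built from the Brownian increments after $t_{i-1}$. It agrees with the discrete scheme at the grid points, so proving convergence for $\overline{X}$ gives the claim.

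The verification runs through the hypotheses in turn.

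First, the coefficients $\mathbf{b},\bsigma$ of \eqref{eq:HH_Xfashion} (resp. $\mathbf{b}_{OU},\bsigma_{OU}$ of \eqref{eq:HHOU_Xfashion}) are locally Lipschitz, since they are built from $C^1$ functions.

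Second, a unique strong solution exists and has bounded moments of every order $2p$. For $V$ this is Theorem~\ref{thm:bounded_moments_true_BM} (resp. Theorem~\ref{thm:existence-and-bounds-OU}), and $U$ stays in $[0,1]^d$, so $|U|\le\sqrt d$. In the [OU]-case, $Z$ is controlled by \eqref{eq:Z-pmoments-crude}. Combining these gives \eqref{eq:bounded-2-moments-sol} for, say, $p=2$.

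Third, the scheme has bounded moments of the same order. For $\overline{V}$ this is Proposition~\ref{thm:boundedMomentsTildeV}; $\overline{U}$ is bounded by Proposition~\ref{rem:stateSpacePreservation}; and in the [OU]-case the $Z$-component of the scheme is the exact OU process, so the same moment bound applies. Together these give \eqref{eq:bounded-2-moments}.

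Fourth, local consistency of order $q=3/2>1$ on every ball is Proposition~\ref{thm:consistency_local}.

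Some care is needed in matching the localization. Proposition~\ref{thm:consistency_local} is stated on $B_R(0)\times[0,1]^d$, with stopping times that track only the $V$-component, whereas Definition~\ref{def:one_step_consistency} uses balls $B_R(0)$ in the full space and stopping times for $|X|$. Since $U$ and $\overline U$ remain in $[0,1]^d$, we have $\{|X_t|<R\}\subset\{|V_t|<R\}$, and any $x$ in the full ball lies in $B_R(0)\times[0,1]^d$. The $V$-based stopping time is therefore at least the $X$-based one, and the estimates of Propositions~\ref{thm:consistency_V_local} and~\ref{thm:consistency_U_local} hold for every $t\in(t_{i-1},t_i]$. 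Stopping at the earlier time only reduces the relevant integrals, so the bounds carry over; if needed, one reruns the same Grönwall argument with the smaller stopping time. Alternatively, one can redo the proof of Theorem~\ref{thm:conv-nospeed} directly with the $V$-based stopping times, using $\P(\rho_R^V\le t)\le R^{-2p}\E[\max_t|V_t|^{2p}]$.

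In the [OU]-case the $Z$-component adds one more coordinate. The scheme's $Z$ coincides with the exact OU process pathwise, because both are driven by the same $W$ and start from the same point, so its consistency error is zero. Localization in $Z$ is handled in the same way as for $V$.

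Once all hypotheses are checked, Theorem~\ref{thm:conv-nospeed} yields $\sup_i\E|X_{t_i}-\overline X_{t_i}|^2\to0$. The only genuine obstacle is the matching of localizations just described; every other step is a citation of results already established.
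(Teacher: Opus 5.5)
Your proposal is correct and is essentially the paper's proof: the paper likewise verifies the hypotheses of Theorem~\ref{thm:conv-nospeed} by combining Theorem~\ref{thm:bounded_moments_true_BM} (resp.\ Theorem~\ref{thm:existence-and-bounds-OU}), Proposition~\ref{thm:boundedMomentsTildeV} and Proposition~\ref{thm:consistency_local}. Your extra discussion of how the $V$-based stopping times and the $Z$-coordinate fit Definition~\ref{def:one_step_consistency} makes explicit a point the paper passes over; note only that the inclusion into $B_R(0)\times[0,1]^d$ holds for starting points in the state space $\R\times[0,1]^d$, not for every point of the full ball, and that restricted case is all the conditioning step in the proof of Theorem~\ref{thm:conv-speed-loc} uses.
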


\begin{proof}
    The statement follows by combining Theorem \ref{thm:bounded_moments_true_BM} (resp. Theorem \ref{thm:existence-and-bounds-OU}), Proposition \ref{thm:boundedMomentsTildeV}, Proposition~\ref{thm:consistency_local} (which implies Theorem \ref{thm:conv-speed-loc}) and Theorem~\ref{thm:conv-nospeed} (cf.~Section \ref{sec:fund-thm}).
\end{proof}

\subsection{Ergodicity}
\label{sec:ergodicity_splitting}

In this section, we prove that the constructed splitting methods are geometrically ergodic, in the sense of \cite{Mattingly2002} and \cite{Ableidinger2017}. According to their analyses, proving this property boils down to verifying that the respective numerical solution satisfies three conditions, a \textit{Lyapunov}, a \textit{smoothness}, and an \textit{irreducibility} condition. Note that, while the latter two are typically satisfied by standard approximation methods, such as Euler-Maruyama, the Lyapunov condition may not be met by standard numerical methods in the locally Lipschitz case.

\paragraph*{Lyapunov condition.}

We start by verifying that the constructed splitting methods satisfy the following property.

\begin{proposition}[Discrete Lyapunov condition]\label{lemma:lyapunov_BM_HH}
     Consider the function $L:\mathbb{R} \times [0,1]^d \to [1,\infty)$ given~by 
    \[
        L(x) = 1 + |v|,
    \]
    which satisfies $L(x)\to \infty$ as $|v| \to \infty$. Further, let  $(\widetilde{X}_{t_i})_{i=0,\ldots,n}$ be a splitting solution of System~\eqref{eq:HH_redefined}\textbf{[BM]} or \eqref{eq:HH_redefined}\textbf{[OU]}, defined through 
    \eqref{eq:splittings_BM} or \eqref{eq:splittings_OU}, respectively. Then, it holds that
    \begin{align*} 
    \mathbb{E}\left[L\bigl(\widetilde{X}_{t_i}\bigr) \bigm| \widetilde{X}_{t_{i-1}}\right] &\leq C_1 L\bigl( \widetilde{X}_{t_{i-1}}\bigr) + C_2, \quad \forall i \in \mathbb{N}, 
    \end{align*}
    where the constants $C_1 \in (0,1)$ and $C_2 \geq 0$ depend solely on the coefficients. 
\end{proposition}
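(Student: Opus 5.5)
The plan is to compute $\E[|\widetilde{V}_{t_i}|\,|\,\widetilde{X}_{t_{i-1}}]$ directly from the explicit one-step representations \eqref{eq:explicitV_BM} and \eqref{eq:explicitV_OU}. These hold for all three compositions (LT1, LT2, Strang), because in every scheme the $V$-update has the form $\widetilde{V}_{t_i}=e^{a(U_i)\Delta}\widetilde{V}_{t_{i-1}}+(\text{drift term})+\chi_{i-1}(U_i)$, with $U_i$ being $\mathcal{F}_{t_{i-1}}$-measurable. The subsequent $\psi_{\textrm{D}}$-step in LT2 and Strang changes only $U$, so it does not affect $L$. The first ingredient is Proposition~\ref{rem:stateSpacePreservation}, which gives $U_i\in[0,1]^d$ whenever $\widetilde{U}_{t_{i-1}}\in[0,1]^d$. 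Since $a$ is continuous and strictly negative on the compact set $[0,1]^d$, we have $a(u)\le -\underline a<0$ there. Hence $e^{a(U_i)\Delta}\le e^{-\underline a\Delta}=:C_1\in(0,1)$, and this is the contraction constant.

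Next I bound the remaining terms uniformly in the state. Using $|e^{a\Delta}-1|/|a|\le\Delta$, the deterministic term satisfies $|b(U_i)(e^{a(U_i)\Delta}-1)/a(U_i)|\le \Delta\sup_{[0,1]^d}|b|$. For the noise, conditionally on $\widetilde{X}_{t_{i-1}}$ the integral $\chi^{\textrm{BM}}_{i-1}(U_i)$ is centred Gaussian with variance \eqref{eq:nu_variance_BM}, which is at most $\Delta\sup_{[0,1]^d}\Sigma^2$. Jensen's inequality then gives $\E[|\chi_{i-1}|\,|\,\widetilde{X}_{t_{i-1}}]\le \sqrt{\Delta}\sup|\Sigma|$. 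The triangle inequality yields
\begin{equation*}
\E\big[|\widetilde{V}_{t_i}|\,\big|\,\widetilde{X}_{t_{i-1}}\big]\le C_1|\widetilde{V}_{t_{i-1}}|+K,
\end{equation*}
and therefore $\E[L(\widetilde{X}_{t_i})\,|\,\widetilde{X}_{t_{i-1}}]\le C_1L(\widetilde{X}_{t_{i-1}})+(1-C_1+K)$. Because $\Delta<1$ is fixed, $C_2:=1-C_1+K$ depends only on the coefficients (and on $\Delta$), which settles the \textbf{[BM]} case.

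In the \textbf{[OU]} case the only new contribution is $\theta(\mu-Z_i)(e^{a(U_i)\Delta}-1)/a(U_i)$, whose absolute value is at most $\theta\Delta(|\mu|+|Z_i|)$. The noise $\chi^{\textrm{OU}}$ is handled exactly as above, with $\sigma$ in place of $\Sigma$. The main obstacle is therefore controlling $|Z_i|$ without enlarging $L$, since $L$ does not see $z$. I would first exploit that $Z$ is sampled exactly, since $\psi_{\textrm{Z}}$ is the exact OU flow, and is autonomous. By Lemma~\ref{lemma:boundedMomentsZ}, its moments are bounded uniformly in time by a constant independent of $T$ (given the moment assumption on $Z_0$). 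The goal is to bound $\E[\theta\Delta|Z_i|]$, after taking the expectation over the $Z$-marginal, by a constant absorbed into $C_2$.

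If a genuinely pointwise conditional bound is needed, the fallback is to expand $Z_i=e^{-\theta\Delta}z+\mu(1-e^{-\theta\Delta})+\chi^{\textrm{Z}}$, or its two half-steps for Strang. This bounds the extra term by $\theta\Delta e^{-\theta\Delta}|z|$ plus a constant. One then has to argue, as the paper's conditioning presumably intends, that $Z$ is treated as an exogenous stationary-type input whose contribution is uniformly bounded by Lemma~\ref{lemma:boundedMomentsZ}. This step is where I expect the argument to need the most care.
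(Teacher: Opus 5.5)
Your \textbf{[BM]} argument is the paper's. It uses the contraction $e^{a(U_i)\Delta}\le e^{-\underline a\Delta}$ from $\max_{[0,1]^d}a<0$, the bound $|(e^{a\Delta}-1)/a|\le\Delta$, and conditionally centred Gaussian noise of variance at most $\Delta\sup|\Sigma|^2$. The only difference is that you use Jensen where the paper uses the half-normal mean $\sqrt{2/\pi}$; as in the paper, your constants also depend on $\Delta$. For \textbf{[OU]} you again propose what the paper does: it invokes $\E[|Z_{t_{i-1}}|]\le C$ from Lemma~\ref{lemma:boundedMomentsZ} and declares the rest identical. Your unease about this step is justified. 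It is a genuine gap, shared by your proposal and by the paper.

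In the \textbf{[OU]} case the Markov chain is $\widetilde X=(\widetilde V,\widetilde U,Z)$, so $z=Z_{t_{i-1}}$ is part of the conditioning. Replacing $|Z_i|$ by its marginal moment only proves the unconditional recursion $\E[L(\widetilde X_{t_i})]\le C_1\E[L(\widetilde X_{t_{i-1}})]+C_2'$. Applying Lemma~\ref{lemma:boundedMomentsZ} conditionally, with initial value $z$, returns a bound that grows in $|z|$. No ``exogenous input'' argument can repair this, because the stated inequality with $L=1+|v|$ is false. For LT2 with $v=0$,
\[
\E\bigl[L(\widetilde X_{t_i})\bigm|\widetilde X_{t_{i-1}}\bigr]\ge\bigl|b(u)+\theta(\mu-z)\bigr|\,\frac{1-e^{a(u)\Delta}}{|a(u)|}\longrightarrow\infty\quad\text{as }|z|\to\infty,
\]
while the right-hand side stays equal to $C_1+C_2$. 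Moreover, sublevel sets of $1+|v|$ are not compact in $\R\times[0,1]^d\times\R$, which the framework of \cite{Mattingly2002} requires.

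The missing idea is to put $z$ into the Lyapunov function, $L(x)=1+|v|+\kappa|z|$, and use the exact OU contraction $\E[|Z_{t_i}|\mid z]\le e^{-\theta\Delta}|z|+|\mu|+\sigma\sqrt{\Delta}$. This holds for all three compositions, since two exact half-steps form an exact full step. Your own estimates then give the following $|z|$-coefficients in the drift: $\theta\Delta e^{-\theta\Delta}+\kappa e^{-\theta\Delta}$ for LT1, $\theta\Delta+\kappa e^{-\theta\Delta}$ for LT2, and $\theta\Delta e^{-\theta\Delta/2}+\kappa e^{-\theta\Delta}$ for Strang. Each is at most $\kappa(1+e^{-\theta\Delta})/2$ when $\kappa=2\theta\Delta/(1-e^{-\theta\Delta})$. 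Hence the drift condition holds pointwise with $C_1=\max\{e^{-\underline a\Delta},(1+e^{-\theta\Delta})/2\}\in(0,1)$.
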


\begin{proof}   
    We first treat the  \textbf{[BM]}-case and then the \textbf{[OU]}-case.
    \vspace{-0.3cm}
    \paragraph*{The \textbf{[BM]-case.}}    
     We start with System \eqref{eq:HH_redefined}\textbf{[BM]} and consider the second Lie-Trotter splitting method \eqref{eq:BM_HH_LT2}, whose $V$-component can be expressed as
    \begin{equation*}
        \widetilde{V}_{t_i} = e^{a\left(\widetilde{U}_{t_{i-1}}\right)\Delta } \widetilde{V}_{t_{i-1}} + \frac{b\bigl(\widetilde{U}_{t_{i-1}}\bigr)}{a\bigl(\widetilde{U}_{t_{i-1}}\bigr)} \left( e^{a\left(\widetilde{U}_{t_{i-1}}\right)\Delta} -1 \right) + \nu_{\textrm{BM}}\bigl(\Delta;\widetilde{U}_{t_{i-1}} \bigr)\xi_{i-1},
    \end{equation*}
    where $\xi_{i-1}\sim \mathcal{N}(0,1)$, by considering the law of the stochastic term and \eqref{eq:nu_variance_BM}.
    Using that $a$ is strictly negative, that $a,b,\Sigma \in C^1$, that the component $\widetilde{U}$ is bounded (cf. Proposition \ref{rem:stateSpacePreservation}) and 
    that $0<(e^x-1)/x<1$ for all $x<0$, we obtain
    \begin{equation*}
        L\bigl( \widetilde{X}_{t_i} \bigr) = 1 + \left|  \widetilde{V}_{t_{i}} \right| \leq 1+ e^{-C_a \Delta}  \left| \widetilde{V}_{t_{i-1}} \right| + C_b \Delta + C_\Sigma \sqrt{\Delta} \left| \xi_{i-1} \right|,
    \end{equation*}
    where $C_a,C_b,C_\Sigma>0$ depend on the coefficients $a$, $b$ and $\Sigma$, respectively.
    Since $\xi_{i-1}$ is standard normal, the distribution of $|\xi_{i-1}|$ is half-normal with expectation equal to $\sqrt{2/\pi}$. Thus,
    \begin{equation*}
         \mathbb{E}\left[L\bigl(\widetilde{X}_{t_i}\bigr) \bigm| \widetilde{X}_{t_{i-1}}\right] \leq \underbrace{e^{-C_a \Delta}}_{=C_1 \in (0,1)} L\bigl( \widetilde{X}_{t_{i-1}} \bigr)   + \underbrace{1+ C_b\Delta + C_\Sigma\sqrt{\Delta}\sqrt{\frac{2}{\pi}}}_{=C_2\geq 0},
    \end{equation*}
   as required. The result follows analogously for the first Lie-Trotter \eqref{eq:BM_HH_LT1} and Strang \eqref{eq:BM_HH_S} schemes. 

\paragraph*{The \textbf{[OU]}-case.}

Now, consider System \eqref{eq:HH_redefined}\textbf{[OU]} and focus on the second Lie-Trotter splitting method \eqref{eq:OU_HH_LT2}, 
whose $V$-component can be expressed as
\begin{equation*}
        \widetilde{V}_{t_i} = e^{a\left(\widetilde{U}_{t_{i-1}}\right)\Delta } \widetilde{V}_{t_{i-1}} + \frac{ \left(b\bigl(\widetilde{U}_{t_{i-1}}\bigr) 
+\theta \bigl(\mu-Z_{t_{i-1}}\bigr) \right)}{a\bigl(\widetilde{U}_{t_{i-1}}\bigr)} \left( e^{a\left(\widetilde{U}_{t_{i-1}}\right)\Delta} -1 \right) + \nu_{\textrm{OU}}\bigl(\Delta;\widetilde{U}_{t_{i-1}} \bigr)\xi_{i-1},
    \end{equation*}
where $\xi_{i-1} \sim \mathcal{N}(0,1)$, by considering the law of the stochastic term and \eqref{eq:nu_variance_OU}. Noting that $\mathbb{E}\left[ |Z_{t_{i-1}}| \right]\leq C(\theta,\mu,\sigma,\mathbb{E}[|Z_0|^2])$ (see Lemma \ref{lemma:boundedMomentsZ} in Appendix \ref{appA}) and, similarly to the previous case, that $\bigl| \nu_{\textrm{OU}}\bigl(\Delta;\widetilde{U}_{t_{i-1}} \bigr) \bigr|\leq \sigma\sqrt{\Delta}$, the result can be shown in the same way as above. Moreover, it follows again analogously for the first Lie-Trotter \eqref{eq:OU_HH_LT1}  
and Strang \eqref{eq:OU_HH_S} 
schemes.
\end{proof}

\begin{remark}
    A notable feature of the proposed splitting schemes, compared to Euler-Maruyama type methods, is that a discrete Lyapunov condition (Proposition \ref{lemma:lyapunov_BM_HH}) can be established directly for the same Lyapunov function as considered for the true process (see \cite{Hoepfner2016}), without any restrictions on the time-step size~$\Delta$. 
\end{remark}

\paragraph*{Smoothness condition.} 

The smoothness condition can be proved in a similar way as in Corollary~7.4 of \cite{Mattingly2002} and Section 6 of \cite{Ableidinger2017}. In particular, one has to verify that there exists a finite number $k$ of time steps such that the transition probability of the respective numerical solution $(\widetilde{X}_{t_i})_{i=0,\ldots,n}$, given by
\begin{equation}\label{eq:transProb}
    \widetilde{P}_{t_i}(x,\mathcal{A}):=\mathbb{P}( \widetilde{X}_{t_i} \in \mathcal{A} | \widetilde{X}_0=x ), \quad  x \in \mathcal{S}, \ \mathcal{A} \in \mathcal{B}(\mathcal{S}),
\end{equation}
where $\mathcal{B}(\mathcal{S})$ denotes the Borel sigma algebra on the state space $\mathcal{S}=\mathbb{R}\times[0,1]^d$, admits a smooth density for $i \geq k$. This property is denoted by $k$-\textit{step hypoellipticity} in \cite{Buckwar2022}.

For all constructed splitting methods, a stochastic term enters the $V$-coordinate at every iteration step (cf. formulas \eqref{eq:sub1_BM_flow} and \eqref{eq:sub1_OU_flow}, respectively). Moreover, the $V$-coordinate enters into all components of the function $\psi_\textrm{D}$, defining the corresponding  $U$-coordinate (cf. formulas \eqref{eq:sub2_BM_flow} and \eqref{eq:psi_gamma}). Thus, it follows from the expressions \eqref{eq:BM_HH_LT1} and \eqref{eq:OU_HH_LT1} of the first Lie-Trotter splitting methods for System~\eqref{eq:HH_redefined}\textbf{[BM]} and \eqref{eq:HH_redefined}\textbf{[OU]}, respectively, that $\widetilde{P}_{t_i}(x,\mathcal{A})$ \eqref{eq:transProb} has a smooth density for $i\geq k=2$, i.e. the first Lie-Trotter schemes are $2$-step hypoelliptic. Moreover, from \eqref{eq:BM_HH_LT2}, \eqref{eq:BM_HH_S} and \eqref{eq:OU_HH_LT2}, \eqref{eq:OU_HH_S}, respectively, it is evident that for the second Lie-Trotter and Strang schemes, we have that $\widetilde{P}_{t_i}(x,\mathcal{A})$ \eqref{eq:transProb} has a smooth density for $i\geq k=1$, i.e. these schemes are even $1$-step hypoelliptic.

\begin{remark}
    The property of $1$-step hypoellipticity of the second Lie-Trotter and Strang schemes may be beneficial in statistical applications, e.g. when using the numerical method to approximate the ($1$-step) transition densities of the process, which are used to derive a likelihood function. Note that the Euler-Maruyama scheme, when applied to System \eqref{eq:HH_redefined}, is only $2$-step hypoelliptic.
\end{remark}

\paragraph*{Irreducibility condition.}  

The irreducibility condition can be also treated as in Corollary~7.4 of \cite{Mattingly2002} and Section 6 of \cite{Ableidinger2017}. In particular, one has to verify that any subset of the state space $\mathcal{A}\in \mathcal{B}(\mathcal{A})$ can be reached from any starting point $x \in \mathcal{S}$ in a finite number of steps, i.e. there exists a finite number $k$ such that $\widetilde{P}_{t_i}(x,\mathcal{A})>0$ for $i\geq k$. 

Note that the (stochastic) function $\psi_{\textrm{BM}}$ in \eqref{eq:sub1_BM_flow} (resp. $\psi_{\textrm{OU}}$ in  \eqref{eq:sub1_OU_flow}), defining the $V$-coordinate of the splitting methods, follows a normal distribution, which may ``send'' it to any point in $\mathbb{R}$ with positive probability at every iteration step. 

Thus, it follows again from the expressions \eqref{eq:BM_HH_LT1} and \eqref{eq:OU_HH_LT1} of the first Lie-Trotter splitting methods that $\widetilde{P}_{t_i}(x,\mathcal{A})>0$ for $i \geq k=2$. Moreover, for the other splitting schemes \eqref{eq:BM_HH_LT2}, \eqref{eq:BM_HH_S} and \eqref{eq:OU_HH_LT2}, \eqref{eq:OU_HH_S}, respectively, we obtain $\widetilde{P}_{t_i}(x,\mathcal{A})>0$ for $i\geq k=1$.

 \section{Numerical experiments: Illustration on the stochastic Hodgkin-Huxley model}\label{sec:num_exp}

In this section, we investigate the stochastic Hodgkin-Huxley model (cf. Section \ref{sec:HHmodel}) and compare the performance of the proposed Lie-Trotter (\texttt{LT}) and Strang (\texttt{S}) splitting methods with that of different explicit Euler-Maruyama type methods. Specifically, we include the standard Euler-Maruyama (\texttt{EM}) method \citep{Kloeden1992}, a tamed variant (\texttt{TEM}) \citep{Hutzenthaler2012}, a diffusion tamed method (\texttt{DTEM}) \citep{Tretyakov2013}, as well as a truncated \texttt{EM} (\texttt{TrEM}) scheme \citep{Hutzenthaler2012_2} and diffusion truncated (\texttt{DTrEM}) method \citep{Hutzenthaler2012_2}. A concise overview of these schemes can be found in Section~7.1 of \cite{Buckwar2022}.

Section \ref{sec:NumExp_rates} presents the empirically observed mean-square convergence rates of all considered numerical methods, while Section \ref{sec:NumExp_dynamics} investigates their ability to preserve the qualitative dynamics of the model as the step size $\Delta$ increases and studies their robustness with respect to different choices of the initial value~$V_0=v_0 \in \mathbb{R}$.

Throughout this section, we focus on System \eqref{eq:HH_redefined}\textbf{[BM]} with $\Sigma(u)\equiv \sigma>0$, and remark that similar observations are obtained for System \eqref{eq:HH_redefined}\textbf{[BM]} with $\Sigma(u)=\sigma \norm{u}^2$ as well as System~\eqref{eq:HH_redefined}\textbf{[OU]} (figures not shown here). All reference paths are generated using the \texttt{TrEM} scheme with a very small step size $\Delta$. We verified that using a different numerical scheme as reference method does not affect the results.

\subsection{Mean-square convergence rates}
\label{sec:NumExp_rates}

To assess the mean-square convergence order of the numerical schemes, we consider the root mean-squared error (RMSE), defined by 
\begin{equation}
\label{eq:RMSE}
    \textrm{RMSE}(\Delta):= \max\limits_{i=0,\ldots,n} \left( \frac{1}{M} \sum\limits_{m=1}^{M} \left| X_{t_i}(\omega_m) - \widetilde{X}_{t_i;\Delta} (\omega_m) \right|^2 \right)^{1/2},
\end{equation}
where $X_{t_i}(\omega_m)$ denotes the $m$-th simulated reference path at time $t_i$, and $\widetilde{X}_{t_i;\Delta} (\omega_m)$ its numerical approximation, obtained using the time step size $\Delta$.

\begin{figure}
	\begin{centering}
		\includegraphics[width=0.49\textwidth]{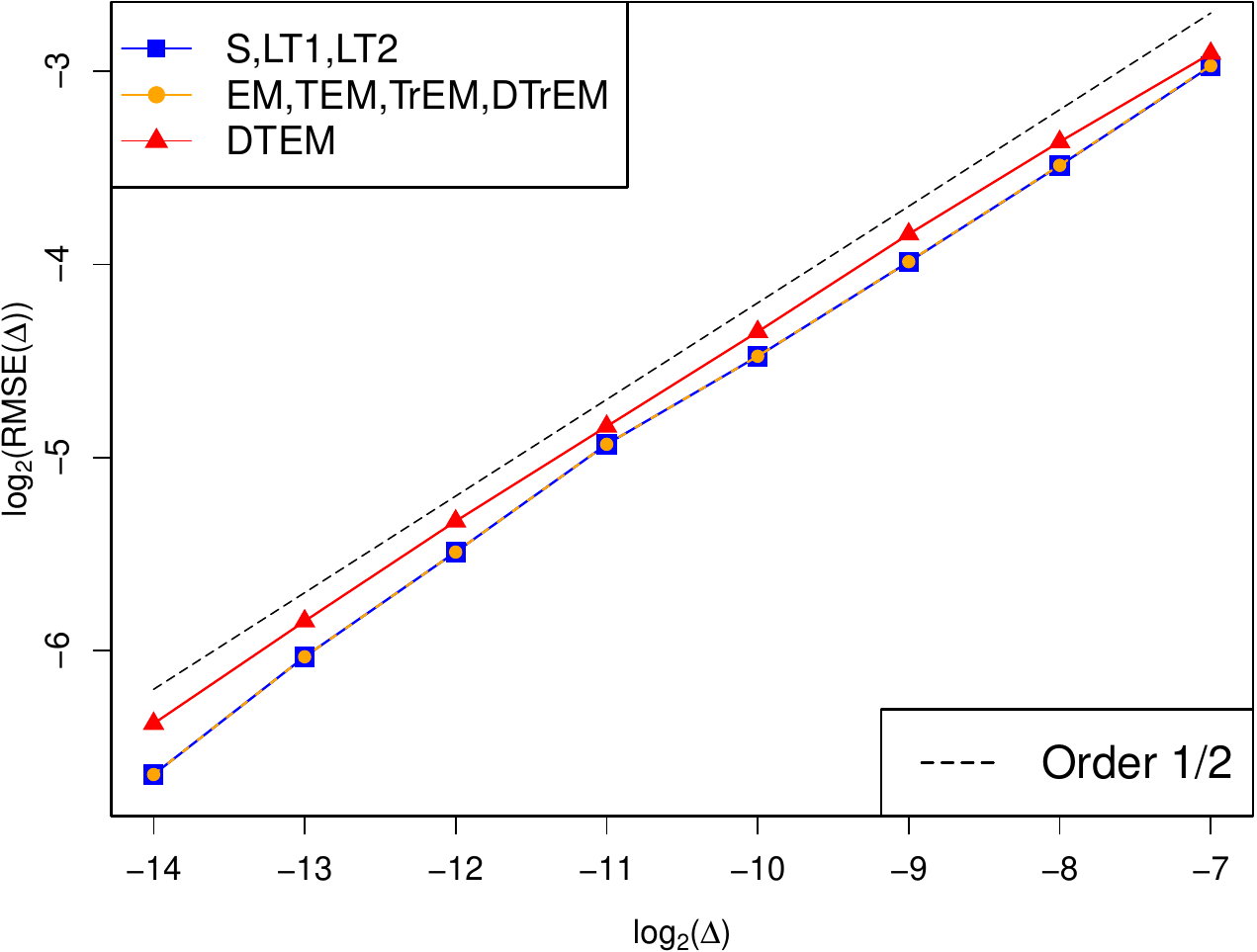}
		\includegraphics[width=0.49\textwidth]{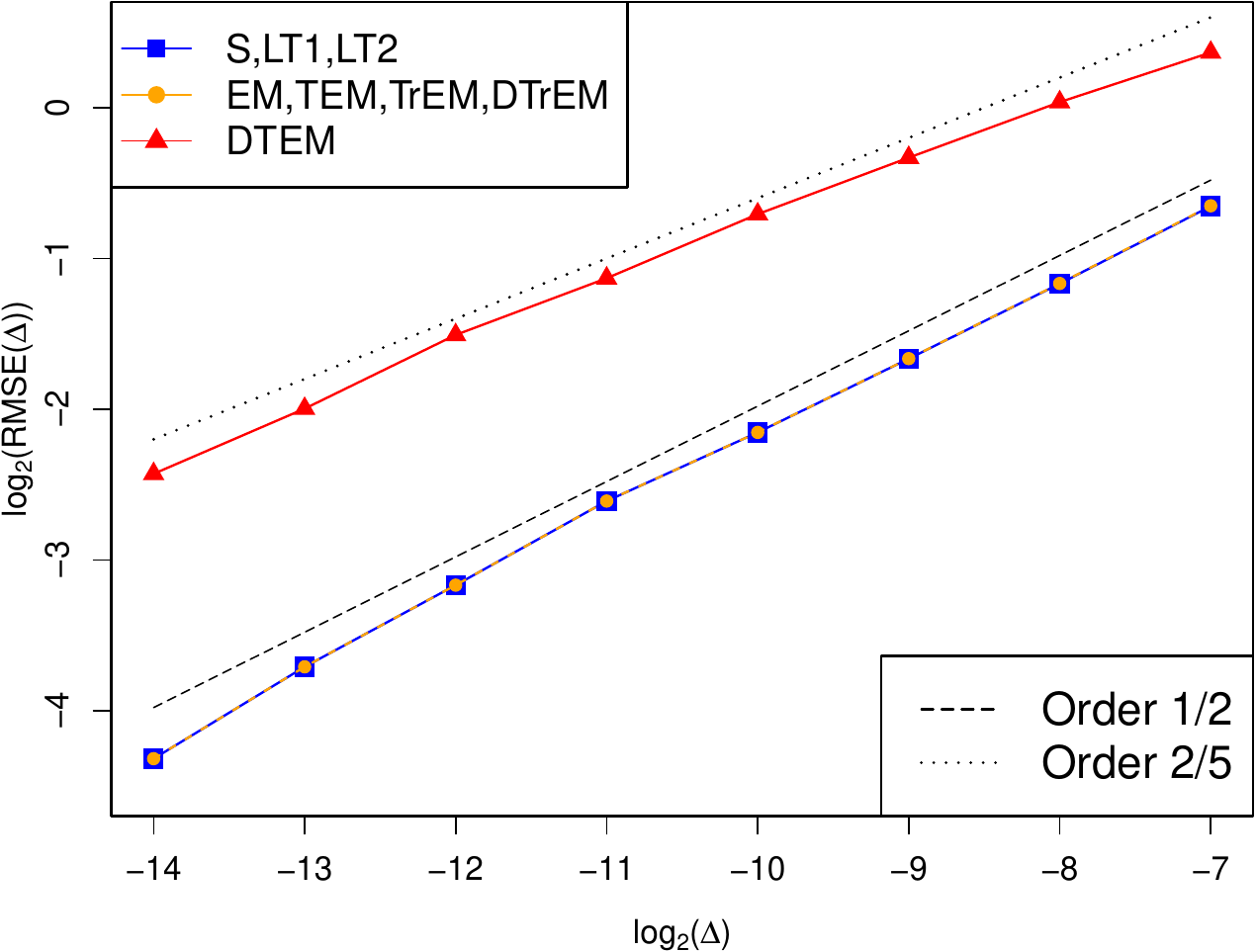}
		\caption{Mean-square convergence rates for the stochastic Hodgkin Huxley model, illustrated via the RMSE \eqref{eq:RMSE}. All parameters are set to $1$, except for $\sigma=5$ in the right panel, $T=1$, $X_0=(0,0,0,0)^T$, and $M=10^3$.  The corresponding reference paths have been obtained with the $\texttt{TrEM}$ scheme using $\Delta=2^{-16}$.}
		\label{fig:convergence_rates}
	\end{centering}
\end{figure}

In Figure \ref{fig:convergence_rates}, we report the RMSEs of the considered numerical methods in log$_2$-scale as functions of $\Delta$. We use $T=1$, $X_0=(0,0,0,0)^T$,  $M=10^3$, and set all model parameters to $1$ (with $\sigma=5$ in the right panel). 

The proposed Lie-Trotter and Strang splitting schemes exhibit a mean-square convergence rate of $1/2$, which agrees with the theoretical local rate $q-1=3/2-1=1/2$ established in Section~\ref{sec:mean_square_convergence_splitting}. Their RMSEs are nearly indistinguishable, and therefore appear as a single line in the figure (blue line). Moreover, their errors closely align with those produced by most Euler-Maruyama type methods, whose RMSEs are likewise nearly identical and plotted together as one line (orange line). 

The only exception is the \textrm{DTEM} method, which yields slightly larger errors (red line). For small noise intensities ($\sigma=1$, left panel) it reaches an empirical rate close to $1/2$. However, for lager noise intensities, its performance degrades. When $\sigma=5$ (right panel), the observed rate drops to approximately $2/5$, and it decreases further for even larger values of $\sigma$ (figures not shown here). This behaviour is consistent with the fact that the $\textrm{DTEM}$ scheme already exhibits a reduced convergence rate relative to other Euler-Maruyama type methods in the one-sided Lipschitz setting (cf. Section~7 of \cite{Buckwar2022}). In contrast, the proposed splitting methods maintain a mean-square convergence rate of $1/2$ regardless of the noise intensity $\sigma$.

Finally, we note that (figures not shown here) a convergence rate of $1/2$ is observed for all splitting methods in both the additive and multiplicative noise cases, whereas our experiments for the deterministic system indicate a rate of $1$ for the Lie-Trotter schemes (for the Strang scheme we even observe an empirical rate between $3/2$ and $2$ in the deterministic case). These observations are also in accordance with the theoretical results, cf. Remark \ref{rem:add_mult_noise_det}.

\subsection{Qualitative model dynamics}
\label{sec:NumExp_dynamics}

In this subsection, we demonstrate that the proposed splitting methods substantially outperform the Euler-Maruyama type schemes in their ability to preserve the qualitative dynamics of the stochastic Hodgkin-Huxley model. Throughout these experiments, we choose parameter values that yield neuronal spiking behaviour: $V_{rest} = -65$, ${g}_K = 36$, ${g}_{Na} = 120$, ${g}_L = 0.3$, $E_K = -77$, $E_{Na} = 55$, $E_L = -61$ and $I = 10$. 

\begin{figure}
	\begin{centering}
		\includegraphics[width=0.95\textwidth]{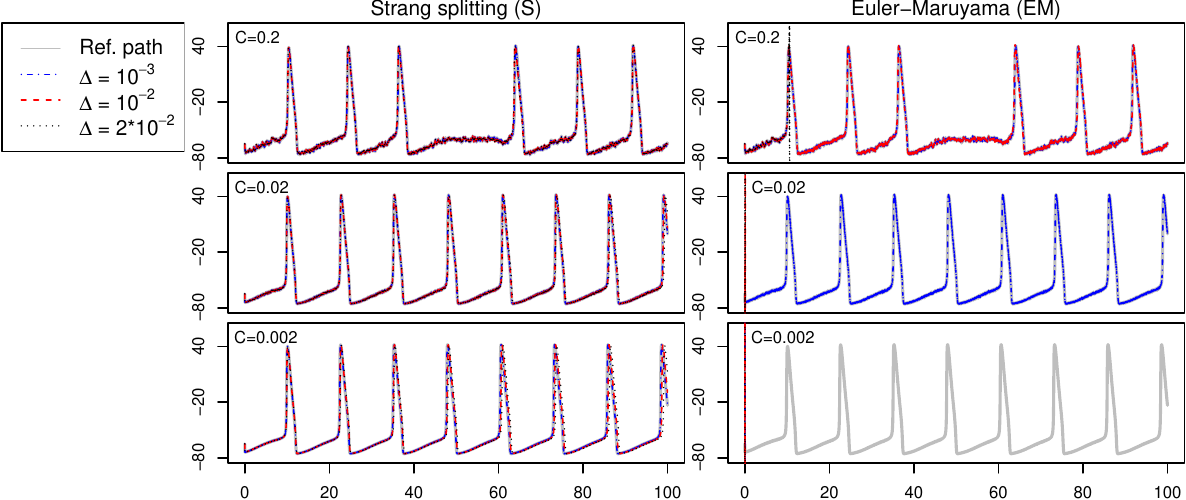}\\
		\vspace{0.2cm}
		\includegraphics[width=0.95\textwidth]{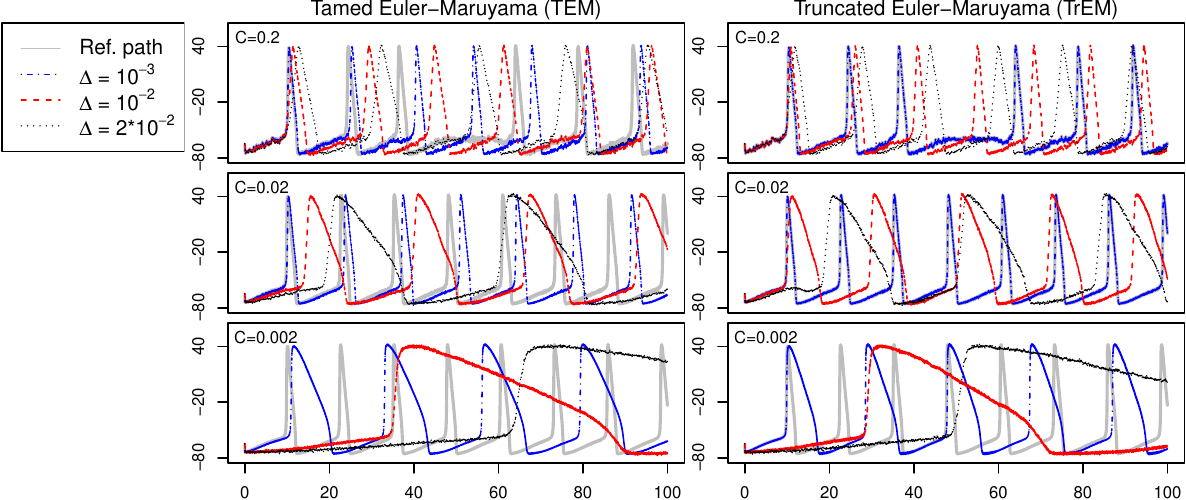}\\
		\vspace{0.2cm}
		\includegraphics[width=0.95\textwidth]{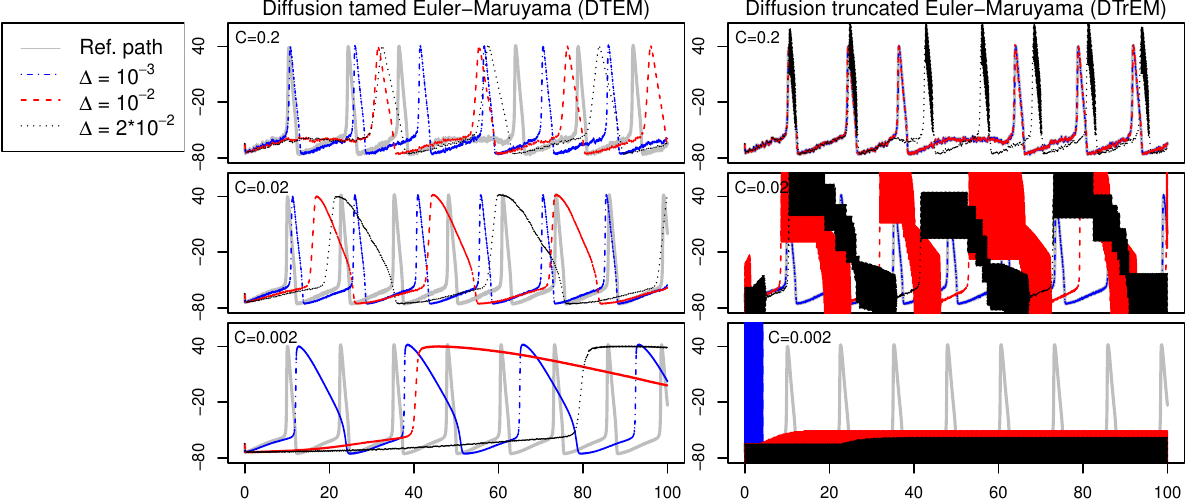}
		\caption{Paths of the $V$-component of the stochastic Hodgkin-Huxley model obtained under different values for the parameter $C$ and the time step $\Delta$. The corresponding reference path has been obtained with the $\texttt{TrEM}$ scheme using $\Delta=10^{-5}$. All paths are obtained under the same random realization.}
		\label{fig:paths}
	\end{centering}
\end{figure}

In Figure \ref{fig:paths}, we report paths of the $V$-component of the model, obtained under different values of the membrane capacitance $C$ (respective top/middle/bottom panels: $C=0.2/0.02/0.002$) and for increasing time steps $\Delta$ (blue dotted-dashed lines: $\Delta=10^{-3}$, red dashed lines: $\Delta=10^{-2}$, black dotted lines: $\Delta=2\cdot 10^{-2}$). Only the splitting methods (figure shown for the Strang scheme, though the  Lie-Trotter schemes behave analogously) produce paths that reliably overlap with the reference path for all values of $C$ and $\Delta$. This demonstrates that the splitting schemes are robust under extreme parameter settings and able to reproduce the correct dynamics of the model even for large values of $\Delta$. Thus, they are both reliable and computationally efficient. 

The standard \texttt{EM} method reproduces the reference path only when $\Delta$ is sufficiently small. However, it becomes unstable and explodes as $\Delta$ increases. For $C=0.002$, it even explodes for all tested step sizes $\Delta$. This instability is related to the fact that the \texttt{EM} method does not preserve the state space of the model, yielding values for the $U$-component outside the unit cube (cf. Remark \ref{remark:euler-maruyama}).

The tamed and truncated schemes ensure that the $U$-components remain inside the unit cube by construction. However, as noted in the literature, taming or truncating modifications on the \texttt{EM} scheme, may lead to the non-preservation of important model dynamics (see, e.g., \cite{Buckwar2022} and \cite{Kelly2022}). This is confirmed by our experiments. In particular, both tamed schemes underestimate the frequency of the neural spikes and fail to reproduce the reference path across all tested values of $C$ and $\Delta$. While their accuracy may improve for sufficiently small $\Delta$, such reductions severely increase computational cost. A slightly better performance is observed for the truncated schemes. The \texttt{TrEM} method accurately matches the reference path for $\Delta=10^{-3}$ when $C=0.2$ or $C=0.02$, but fails when $\Delta$ is increased. The \texttt{DTrEM} method reproduces the desired path in some configurations, e.g. for $C=0.2$ it remains accurate when $\Delta=10^{-2}$. However, for other settings (e.g., $C=0.02/0.002)$, it breaks down and produces spurious oscillations. This is in agreement with the literature, where similar issues have been reported for that scheme (see, e.g., \cite{Tretyakov2013,Buckwar2022,Kelly2022}).

\begin{figure}[t]
	\begin{centering}
		\includegraphics[width=0.95\textwidth]{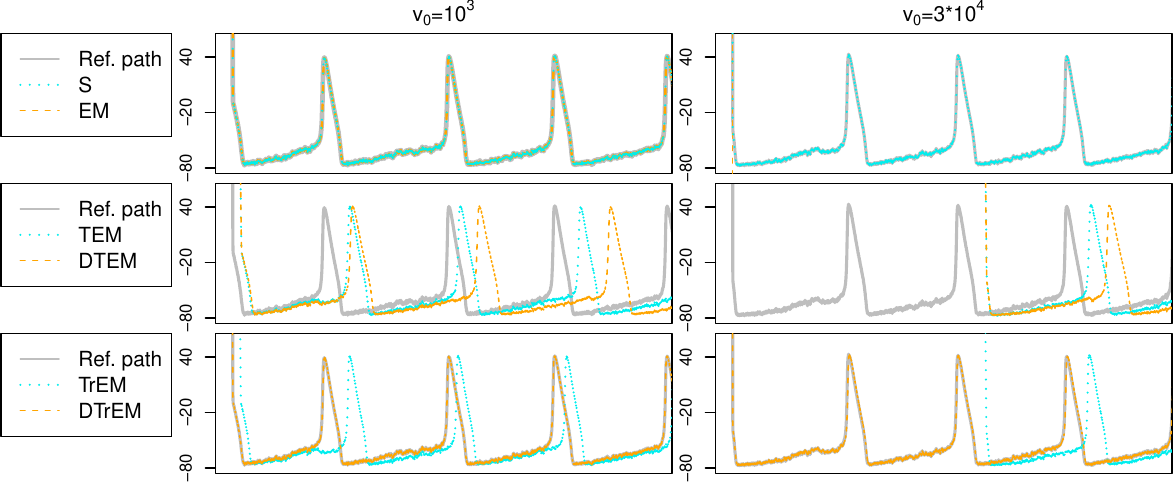}\\
		\caption{Paths of the $V$-component of the stochastic Hodgkin-Huxley model obtained under different values for the initial condition $v_0$, $\Delta=10^{-3}$ and $C=0.2$. The corresponding reference path has been obtained with the $\texttt{TrEM}$ scheme using $\Delta=10^{-6}$. All paths are obtained under the same random realization.}
		\label{fig:v0}
	\end{centering}
\end{figure}

\begin{figure}[!htbp]
	\begin{centering}
		\includegraphics[width=0.95\textwidth]{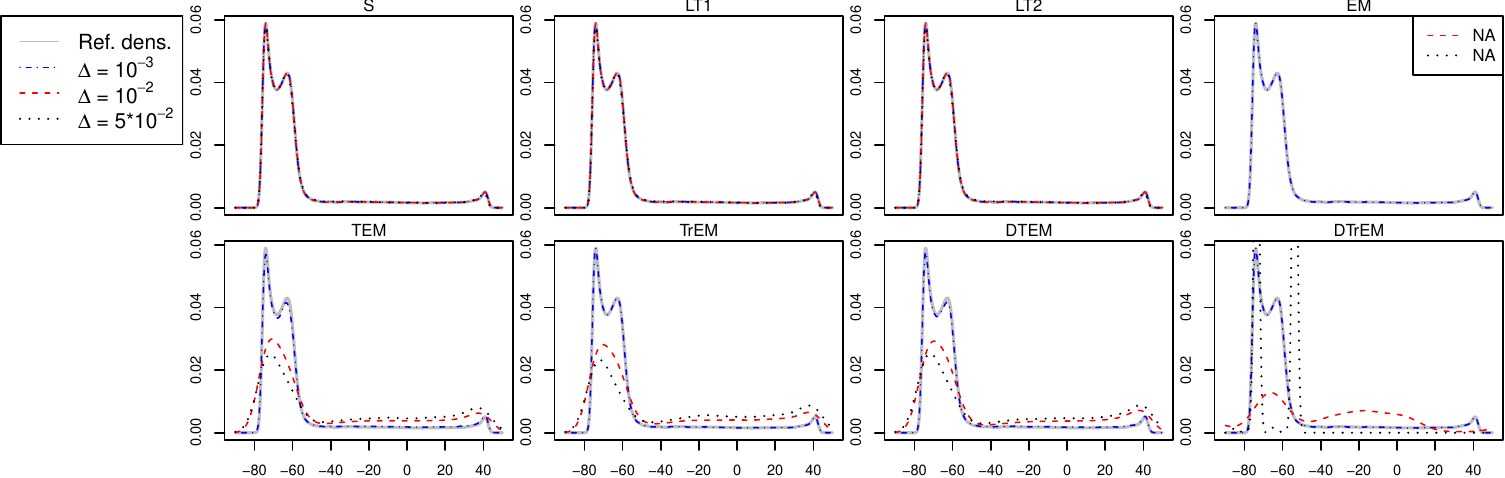}\\
		\caption{Invariant densities of the $V$-component of the stochastic Hodgkin-Huxley model obtained under different values for the time step $\Delta$ and for $C=0.02$. The reference density has been obtained with the $\texttt{TrEM}$ scheme using $\Delta=10^{-5}$.}
		\label{fig:densities}
	\end{centering}
\end{figure}

Figure \ref{fig:v0} examines robustness of the numerical schemes with respect to the initial value. We fix $\Delta=10^{-3}$ and $C=0.02$, and report sample paths of the $V$-component obtained for different values of $v_0$ (left panels: $v_0=10^3$, right panels: $v_0=3\cdot 10^4$). Once again, the splitting methods (figure shown for the Strang scheme, though the  Lie-Trotter schemes behave analogously) reproduce the reference path for both values of $v_0$, being thus also robust under extreme choices of the initial condition. This robustness is important because single paths of the $V$-component may attain arbitrarily large values, yet by ergodicity they should return to the compact invariant set. These observations are thus consistent with the ergodic property established for the splitting schemes. In contrast, all Euler-Maruyama type methods, except \texttt{DTrEM}, fail to reproduce the reference path under large $v_0$. However, as noted above, the \texttt{DTrEM} method exhibits severe issues under other parameter choices.

Finally, in Figure \ref{fig:densities}, we fix $C=0.02$ and compare invariant densities of the $V$-component estimated from long-time simulations obtained under different values of $\Delta$. All splitting methods reproduce the reference invariant density for all tested values of $\Delta$. In contrast, the Euler-Maruyama type methods provide accurate approximations of the invariant density only for $\Delta=10^{-3}$. For larger step sizes, their estimates deviate substantially. Note that, for the standard \texttt{EM} method, invariant densities cannot be computed for $\Delta=10^{-2}$ and $\Delta=2\cdot 10^{-5}$ due to numerical blow-up.

\section{Conclusion}\label{sec:conclusion}

In this work, we introduced a novel localized version of the fundamental theorem of mean-square convergence for stochastic differential equations (SDEs) with locally Lipschitz coefficients. The theorem states that if a numerical scheme for the SDE is locally consistent in the mean-square sense of order $q>1$, then it is locally mean-square convergent with order $q-1$. If, in addition, the $2p$th moments of both the exact and numerical solution are bounded for some $p>1$, then global mean-square convergence follows. We illustrated this general convergence result on a class of Hodgkin-Huxley type SDEs with conditionally linear drift, motivated by stochastic neuronal modeling.

For this class of locally Lipschitz SDEs, we constructed numerical splitting methods based on Lie-Trotter and Strang compositions, exploiting their conditional linear structure. Using the proposed convergence framework, we established mean-square convergence of the splitting schemes. In addition, we proved key structure-preserving properties, including the preservation of the state space and geometric ergodicity. These results provide a rigorous mathematical foundation for the numerical approximation of stochastic variants of Hodgkin-Huxley type systems.

Our numerical experiments support the theoretical findings and demonstrate that the proposed splitting schemes substantially outperform Euler-Maruyama type methods in preserving the qualitative features of neuronal spiking dynamics. In particular, the splitting methods accurately capture oscillation frequencies, amplitudes, and phases, even for comparatively large time steps, resulting in significant computational savings. This efficiency is especially relevant in computational neuroscience, for instance in the simulation of large neuronal networks, or when embedding the numerical methods into simulation-based inference procedures. Similar advantages of splitting methods have been observed for related neuronal models, such as the FitzHugh-Nagumo system; see, e.g.,  \cite{Buckwar2022,Foster2024}. 

Moreover, in our numerical study, we observed that the Lie-Trotter and Strang splittings exhibit similar performance in the considered setting. This contrasts with other contexts where Strang splittings have been observed to be superior to Lie-Trotter methods; see, e.g., \cite{Ableidinger2017,Tubikanec2020,Foster2024}. Given that Strang methods are slightly more computationally demanding due to the required half-step evaluations of subequations, and taking into account the hypoellipticity  of the schemes (see Section~\ref{sec:ergodicity_splitting}), the second Lie–Trotter methods may be the preferred choice.

It is also noteworthy that, due to the conditional linear structure of the system, the convergence and structure-preservation results can be established in a largely analogous manner for different Lie-Trotter and Strang compositions. This stands in contrast to parts of the existing literature, where proofs often depend sensitively on the choice of decomposition order and may be substantially more involved for Strang schemes; see, e.g., \cite{Buckwar2022,Pilipovic2024}. 

The scope of the present work goes beyond the stochastic Hodgkin-Huxley settings considered here. The proposed construction of splitting methods may be adapted to other stochastic variants of System \eqref{eq:HH_redefined}. More generally, the localized fundamental convergence theorem and the associated analytical techniques are applicable to a wide range of locally Lipschitz SDEs and numerical methods, and in particular to splitting schemes for other classes of conditionally linear systems. The continuous-time formulations of the splitting schemes used to establish local consistency and boundedness of moments may also be useful for the analysis of related numerical methods.

We believe that the combination of structure-preserving numerical schemes and a convergence theory tailored to locally Lipschitz coefficients provides a useful contribution to the reliable numerical approximation of complex SDE models arising in a wide range of applications.


\vspace{1.0cm}
\noindent\textbf{Acknowledgements.} 
P.É. and A.M. have been partially supported by the  MATH-AmSud 23-MATH-12 SMILE  project of the MATH-AmSud Program. 


\bibliography{literature}


\newpage
\appendix

\section{Auxiliary results}\label{appA}

\subsection{Proof of Lemma \ref{lem:Uin01} \citep{Hoepfner2012}}

\begin{proof}
Once a path $\omega\in\Omega$ is fixed, the function $t\mapsto V_t(\omega)$ is continuous from time interval $[0,s]$ to~$\R$ for any 
$s<\tau(\omega)$. Besides, from the second line of System \eqref{eq:HH_redefined}, and because we are considering times before explosion time, one has for any $1\leq l\leq d$, 
denoting $u_0^l=U_0^l(\omega)\in[0,1]$,
$$
U^l_t(\omega)=u_0^l+\int_0^t[-(\alpha_l(V_r(\omega))+\beta_l(V_r(\omega)))U^l_r+\alpha_l(V_r(\omega))]dr,\quad 0\leq t\leq s.
$$
In the sequel, to ease readability, we drop the reference to $\omega$. Using variation of constant we have 
$$
\begin{array}{lll}
U^l_t&=&\ds u_0^l e^{-\int_0^t(\alpha_l(V_r)+\beta_l(V_r))dr}
+\int_0^t\alpha_l(V_u)e^{-\int_u^t(\alpha_l(V_r)+\beta_l(V_r))dr}du\\
\\
&=& \ds u_0^l e^{-\int_0^t(\alpha_l(V_r)+\beta_l(V_r))dr}
+\int_0^t\frac{\alpha_l(V_u)}{\alpha_l(V_u)+\beta_l(V_u)}(\alpha_l(V_u)+\beta_l(V_u))e^{-\int_u^t(\alpha_l(V_r)+\beta_l(V_r))dr}du,
\end{array}
$$
from which we deduce, using $0\leq \alpha_l(v)/(\alpha_l(v)+\beta_l(v))\leq 1$, $\forall v\in\R$, that
$$
0\leq U^l_t\leq 
u_0^l e^{-\int_0^t(\alpha_l(V_r)+\beta_l(V_r))dr}
+\int_0^t(\alpha_l(V_u)+\beta_l(V_u))e^{-\int_u^t(\alpha_l(V_r)+\beta_l(V_r))dr}du.
$$
Performing integration for the right-hand side term of the last term of the above inequality we get
$$
0\leq U^l_t\leq 1 + (u_0^l-1)e^{-\int_0^t(\alpha_l(V_r)+\beta_l(V_r))dr}.
$$
Using $u_0^l\in[0,1]$ and the assumptions on $\balpha$ and $\bbeta$ we get the result.
\end{proof}


\subsection{Proof of Lemmas \ref{lem:bounded_moments_true_BM} and \ref{lem:bounded_moments_OU}}

\begin{proof}[Proof of Lemma \ref{lem:bounded_moments_true_BM}]
We set $\Psi^p_k(t)=\E[\max_{0\leq s\leq t\wedge S_k}|V_s|^{2p} ]$.
Let $p\geq 1$ fixed. We denote $\Psi_k=\Psi^p_k$ in order to lighten notations. 
        We have for any $0 \leq t \leq T$ satisfying $t<\tau$,
    \begin{equation*}
        V_t = V_0 +  \int\limits_{0}^{t} a(U_s)V_s ds  + \int\limits_{0}^{t} b(U_s) ds  +  \int\limits_{0}^{t} \Sigma(U_s) dW_s 
    \end{equation*}
    and
    \begin{equation*}
        |V_t|^{2p} \leq C(p) \left\{ \left|V_0\right|^{2p} + \left| \int\limits_{0}^{t} a(U_s)V_s ds \right|^{2p} +\left| \int\limits_{0}^{t} b(U_s) ds \right|^{2p} + \left| \int\limits_{0}^{t} \Sigma(U_s) dW_s \right|^{2p}  \right\}.
    \end{equation*}
    Using Hölder's inequality, Lemma \ref{lem:Uin01} and that $a\in C^1$, we get
    \begin{equation*}
        \left| \int\limits_{0}^{t} a(U_s)V_s ds \right|^{2p} \leq \left(  \int\limits_{0}^{t} \underbrace{\left| a(U_s) \right|}_{\leq C} \left| V_s \right| ds  \right)^{2p} \leq C^{2p} t^{2p-1} \int\limits_{0}^{t} \left| V_s \right|^{2p} ds \leq C  \int\limits_{0}^{t} \left| V_s \right|^{2p} ds.
    \end{equation*}
    Moreover, since $b\in C^1$, we have that
    \begin{equation*}
       \left| \int\limits_{0}^{t} b(U_s) ds \right|^{2p} \leq \left(  \int\limits_{0}^{t} \underbrace{\left| b(U_s) \right|}_{\leq C } ds  \right)^{2p} \leq Ct^{2p} \leq C. 
    \end{equation*}
    Using the Burkholder-Davis-Gundy inequality, Hölder's inequality, Lemma \ref{lem:Uin01} and that $\Sigma \in C^1$, we conclude that for any stopping time $S<\tau$ (and with $S\leq T$):
    \begin{align*}
        \mathbb{E}\left[  \max_{0\leq r\leq S}\left| \int\limits_{0}^{r} \Sigma(U_s) dW_s \right|^{2p}   \right] 
        &\leq C(p) \mathbb{E}\left[  \left(  \int\limits_{0}^{S} \underbrace{\left|  \Sigma(U_s)  \right|^2}_{\leq C} ds   \right)^p \right] 
        \leq C\E[ S^{p}]. 
    \end{align*}
    We now use the above inequalities at (stopping) time  $t\wedge S_k$ for $0\leq t\leq T$. Note that~$t\wedge S_k <~\tau$ and $t\wedge S_k\leq T$. We have
\begin{align*}
\Psi_k(t)&\leq  \ds K
\left( \E[|V_0|^{2p}] +1  + \E \left[ \int_0^{t\wedge S_k}|V_s|^{2p}ds \right] \right)\\
&\leq \ds K \left(\E[|V_0|^{2p}] + 1  + \E \left[ \int_0^{t}\max_{0\leq r\leq s\wedge S_k}|V_r|^{2p}ds \right] \right)\\
&\leq \ds K\left( \E[|V_0|^{2p}]+1  + \int_0^{t}\E\left[\max_{0\leq r\leq s\wedge S_k}|V_r|^{2p}\right]ds  \right),
\end{align*}
where the constant $K$ depends on $p,T, \mathbf{b}$ and $\bsigma$.

Note that at the third inequality we have used Fubini's theorem. Besides, we know that 
\begin{equation*}
    \Psi_k(s)=\E\big[\max_{0\leq r\leq s\wedge S_k}|V_r|^{2p}\big]
\end{equation*}
takes a finite value (as it is bounded by $k^{2p}$). Thus, we have 
\begin{equation*}
    \Psi_k(t)\leq K\left( \E[|V_0|^{2p}] + 1+\int_0^t\Psi_k(s)ds\right), \quad 0\leq t\leq T,
\end{equation*}
and applying Grönwall's inequality, we get that
    \begin{equation*}
          \Psi_k(t) \leq  K\left( \mathbb{E}\left[ |V_0|^{2p} \right] + 1 \right)  e^{KT} \leq
        C \left(  1+ \mathbb{E}\left[ \left| V_0 \right|^{2p} \right] \right),
    \end{equation*}
where the constant $C$ depends on $p,T, \mathbf{b}$ and $\bsigma$.
\end{proof}

\begin{proof}[Proof of Lemma \ref{lem:bounded_moments_OU}]
    From the first and third lines of System \eqref{eq:HH_redefined}$\textbf{[OU]}$ we have
    \begin{equation*}
    \begin{aligned}
    |V_t|^{2p} \leq C(p) \Biggl\{ &\, |V_0|^{2p}
    + \left| \int_{0}^{t} a(U_s)V_s \, ds \right|^{2p}
    + \left| \int_{0}^{t} b(U_s) \, ds \right|^{2p} 
    + \left| \int_0^t \theta \mu \, ds \right|^{2p} \\
    & \quad + \left| \int_0^t \theta Z_s \, ds \right|^{2p}
    + \sigma^{2p} |W_t|^{2p} \Biggr\}
    \end{aligned}
    \end{equation*}
    and
    \begin{equation*}
        |Z_t|^{2p} \leq C(p) \left\{ \left|Z_0\right|^{2p} 
        +\left|\int_0^t\theta\mu ds \right|^{2p}
        +\left|\int_0^t\theta Z_sds \right|^{2p}
        +\sigma^{2p}\left|W_t\right|^{2p}  \right\}.
    \end{equation*}
    Thus, remembering that we have set $S_k=\inf\{ t\geq 0:\,|X_t|>k \}$ one gets as in the proof of Lemma~\ref{lem:bounded_moments_true_BM}, using in particular~\eqref{eq:Uin01OU}, for any 
    $0\leq t\leq T$,
    $$
    \E\left[\max_{0\leq t\wedge S_k}|V_s|^{2p} \right]
    \leq 
     K\left\{ \E\left[|V_0|^{2p}\right]+1  + \int_0^{t}\E\left[\max_{0\leq r\leq s\wedge S_k}|V_r|^{2p}\right]ds 
     +\int_0^{t}\E\left[\max_{0\leq r\leq s\wedge S_k}|Z_r|^{2p}\right]ds 
     \right\}
    $$
and
$$
    \E\left[\max_{0\leq t\wedge S_k}|Z_s|^{2p} \right]
    \leq 
     K_Z\left\{ \E\left[|Z_0|^{2p}\right]+1 
     +\int_0^{t}\E\left[\max_{0\leq r\leq s\wedge S_k}|Z_r|^{2p}\right]ds 
     \right\},
    $$
     where the constants $K$ and $K_Z$ depend on $p,T,\mathbf{b}_{OU},\bsigma_{OU}$. Thus, setting
     $$\Psi_k(t)=\E\left[ \max_{0\leq s\leq t\wedge S_k}|V_s|^{2p}
     + \max_{0\leq t\wedge S_k}|Z_s|^{2p}\right]$$
     one gets
     \begin{equation*}
    \Psi_k(t)\leq (K\vee K_Z)\left( \E[|V_0|^{2p}] + \E[|Z_0|^{2p}] + 2+\int_0^t\Psi_k(s)ds\right), \quad 0\leq t\leq T,
\end{equation*}
     and again by Grönwall's lemma,
     \begin{equation}
     \label{eq:cont-Psi-OU}
     \Psi_k(t)\leq C(1+\E[|V_0|^{2p}] + \E[|Z_0|^{2p}]),
     \end{equation}
     where the constant $C$ depends on $p,T,\mathbf{b}_{OU},\bsigma_{OU}$.
\end{proof}

\subsection{Preliminary results for the process $Z$ in the \textbf{[OU]}-case.}\label{sec:preliminaryResults}

The following two lemmas are used in the proofs of Section \ref{sec:Splitting_properties} when considering System \eqref{eq:HH_redefined}\textbf{[OU]}.

\begin{lemma}\label{lemma:boundedMomentsZ}
    Let the process $Z$ be defined via \eqref{eq:OU}. Then, 
    $\forall p \geq 1$, it holds that 
    \begin{equation*}
        \mathbb{E}\left[ \left| Z_t \right|^{2p} \right] \leq C(1+\mathbb{E}[|Z_0|^{2p}]), \quad t>0,
    \end{equation*}
    where the constant $C>0$ depends on $p,\theta,\mu,\sigma$ (but is independent of $T$). As a consequence, 
    \begin{equation*}
        \mathbb{E}\left[ \left| Z_t \right| \right] \leq C(1+\mathbb{E}[|Z_0|^2])^{1/2}, \quad t>0,
    \end{equation*}
    where the constant $C>0$ depends on $\theta,\mu,\sigma$ (but is independent of $T$).
\end{lemma}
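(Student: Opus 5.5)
The plan is to use the explicit solution of the Ornstein--Uhlenbeck equation \eqref{eq:OU}. By variation of constants,
\begin{equation*}
Z_t = e^{-\theta t}Z_0 + \mu\left(1-e^{-\theta t}\right) + \sigma\int_0^t e^{-\theta(t-s)}\,dW_s, \qquad t\geq 0,
\end{equation*}
which is the same formula as $\psi_\textrm{Z}$ in \eqref{eq:sub2_OU_flow} with time origin $0$. Applying the elementary inequality $(x+y+z)^{2p}\leq 3^{2p-1}(x^{2p}+y^{2p}+z^{2p})$, we bound the three contributions separately, and each bound must be uniform in $t$.

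For the first term, $e^{-2p\theta t}\le 1$ because $\theta>0$, so $\E[|e^{-\theta t}Z_0|^{2p}]\le \E[|Z_0|^{2p}]$. For the second term, $|\mu(1-e^{-\theta t})|^{2p}\le |\mu|^{2p}$.

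The only point needing care is the stochastic integral, where a BDG or Gr\"onwall-type argument would produce constants growing in $T$; this is exactly the $T$-dependence of \eqref{eq:Z-pmoments-crude}, which we want to avoid. Instead we use that the integrand is deterministic. The random variable $I_t:=\sigma\int_0^t e^{-\theta(t-s)}dW_s$ is centered Gaussian with variance
\begin{equation*}
\nu_\textrm{Z}^2(t)=\frac{\sigma^2}{2\theta}\left(1-e^{-2\theta t}\right)\le \frac{\sigma^2}{2\theta}.
\end{equation*}
Its $2p$-th moment is therefore
\begin{equation*}
\E[|I_t|^{2p}]=(2p-1)!!\,\nu_\textrm{Z}^{2p}(t)\le (2p-1)!!\left(\frac{\sigma^2}{2\theta}\right)^{p},
\end{equation*}
which is independent of $t$.

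Collecting the three bounds gives $\E[|Z_t|^{2p}]\le C(1+\E[|Z_0|^{2p}])$ with $C=3^{2p-1}\max\{1,|\mu|^{2p}+(2p-1)!!(\sigma^2/2\theta)^p\}$, which depends only on $p,\theta,\mu,\sigma$.

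The second claim follows from Jensen's (or Cauchy--Schwarz) inequality: $\E[|Z_t|]\le(\E[|Z_t|^2])^{1/2}$. Applying the first bound with $p=1$ gives $\E[|Z_t|]\le C^{1/2}(1+\E[|Z_0|^2])^{1/2}$.
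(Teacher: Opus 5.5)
Your argument is correct and is essentially the paper's proof. It uses the same variation-of-constants formula, the same three-term split with bounds uniform in $t$, and Jensen for the first-moment claim.

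The only difference is in the stochastic integral. You compute its $2p$-th moment exactly as a Gaussian moment, whereas the paper applies Burkholder--Davis--Gundy together with $\int_0^t e^{-2\theta(t-s)}\,ds\le 1/(2\theta)$. Contrary to your remark, that route also yields a $T$-independent constant.

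One small fix: since $p\ge 1$ need not be an integer, write $\E[|I_t|^{2p}]=\frac{2^p\Gamma(p+\frac12)}{\sqrt{\pi}}\,\nu_\textrm{Z}^{2p}(t)$ instead of using $(2p-1)!!$.
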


\begin{proof}
The Ornstein-Uhlenbeck process, defined via \eqref{eq:OU}, is for all $t>0$ given by
\begin{equation*}
    Z_t=e^{-\theta t}Z_0+\mu\theta \underbrace{\int\limits_{0}^{t} e^{-\theta (t-s)} ds}_{=\frac{1}{\theta}\left(1-e^{-\theta t}\right) \leq \frac{1}{\theta} } +\sigma \int\limits_{0}^{t} e^{-\theta(t-s)} dW_s.
\end{equation*} 
Considering the $2p$-th moment and using that $e^{-x}\leq 1$ for all $x\geq 0$ yields
\begin{equation*}
    \mathbb{E}\left[ \left|Z_t\right|^{2p} \right] \leq C(p)\left\{ \mathbb{E}[|Z_0|]^{2p} +|\mu|^{2p} + \sigma^{2p} \mathbb{E}\left[ \left| \int\limits_{0}^{t} e^{-\theta(t-s)} dW_s \right|^{2p} \right] \right\}.
\end{equation*}
Applying the Burkholder-Davis-Gundy inequality and using again that $e^{-x}\leq 1$ for $x\geq 0$ gives that
\begin{equation*}
    \mathbb{E}\Bigg[ \Bigg| \int\limits_{0}^{t} e^{-\theta(t-s)} dW_s \Bigg|^{2p} \Bigg] \leq C(p) \mathbb{E}\Bigg[ \Bigg( \int\limits_{0}^{t} e^{-2\theta(t-s)} ds \Bigg)^p \Bigg] \leq \frac{C(p)}{(2\theta)^p},
\end{equation*}
which concludes the first result. The second result follows from Jensen's inequality. 
\end{proof}

\begin{lemma}\label{lemma:boundedMomentsDifferencesZ}
    Let the process $Z$ be defined via \eqref{eq:OU}. 
    Then, for all $s,t\geq0$ with $s\leq t$, it holds that
    \begin{equation*}
        \mathbb{E}\left[ \left| Z_t-Z_s \right|^{2} \right] \leq C\left[(t-s)+(t-s)^{2}\right],
    \end{equation*}
    where the constant $C>0$ depends on $\theta,\mu,\sigma,\mathbb{E}[|Z_0|^{2}]$. 
\end{lemma}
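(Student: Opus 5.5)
The plan is to use the explicit representation of the Ornstein--Uhlenbeck process. Write the increment directly from the integral form of \eqref{eq:OU}:
\[
Z_t-Z_s=\theta\int_s^t(\mu-Z_r)\,dr+\sigma(W_t-W_s).
\]
Using $(x+y)^2\leq 2x^2+2y^2$, it suffices to bound the second moment of the drift integral and of the Brownian increment separately.

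For the drift integral, apply H\"older's (Cauchy--Schwarz) inequality and then Fubini's theorem:
\[
\E\Big[\Big|\int_s^t(\mu-Z_r)\,dr\Big|^2\Big]\leq (t-s)\int_s^t\E\big[|\mu-Z_r|^2\big]\,dr\leq 2(t-s)\int_s^t\big(\mu^2+\E[|Z_r|^2]\big)\,dr.
\]
Next, invoke Lemma~\ref{lemma:boundedMomentsZ} with $p=1$, which gives $\E[|Z_r|^2]\leq C(1+\E[|Z_0|^2])$ uniformly in $r>0$. At $r=0$ the bound holds trivially. The drift contribution is therefore at most $C(t-s)^2$, with $C$ depending on $\theta,\mu,\sigma$ and $\E[|Z_0|^2]$.

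For the noise term, use It\^o's isometry, or simply the fact that the Brownian increment is Gaussian, to get $\E[\sigma^2|W_t-W_s|^2]=\sigma^2(t-s)$.

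Combining the two estimates gives
\[
\E[|Z_t-Z_s|^2]\leq C\big[(t-s)+(t-s)^2\big].
\]

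There is no real obstacle here. The only point requiring attention is that the moment bound from Lemma~\ref{lemma:boundedMomentsZ} is uniform in time (independent of $T$). This uniformity is what lets the constant depend only on $\theta,\mu,\sigma$ and $\E[|Z_0|^2]$, as claimed, rather than on $s$ or $t$.
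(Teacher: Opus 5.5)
Your proof is correct, but it decomposes the increment differently from the paper. You start from the integral form of \eqref{eq:OU}, so the increment splits into a drift integral plus the Brownian increment $\sigma(W_t-W_s)$. The drift part then needs Cauchy--Schwarz, Fubini, and the moment bound of Lemma~\ref{lemma:boundedMomentsZ} at every intermediate time $r\in[s,t]$; this is where the $T$-independence of that bound is used.

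The paper instead restarts the explicit variation-of-constants solution at time $s$, writing $Z_t-Z_s=(e^{-\theta(t-s)}-1)Z_s+\mu\theta\int_s^t e^{-\theta(t-r)}dr+\sigma\int_s^t e^{-\theta(t-r)}dW_r$. It bounds the first term via $|e^{-x}-e^{-y}|\le|x-y|$ and the moment bound at the single time $s$, and the stochastic convolution via It\^o's isometry.

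The paper's route exploits the closed-form OU solution and avoids integrating moments over time. Yours uses no explicit solution, so it carries over to any SDE with linear-growth drift and bounded diffusion once a uniform second-moment bound is available. It is essentially how the paper itself proves the $V$-increment estimate in Lemma~\ref{lemma:boundedMomentsDifferencesV}. In both arguments the $(t-s)$ term comes from the noise and the $(t-s)^2$ term from the drift.
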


\begin{proof}
    For the Ornstein-Uhlenbeck process, defined via \eqref{eq:OU}, it holds for all $s,t$ with $s\leq t$ that
    \begin{equation}\label{eq:DiffZ}
        Z_t-Z_s=\left( e^{-\theta(t-s)}-1 \right) Z_s+\mu\theta \int\limits_{s}^{t} e^{-\theta (t-r)}dr+\sigma\int\limits_{s}^{t} e^{-\theta(t-r)} dW_r.
    \end{equation}
    Using that $e^{-x}\leq 1$ for all $x\geq0$, and that $|e^{-x}-e^{-y}|\leq|x-y|$ for all $x,y\geq 0$, yields
    \begin{equation*}
        \mathbb{E}\left[ \left| Z_t-Z_s \right|^{2} \right] \leq C\left\{ \theta^{2}(t-s)^{2} \mathbb{E}\left[ \left| Z_s \right|^{2} \right] + |\mu|^{2}\theta^{2}(t-s)^{2} + \sigma^{2} \mathbb{E}\Bigg[ \Bigg( \int\limits_{s}^{t} e^{-\theta(t-r)} dW_r \Bigg)^2 \Bigg]  \right\}.
    \end{equation*}
    Now, from Lemma \ref{lemma:boundedMomentsZ} we obtain that
        $\mathbb{E}\left[ \left| Z_s \right|^{2} \right] \leq  C(\theta,\mu,\sigma,\mathbb{E}[|Z_0|^{2}])$. 
    Moreover, applying It\^o's isometry  
    and using again that $e^{-x}\leq 1$ for all $x\geq0$, gives that
     \begin{equation*}
       \mathbb{E}\Bigg[ \Bigg( \int\limits_{s}^{t} e^{-\theta(t-r)} dW_r \Bigg)^{2} \Bigg] = \mathbb{E}\Bigg[ \int\limits_{s}^{t} e^{-2\theta(t-r)} dr  \Bigg] =  \int\limits_{s}^{t} e^{-2\theta(t-r)} dr \leq \int\limits_{s}^{t} 1 \ dr = t-s,
    \end{equation*}
    which concludes the statement.
    %
\end{proof}


\end{document}